\newtheorem{observation}{Remark}[section]
\newtheorem{lemma}[observation]{Lemma}  %%share counter with remark
\newtheorem{theorem}[observation]{Theorem}
\newtheorem{definition}[observation]{Definition}
\newtheorem{example}[observation]{Example}
\newtheorem{proposition}[observation]{Proposition} 
\newtheorem{corollary}[observation]{Corollary}
\newdimen\w@dth
\def\setw@dth#1#2{\setbox\z@\hbox{\scriptsize $#1$}\w@dth=\wd\z@
\setbox\@ne\hbox{\scriptsize $#2$}\ifnum\w@dth<\wd\@ne \w@dth=\wd\@ne \fi
\advance\w@dth by 1.2em}
\def\t@^#1_#2{\allowbreak\def\n@one{#1}\def\n@two{#2}\mathrel
{\setw@dth{#1}{#2}
\mathop{\hbox to \w@dth{\rightarrowfill}}\limits
\ifx\n@one\empty\else ^{\box\z@}\fi
\ifx\n@two\empty\else _{\box\@ne}\fi}}
\def\t@@^#1{\@ifnextchar_ {\t@^{#1}}{\t@^{#1}_{}}}
\def\t@left^#1_#2{\def\n@one{#1}\def\n@two{#2}\mathrel{\setw@dth{#1}{#2}
\mathop{\hbox to \w@dth{\leftarrowfill}}\limits
\ifx\n@one\empty\else ^{\box\z@}\fi
\ifx\n@two\empty\else _{\box\@ne}\fi}}
\def\t@@left^#1{\@ifnextchar_ {\t@left^{#1}}{\t@left^{#1}_{}}}
\def\two@^#1_#2{\def\n@one{#1}\def\n@two{#2}\mathrel{\setw@dth{#1}{#2}
\mathop{\vcenter{\hbox to \w@dth{\rightarrowfill}\kern-1.7ex
                 \hbox to \w@dth{\rightarrowfill}}%
       }\limits
\ifx\n@one\empty\else ^{\box\z@}\fi
\ifx\n@two\empty\else _{\box\@ne}\fi}}
\def\tw@@^#1{\@ifnextchar_ {\two@^{#1}}{\two@^{#1}_{}}}
\def\tofr@^#1_#2{\def\n@one{#1}\def\n@two{#2}\mathrel{\setw@dth{#1}{#2}
\mathop{\vcenter{\hbox to \w@dth{\rightarrowfill}\kern-1.7ex
                 \hbox to \w@dth{\leftarrowfill}}%
       }\limits
\ifx\n@one\empty\else ^{\box\z@}\fi
\ifx\n@two\empty\else _{\box\@ne}\fi}}
\def\t@fr@^#1{\@ifnextchar_ {\tofr@^{#1}}{\tofr@^{#1}_{}}}
\newdimen\W@dth
\def\setW@dth#1#2{\setbox\z@\hbox{$#1$}\W@dth=\wd\z@
\setbox\@ne\hbox{$#2$}\ifnum\W@dth<\wd\@ne \W@dth=\wd\@ne \fi
\advance\W@dth by 1.2em}
\def\T@^#1_#2{\allowbreak\def\N@one{#1}\def\N@two{#2}\mathrel
{\setW@dth{#1}{#2}
\mathop{\hbox to \W@dth{\rightarrowfill}}\limits
\ifx\N@one\empty\else ^{\box\z@}\fi
\ifx\N@two\empty\else _{\box\@ne}\fi}}
\def\T@@^#1{\@ifnextchar_ {\T@^{#1}}{\T@^{#1}_{}}}
\def\T@left^#1_#2{\def\N@one{#1}\def\N@two{#2}\mathrel{\setW@dth{#1}{#2}
\mathop{\hbox to \W@dth{\leftarrowfill}}\limits
\ifx\N@one\empty\else ^{\box\z@}\fi
\ifx\N@two\empty\else _{\box\@ne}\fi}}
\def\T@@left^#1{\@ifnextchar_ {\T@left^{#1}}{\T@left^{#1}_{}}}
\def\Tofr@^#1_#2{\def\N@one{#1}\def\N@two{#2}\mathrel{\setW@dth{#1}{#2}
\mathop{\vcenter{\hbox to \W@dth{\rightarrowfill}\kern-1.7ex
                 \hbox to \W@dth{\leftarrowfill}}%
       }\limits
\ifx\N@one\empty\else ^{\box\z@}\fi
\ifx\N@two\empty\else _{\box\@ne}\fi}}
\def\T@fr@^#1{\@ifnextchar_ {\Tofr@^{#1}}{\Tofr@^{#1}_{}}}
\def\Two@^#1_#2{\def\N@one{#1}\def\N@two{#2}\mathrel{\setW@dth{#1}{#2}
\mathop{\vcenter{\hbox to \W@dth{\rightarrowfill}\kern-1.7ex
                 \hbox to \W@dth{\rightarrowfill}}%
       }\limits
\ifx\N@one\empty\else ^{\box\z@}\fi
\ifx\N@two\empty\else _{\box\@ne}\fi}}
\def\Tw@@^#1{\@ifnextchar_ {\Two@^{#1}}{\Two@^{#1}_{}}}
\def\to{\@ifnextchar^ {\t@@}{\t@@^{}}}
\def\from{\@ifnextchar^ {\t@@left}{\t@@left^{}}}
\def\tofro{\@ifnextchar^ {\t@fr@}{\t@fr@^{}}}
\def\To{\@ifnextchar^ {\T@@}{\T@@^{}}}
\def\From{\@ifnextchar^ {\T@@left}{\T@@left^{}}}
\def\Two{\@ifnextchar^ {\Tw@@}{\Tw@@^{}}}
\def\Tofro{\@ifnextchar^ {\T@fr@}{\T@fr@^{}}}
\title{Differential Algebras in Codifferential Categories}
\author{Jean-Simon Pacaud Lemay}
\begin{document}
\allowdisplaybreaks

\maketitle
%\section{}
%\subsection{}

%\tableofcontents
%\newpage

%%%%%%%%%%%%%%%%%%%%%%%%%%%%%%%%%%%%%%%%%%%%%%%%%%%%%%%%%%%%%%%%%%%%%

\begin{abstract} Differential categories were introduced by Blute, Cockett, and Seely as categorical models of differential linear logic and have since lead to abstract formulations of many notions involving differentiation such as the directional derivative, differential forms, smooth manifolds, De Rham cohomology, etc. In this paper we study the generalization of differential algebras to the context of differential categories by introducing $\mathsf{T}$-differential algebras, which can be seen as special cases of Blute, Lucyshyn-Wright, and O'Neill's notion of $\mathsf{T}$-derivations. As such, $\mathsf{T}$-differential algebras are axiomatized by the chain rule and as a consequence we obtain both the higher-order Leibniz rule and the Fa\`a di Bruno formula for the higher-order chain rule. We also construct both free and cofree $\mathsf{T}$-differential algebras for suitable codifferential categories and discuss power series of $\mathsf{T}$-algebras.
\end{abstract}

\subparagraph*{Acknowledgements.} The author would like to thank Kellogg College, the Clarendon Fund, and the Oxford-Google DeepMind Graduate Scholarship for financial support. 

\section{Introduction}

Differential linear logic \cite{ehrhard2003differential, ehrhard2006differential} is an extension of the exponential fragment of linear logic which includes a differentiation inference rule for the exponential modality. Differential categories \cite{blute2006differential} were then introduced to give categorical models of differential linear logic. In particular, a differential category has a natural transformation $\mathsf{d}$, called the {\em deriving transformations} (Definition \ref{diffcatdef}), whose axioms are based on basic properties of the derivative from differential calculus such as the Leibniz rule (also known as the product rule) and the chain rule. Differential category has now been well studied and there are many interesting examples throughout the literature \cite{blute2015cartesian, blute2010convenient, cockett2017there, cockett_lemay_2018, ehrhard2017introduction, fiore2007differential}. Differential categories have also lead to abstract formulations of several notions of differentiation such as, to list a few, the directional derivative \cite{blute2009cartesian}, K{\"a}hler differentials \cite{blute2011kahler, blute2015derivations}, differential forms \cite{cruttwell2013forms}, smooth manifolds \cite{cockett2014differential}, and De Rham cohomology \cite{Cruttwell2018, o2017smoothness}. Therefore, if the theory of differential categories wishes to champion itself as the axiomatization of the fundamentals of differentiation: differential algebras should fit naturally in this story. 

The theory of differential algebra \cite{kaplansky1957introduction, kolchin1973differential, ritt1950differential} studies algebraic objects (such as rings, algebras, field, etc.) equipped with a {\em derivation}. Briefly, for a commutative ring $R$, a differential algebra is a commutative $R$-algebra $A$ equipped with an $R$-linear endomorphism $\mathsf{D}: A \to A$ which satisfies the Leibniz rule $\mathsf{D}(ab)=a\mathsf{D}(b) + \mathsf{D}(a)b$ (see Section \ref{diffalgsec}). There are both free differential algebras \cite{guo2008differential, kolchin1973differential} and cofree differential algebras \cite{guo2008differential, keigher1975adjunctions}, the latter of which are also known as Hurwitz series rings \cite{keigher1997ring}. Due to the endomorphism nature of the derivation, one may also consider higher order derivatives to obtain a higher-order Leibniz rule and study linear differential equations \cite{keigher2000hurwitz, van2012galois}.

Originally it was hoped that the monad induced by free differential algebras would provide a differential category structure on the opposite category of $R$-modules. However, quite the opposite turned out to be true. Indeed, it was shown in \cite[Theorem 8.6]{blute2018differential} that free differential algebras do {\em NOT} provide such a differential category (unless $R$ is trivial). In order to properly introduce differential algebras to the story of differential categories, we must consider derivations evaluated in modules instead. A derivation of a commutative $R$-algebra $A$ evaluated in an $A$-module $M$ \cite{loday2013cyclic} is an $R$-linear map ${\mathsf{D}: A \to M}$ satisfying an appropriate version of the Leibniz rule. These types of derivations are particularly important in the study of differential forms and tangent spaces \cite{lee2009manifolds} and (algebraic) De Rham cohomology \cite{weibel1995introduction}. As every algebra can be seen as a module over itself, differential algebras can be equivalently defined as algebras with derivations evaluated in themselves. 

Derivations valued in modules over a commutative algebra have already been generalized to the context of a codifferential category (the dual of a differential category) by Blute, Lucyshyn-Wright, and O'Neill with the notion of $\mathsf{T}$-derivation \cite{blute2015derivations} (where $\mathsf{T}$ is the monad of a codifferential category). Briefly, every $\mathsf{T}$-algebra \cite{mac2013categories} (also known as an Eilenberg-Moore algebra) in a codifferential category comes equipped with a commutative monoid structure (see (\ref{Talgmon})) and therefore we can consider modules over them. A $\mathsf{T}$-derivation \cite[Definition 4.12]{blute2015derivations} of a $\mathsf{T}$-algebra $A$ evaluated in one its modules $M$ is a map ${\mathsf{D}: A \to M}$ which in this setting satisfies a {\em chain rule} like identity. Similarly to the classical case, a $\mathsf{T}$-differential algebra in a codifferential category will be a $\mathsf{T}$-algebra $A$ equipped with a $\mathsf{T}$-derivations of type $\mathsf{D}: A \to A$. $\mathsf{T}$-differential algebras are the appropriate generalization of differential algebras in codifferential categories.

\subsection*{Main Definitions and Results}

The main concept of study in this paper is that of $\mathsf{T}$-differential algebras for codifferential categories which are defined in Definition \ref{Tdiffalgdef} and whose basic properties are studied in Section \ref{Tdiffalgsec}. In particular, Proposition \ref{Tdalgprop} states that every $\mathsf{T}$-differential algebra is a differential algebra in the classical sense and therefore satisfies a higher-order Leibniz rule. Furthermore, $\mathsf{T}$-differential algebras also satisfy a Fa\`a di Bruno formula for a higher-order chain rule (\ref{der4}). Proposition \ref{mapTalg} and Corollary \ref{endoTalg} address $\mathsf{T}$-derivations of free $\mathsf{T}$-algebras and provide some examples of $\mathsf{T}$-differential algebras which all codifferential categories have. In Section \ref{exsec} we discuss three examples of codifferential categories and their $\mathsf{T}$-differential algebras. 

Section \ref{freesec} and Section \ref{cofreesec} are dedicated to constructing free and cofree $\mathsf{T}$-differential algebras respectively in suitable codifferential categories. Both constructions are analogous to the classical constructions of (co)free differential algebras found in \cite{guo2008differential}. Theorem \ref{freethm} states that for a codifferential category with countable coproducts, its category of $\mathsf{T}$-differential algebras is monadic over the codifferential category. While Theorem \ref{cofreethm} states that for a codifferential category with countable products, the category of $\mathsf{T}$-differential algebras is comonadic over the category of $\mathsf{T}$-algebras. The difficulty in the construction of cofree $\mathsf{T}$-differential algebras is providing the appropriate $\mathsf{T}$-algebra structure (Proposition \ref{omegatalg}) such that its induced commutative monoid structure is precisely that of the Hurwitz series ring (Lemma \ref{hurwitzmult}). Power series over $\mathsf{T}$-algebras are also discussed in Section \ref{cofreesec} and in particular for codifferential categories enriched over $\mathbb{Q}_{\geq 0}$-modules, power series over $\mathsf{T}$-algebras are isomorphic to cofree $\mathsf{T}$-differential algebras (Proposition \ref{powerH}). 

This paper concludes with a discussion on potential future work on $\mathsf{T}$-differential algebras including linear differential equations for $\mathsf{T}$-differential algebras, weighted $\mathsf{T}$-differential algebras, and generalizing Rota-Baxter algebras \cite{guo2012introduction} (the integration counterpart of differential algebras). 

\subparagraph*{Conventions:} In this paper we use diagrammatic order for composition which means that the composite map $f;g$ is the map which first does $f$ then $g$. For simplicity, we allow ourselves to work in a symmetric \emph{strict} monoidal category \cite{mac2013categories}, that is, we will suppress the unit and associativity isomorphisms. For a symmetric monoidal category we use $\otimes$ for the tensor product, $K$ for the monoidal unit, and ${\sigma: A \otimes B \to B \otimes A}$ for the symmetry isomorphism. 

\section{Differential Algebras}\label{diffalgsec}

It may be useful for the reader to review the definition of a differential algebra in the classical sense and to consider some basic examples. In Section \ref{Tdiffalgsec} we will generalize the definition and properties of differential algebras to the context of codifferential categories. For more details about the theory of differential algebras, their properties, and their applications throughout various fields, we invite the curious reader to see \cite{crespo2011algebraic, kaplansky1957introduction, kolchin1973differential, van2012galois, ritt1950differential}. 

Throughout this section, let $R$ be a commutative ring. 

\begin{definition}\label{classicaldef} An \textbf{$R$-differential algebra} (of weight $0$) is a commutative $R$-algebra $A$ equipped with a \textbf{derivation}, that is, an $R$-linear morphism $\mathsf{D}: A \to A$ which satisfies the \textbf{Leibniz rule}: 
\[\mathsf{D}(ab)=a\mathsf{D}(b) + \mathsf{D}(a)b\] \end{definition}

\begin{example} \normalfont Trivially, any commutative $R$-algebra $A$ is an $R$-differential algebra with derivation taken to be the zero map $0: A \to A$. 
\end{example}

\begin{example} \normalfont For any commutative $R$-algebra $A$, the polynomial ring in one variable $A[x]$ is an $R$-differential algebra with derivation $\mathsf{D}$ defined as the standard differentiation of polynomials:
\begin{equation}\label{classicalpoly}\begin{gathered} \mathsf{D}\left( \sum\limits^n_{k=0} a_k x^k\right) = \sum\limits^n_{k=1} a_k n x^{k-1}  \end{gathered}\end{equation}
Similarly, the power series ring $A\llbracket x \rrbracket$ is an $R$-differential algebra by extending the definition of the derivation $\mathsf{D}$ to formal power series: 
\begin{equation}\label{classicalpower}\begin{gathered} \mathsf{D}\left( \sum\limits^\infty_{k=0} a_k x^k\right) = \sum\limits^\infty_{k=1} a_k n x^{k-1} \end{gathered}\end{equation}
In Section \ref{cofreesec} we generalize the notion of power series for codifferential categories with countable products. 
\end{example}

\begin{example}\label{cinfex} \normalfont Derivations in this differential algebra sense provide an alternative algebraic description of vector fields. Let $M$ be a smooth real manifold and consider the commutative $\mathbb{R}$-algebra $\mathcal{C}^\infty(M)= \lbrace f: M \to \mathbb{R} \vert~ f \text{ smooth} \rbrace$. Then $\mathbb{R}$-derivations of $\mathcal{C}^\infty(M)$ are in bijective correspondence with smooth vector fields of $M$ \cite[Theorem 2.72]{lee2009manifolds}. Briefly, for every $\mathbb{R}$-derivation $\mathsf{D}: \mathcal{C}^\infty(M) \to \mathcal{C}^\infty(M)$ there exists a unique smooth vector field $\mathsf{v}: M \to \mathcal{T}(M)$, where $\mathcal{T}(M)$ is the tangent bundle of $M$, such that for every smooth map $f \in \mathcal{C}^\infty(M)$, $\mathsf{D}(f)$ is defined as follows: 
\begin{equation}\label{vecdiff}\begin{gathered}\mathsf{D}(f) := \xymatrixcolsep{5pc}\xymatrix{M \ar[r]^-{\mathsf{v}}& \mathcal{T}(M) \ar[r]^-{\mathcal{T}(f)} & \mathcal{T}(\mathbb{R}) \cong \mathbb{R} \times \mathbb{R} \ar[r]^-{\pi_1} & \mathbb{R}  
  } \end{gathered}\end{equation}
 where $\pi_1: \mathbb{R} \times \mathbb{R} \to \mathbb{R}$ is the \emph{second} projection and $\mathcal{T}(f)$ is the tangent map of $f$ (as defined in \cite[Definition 2.50]{lee2009manifolds}). \end{example}

\begin{example}\label{freediff} \normalfont For an $R$-module $M$ there exists an $R$-differential algebra $\mathsf{Diff}(M)$ which is the free $R$-differential algebra over $M$ \cite{guo2008differential, kolchin1973differential}. If $M$ is a free $R$-module with basis set $X= \lbrace x_1, x_2, \hdots \rbrace$, then $\mathsf{Diff}(M) \cong R[X \times \mathbb{N}]$ (the polynomial ring over the set $X \times \mathbb{N})$ with derivation $\mathsf{D}$ defined on monomials as follows:
\begin{equation}\label{}\begin{gathered} \mathsf{D}\left( (x_1, n_1) \hdots (x_m, n_m) \right) = \sum\limits^m_{k=1} (x_1,n_1) \hdots (x_k, n_k +1) \hdots (x_n, n_m) \end{gathered}\end{equation}
For arbitrary $M$, an explicit description of $\mathsf{Diff}(M)$ can be found in \cite[Example 8.3]{blute2018differential}. In Section \ref{freesec} we generalize this construction for codifferential categories with countable coproducts. 
\end{example}

\begin{example}\label{cofreediff} \normalfont For a commutative $R$-algebra $A$ there exists an $R$-differential algebra $\mathsf{H}(A)$, known as the \textbf{Hurwitz series algebra} over $A$ \cite{keigher1997ring}, which is the cofree $R$-differential algebra over $A$ \cite{guo2008differential, keigher1975adjunctions}. Explicitly, the underlying $R$-module of $\mathsf{H}(A)$ is the set of functions $f: \mathbb{N} \to A$ with point-wise addition and scalar multiplication, but whose commutative algebra structure is defined as follows: 
\begin{equation}\label{hurwtizproduct}\begin{gathered} (f \cdot g)(n) := \sum\limits^n_{k=0} \binom{n}{k} f(k)g(n-k) \quad \quad \mathsf{u}(n) := \begin{cases} 1 & \text{when $n=0$} \\
0 & o.w.\end{cases} \end{gathered}\end{equation}
The derivation $\mathsf{D}$ is simply shifting by $1$:
\begin{equation}\label{}\begin{gathered} \mathsf{D}(f)(n) := f(n+1) \end{gathered}\end{equation}
In particular, when $\mathbb{Q} \subset R$, there is an isomorphism of $R$-differential algebras $\mathsf{H}(A) \cong A\llbracket x \rrbracket$. In Section \ref{cofreesec} we generalize Hurwitz series algebras for codifferential categories with countable products. 
\end{example}

Two important generalizations of differential algebras are derivations evaluated in (bi)modules \cite{loday2013cyclic} and weighted differential algebras \cite{guo2008differential}. The former was discussed in the introduction, while the later will be discussed briefly in the conclusion. 

\section{Algebra Modalities}
In this section we review the notion of algebra modalities, a key ingredient for codifferential category structure. If only to introduce notation, we first quickly review the definitions of monads, algebras of monads, and commutative monoids. For a more thorough introductions to these concepts, we refer the reader to \cite{mac2013categories}.

\begin{definition} A \textbf{monad} on a category $\mathbb{X}$ is a triple $(\mathsf{T}, \mu, \eta)$ consisting of an functor ${\mathsf{T}: \mathbb{X} \to \mathbb{X}}$ and two natural transformations $\mu: \mathsf{T}\mathsf{T}A \to \mathsf{T}A$ and $\eta: A \to \mathsf{T}A$ such that the following diagrams commute:
      \begin{equation}\label{monadeq}\begin{gathered} \xymatrixcolsep{5pc}\xymatrix{
        \mathsf{T}  A  \ar[r]^-{\mathsf{T}(\eta)} \ar[d]_-{\eta} \ar@{=}[dr]^-{}& \mathsf{T} \mathsf{T} A \ar[d]^-{\mu}  & \mathsf{T} \mathsf{T} \mathsf{T} A  \ar[r]^-{\mu} \ar[d]_-{\mathsf{T}(\mu)} & \mathsf{T} \mathsf{T} A  \ar[d]^-{\mu}\\
        \mathsf{T} \mathsf{T} A \ar[r]_-{\mu} & \mathsf{T} A  & \mathsf{T} \mathsf{T} A \ar[r]_-{\mu} & \mathsf{T} A}\end{gathered}\end{equation} 
A \textbf{$\mathsf{T}$-algebra} is a pair $(A, \nu)$ consisting of an object $A$ and a map $\nu: \mathsf{T} A \to A$ of $\mathbb{X}$ such that the following diagrams commute:
\begin{equation}\label{Talgdef}\begin{gathered} \xymatrixcolsep{5pc}\xymatrix{
        A  \ar[r]^-{\eta} \ar@{=}[dr]^-{}& \mathsf{T} A \ar[d]^-{\nu}  &  \mathsf{T} \mathsf{T} A  \ar[r]^-{\mu} \ar[d]_-{\mathsf{T}(\nu)} & \mathsf{T} A  \ar[d]^-{\nu}\\
        &  \mathsf{T} A  & \mathsf{T} A \ar[r]_-{\nu} & A}\end{gathered}\end{equation}
A \textbf{$\mathsf{T}$-algebra morphism} $f: (A, \nu)\to (B, \nu^\prime)$ is a map $f: A\to B$ of $\mathbb{X}$ such that the following diagrams commute: 
\begin{equation}\begin{gathered} \xymatrixcolsep{5pc}\xymatrix{
       \mathsf{T} A  \ar[r]^-{\nu} \ar[d]_-{\mathsf{T}(f)} & A \ar[d]^-{f} \\
        \mathsf{T} B \ar[r]_-{\nu^\prime} & B} \end{gathered}\end{equation}     
The category of $\mathsf{T}$-algebras and $\mathsf{T}$-algebra morphisms is denoted $\mathbb{X}^\mathsf{T}$ and is known as the \textbf{Eilenberg-Moore category of algebras of the monad} $(\mathsf{T}, \mu, \eta)$. 
\end{definition}

$\mathsf{T}$-algebras of the form $(\mathsf{T}A, \mu)$ are known as \textbf{free $\mathsf{T}$-algebras}. 

\begin{definition} In a symmetric monoidal category, a \textbf{commutative monoid} is a triple $(A, \mathsf{m}, \mathsf{u})$ consisting of an object $A$, a map ${\mathsf{m}: A \otimes A\to A}$ called the multiplication, and a map $\mathsf{u}: K\to A$ called the unit, such that the following diagrams commute: 
      \begin{equation}\label{monoideq}\begin{gathered} \xymatrixcolsep{3pc}\xymatrix{
        A  \ar[r]^-{\mathsf{u} \otimes 1} \ar[d]_-{1 \otimes \mathsf{u}} \ar@{=}[dr]^-{}& A \otimes A \ar[d]^-{\mathsf{m}}  & A \otimes A \otimes A \ar[r]^-{\mathsf{m} \otimes 1} \ar[d]_-{1 \otimes \mathsf{m}} & A \otimes A  \ar[d]^-{\mathsf{m}} & A \otimes A \ar[dr]_-{\mathsf{m}}\ar[r]^-{\sigma} & A \otimes A \ar[d]^-{\mathsf{m}}\\
        A \otimes A \ar[r]_-{\mathsf{m}} & A  & A \otimes A \ar[r]_-{\mathsf{m}} & A & & A}\end{gathered}\end{equation}
A \textbf{monoid morphism}  $f: (A, \mathsf{m}, \mathsf{u}) \to (B, \mathsf{m}^\prime, \mathsf{u}^\prime)$ is a map $f: A \to B$ such that the following diagrams commute: 
\begin{equation}\label{monmorph}\begin{gathered} \xymatrixcolsep{5pc}\xymatrix{A \otimes A \ar[d]_-{\mathsf{m}} \ar[r]^-{f \otimes f} & B \otimes B \ar[d]^-{\mathsf{m}^\prime} & K \ar[dr]_-{\mathsf{u}^\prime}\ar[r]^-{\mathsf{u}} & A \ar[d]^-{f} \\
A \ar[r]_-{f} & B && B 
  } \end{gathered}\end{equation}  
\end{definition}

Algebra modalities are monads such that every free $\mathsf{T}$-algebra comes equipped with a commutative monoid structure. 

\begin{definition}\label{algmoddef} An \textbf{algebra modality} \cite{blute2006differential, blute2015derivations} on a symmetric monoidal category is a quintuple $(\mathsf{T}, \mu, \eta, \mathsf{m}, \mathsf{u})$ consisting of a monad $(\mathsf{T}, \mu, \eta)$, a natural transformation ${\mathsf{m}: \mathsf{T}(A) \otimes \mathsf{T}(A) \to \mathsf{T}(A)}$ and a natural transformation $\mathsf{u}: K \to \mathsf{T}(A)$, such that for each object $A$, the triple $(\mathsf{T}(A), \mathsf{m}, \mathsf{u})$ is a commutative monoid and $\mu: (\mathsf{T}^2(A), \mathsf{m}, \mathsf{u}) \to (\mathsf{T}(A), \mathsf{m}, \mathsf{u})$ is a monoid morphism. 
\end{definition}

Notice that for an algebra modality, requiring that $\mathsf{m}$ and $\mathsf{u}$ be natural transformations is equivalent to asking that for each map $f: A \to B$, the map $\mathsf{T}(f): \mathsf{T}(A) \to \mathsf{T}(B)$ is a monoid morphism. We discuss some examples of algebra modalities in Section \ref{exsec}, while many other examples can be found in \cite{blute2006differential, cockett_lemay_2018}. 

Every $\mathsf{T}$-algebra $(A, \nu)$ of an algebra modality comes equipped with a commutative monoid structure \cite[Theorem 2.12]{blute2015derivations} where the multiplication $\mathsf{m}^\nu: A \otimes A \to A$ and the unit $\mathsf{u}^\nu: K \to A$ are defined as follows: 
\begin{equation}\label{Talgmon}\begin{gathered} \mathsf{m}^\nu := \xymatrixcolsep{5pc}\xymatrix{A \otimes A \ar[r]^-{\eta \otimes \eta} & \mathsf{T}(A) \otimes \mathsf{T}(A) \ar[r]^-{\mathsf{m}} & \mathsf{T}(A) \ar[r]^-{\nu} & A}  \end{gathered}\end{equation} 
  \begin{equation}\label{Talgmonu}\begin{gathered} \mathsf{u}^\nu :=\xymatrixcolsep{5pc}\xymatrix{K \ar[r]^-{\mathsf{u}} & \mathsf{T}(A) \ar[r]^-{\nu} & A
  } \end{gathered}\end{equation} 
Notice that since $\mu$ is a monoid morphism, when applying this construction to a free $\mathsf{T}$-algebra $(\mathsf{T}(A), \mu)$ we recover $\mathsf{m}$ and $\mathsf{u}$, that is, $\mathsf{m}^\mu=\mathsf{m}$ and $\mathsf{u}^\nu=\mathsf{u}$. Furthermore by naturality of $\mathsf{m}$ and $\mathsf{u}$, every $\mathsf{T}$-algebra morphism becomes a monoid morphism on the induced monoid structures. In particular for every $\mathsf{T}$-algebra $(A, \nu)$, since $\nu: (A, \nu) \to (\mathsf{T}A, \mu)$ is a $\mathsf{T}$-algebra morphism, we have that $\nu: (\mathsf{T}(A), \mathsf{m}, \mathsf{u}) \to (A, \mathsf{m}^\nu, \mathsf{u}^\nu)$ is a monoid morphism. 

We finish this section with a useful natural transformation for algebra modalities. 

\begin{definition}\label{dcircdef} For an algebra modality $(\mathsf{T}, \mu, \eta, \mathsf{m}, \mathsf{u})$, its \textbf{coderiving transformaiton} \cite{cockett_lemay_2018} is the natural transformation $\mathsf{d}^\circ: \mathsf{T}(A) \otimes A \to \mathsf{T}(A)$ defined as follows: 
\begin{equation}\label{coderive}\begin{gathered} \xymatrixcolsep{5pc}\xymatrix{ \mathsf{T}(A) \otimes A \ar[r]^-{1 \otimes \eta} & \mathsf{T}(A) \otimes \mathsf{T}(A) \ar[r]^-{\mathsf{m}} & \mathsf{T}(A) 
  } \end{gathered}\end{equation}
\end{definition}

\noindent For a list of identities which the coderiving transformation satisfies see \cite[Proposition 2.1]{cockett_lemay_2018}.

\section{Codifferential Categories}

In this section we review differential categories \cite{blute2006differential}, or rather the dual notion of \textbf{codifferential categories} following \cite{blute2015derivations}. Differential categories were introduced for linear logic and therefore study coalgebra modalities (the dual notion of algebra modalities). We have chosen to work with codifferential categories as differential algebras are more natural in this setting. Of course by dualizing all the definitions and results, one could also study the dual notion of differential algebras in differential categories. 

We begin by recalling the definition of an additive symmetric monoidal category. In order to stay consistent with the terminology used in the differential category literature, here we mean ``additive'' in the Blute, Cockett, and Seely sense of the term \cite{blute2006differential}, that is, to mean enriched over commutative monoids. In particular, we do not assume negatives nor do we assume biproducts, which differs from other definitions of an additive category found in the literature \cite{mac2013categories}. 

\begin{definition} An \textbf{additive category} is a commutative monoid enriched category, that is a category in which each hom-set 
is a commutative monoid, with addition operation $+$ and zero  $0$, and in which composition preserves the additive structure, that is:
\begin{equation}\label{}\begin{gathered} k;(f\!+\!g);h\!=k;f;h\!+\!k;g; \quad \quad 0;f=0=f;0 
 \end{gathered}\end{equation}
\noindent An \textbf{additive symmetric monoidal category} is an additive category with a tensor product which is compatible with the additive structure in the sense that:
\begin{equation}\label{}\begin{gathered} k \otimes (f\!+\!g)\otimes h\!= \!k \otimes f\otimes h \!+ \!k \otimes g\otimes h \quad \quad 0\otimes h\!=\!0\end{gathered}\end{equation}
\end{definition}

In any additive category there is a notion of ``scalar multiplication'' of maps by the natural numbers $\mathbb{N}$. The scalar multiplication of a map $f: A \to B$ by $n \in \mathbb{N}$, is the map $n \cdot f: A \to B$ defined by summing $n$ copies of $f$ together, that is, $n \cdot f = f + f + \hdots + f$. If $n=0$, then $0 \cdot f =0$. Furthermore, for additive symmetric monoidal categories, one then has that $(n \cdot f ) \otimes g= n \cdot (f \otimes g)=f \otimes (n \cdot g)$. Later on we will be scalar multiplying by binomial coefficients for the higher order Leibniz rule and chain rule (Proposition \ref{Tdalgprop}) and also in constructing cofree $\mathsf{T}$-differential algebras (Section \ref{cofreesec}). 

\begin{definition}\label{diffcatdef} A \textbf{codifferential category} \cite{blute2015derivations, blute2006differential} is an additive symmetric monoidal category with an algebra modality $(\mathsf{T}, \mu, \eta, \mathsf{m}, \mathsf{u})$ which comes equipped with a \textbf{deriving transformation}, that is, a natural transformation $\mathsf{d}: \mathsf{T}(A) \to \mathsf{T}(A) \otimes A$ such that the following equalities hold: 
\begin{enumerate}[{\bf [d.1]}]
\item Constant Rule: $\mathsf{u};\mathsf{d} = 0$
\item Leibniz Rule: $\mathsf{m}; \mathsf{d} = (\mathsf{d} \otimes 1);(1 \otimes \sigma);(\mathsf{m} \otimes 1) + (1 \otimes \mathsf{d});(\mathsf{m} \otimes 1) $
\item Linear Rule: $\eta; \mathsf{d} = \mathsf{u} \otimes 1$
\item Chain Rule: $\mu; \mathsf{d}= \mathsf{d}; (\mu \otimes \mathsf{d}); (\mathsf{m} \otimes 1)$
\item Interchange Rule: $\mathsf{d}; (\mathsf{d}\otimes 1) = \mathsf{d}; (\mathsf{d}\otimes 1); (1 \otimes \sigma)$
\end{enumerate}
\end{definition}

The general intuition for codifferential categories is probably best understood by the studying Example \ref{symex} which arises from differentiating polynomials. Certain other examples of codifferential categories are also discussed in Section \ref{exsec}, while many other examples of (co)differential categories can also be found in \cite{blute2018differential, blute2006differential, cockett_lemay_2018, ehrhard2017introduction}. For an in-depth discussion on the interpretation of these axioms and (co)differential categories in general, we refer to reader to the original paper on differential categories \cite{blute2006differential}. It should be noted that the interchange rule {\bf [d.5]} was not part of the definition in \cite{blute2006differential} but was later added to ensure that the coKleisli category of a differential category was a Cartesian differential category \cite{blute2009cartesian}. 

\section{$\mathsf{T}$-Differential Algebras}\label{Tdiffalgsec}

In this section we introduce $\mathsf{T}$-differential algebras which generalizes differential algebras for codifferential categories. As discussed in the introduction, $\mathsf{T}$-differential algebras are special cases of Blute, Lucyshyn-Wright, and O'Neill's notion of $\mathsf{T}$-derivations \cite{blute2015derivations}. 

\begin{definition}\label{Tdiffalgdef} In a codifferential category with algebra modality $(\mathsf{T}, \mu, \eta, \mathsf{m}, \mathsf{u})$ and deriving transformation $\mathsf{d}$, a \textbf{$\mathsf{T}$-differential algebra} is a triple $(A, \nu, \mathsf{D})$ consisting of a $\mathsf{T}$-algebra $(A, \nu)$ and a map $\mathsf{D}: A \to A$ such that the following diagram commutes:
\begin{equation}\label{Tdiffalg}\begin{gathered} \xymatrixcolsep{5pc}\xymatrix{\mathsf{T}(A) \ar[d]_-{\mathsf{d}} \ar[rr]^-{\nu} && A \ar[d]^-{\mathsf{D}} \\
  \mathsf{T}(A)\otimes A \ar[r]_-{\nu \otimes \mathsf{D}} & A \otimes A \ar[r]_-{\mathsf{m}^\nu} & A  
  }  \end{gathered}\end{equation}
Equivalently, $\mathsf{D}: A \to A$ is a $\mathsf{T}$-derivation in the sense of \cite[Definition 4.12]{blute2015derivations}. 
\end{definition}

The intuition here is that $\mathsf{T}$-differential algebras are axiomatized by a chain rule, involving the $\mathsf{T}$-algebra structure $\nu$ rather than simply being axiomatized by the Leibniz rule. However, every $\mathsf{T}$-differential algebra is a differential algebra in the classical sense, that is, $\mathsf{D}$ satisfies the Leibniz rule (\ref{der2}) and the constant rule (\ref{der1}). $\mathsf{T}$-differential algebras also have a notion of higher order derivatives, which is not necessarily the case for arbitrary $\mathsf{T}$-derivations $\mathsf{D}: A \to M$ in the sense of \cite{blute2015derivations}. Explicitly, we have for each $n \in \mathbb{N}$ the map $\mathsf{D}^n : A \to A$ which applies the derivative $n$ times, $\mathsf{D}^n := \mathsf{D}; \hdots; \mathsf{D}$, and where by convention $\mathsf{D}^0=1_A$. We obtain both a higher-order Leibniz rule (\ref{der3}) and a Fa\`a di Bruno formula (\ref{der4}) for the higher-order chain rule for $\mathsf{T}$-differential algebras. 

\begin{proposition}\label{Tdalgprop} Let $(A, \nu, \mathsf{D})$ be a $\mathsf{T}$-differential algebra. Then $\mathsf{D}$ satisfies the following identities: 
\begin{enumerate}[{\em (i)}]
\item Constant Rule:
\begin{equation}\label{der1}\begin{gathered} \mathsf{u}^\nu; \mathsf{D}= 0
 \end{gathered}\end{equation}
\item Leibniz Rule: 
\begin{equation}\label{der2}\begin{gathered} \mathsf{m}^\nu; \mathsf{D}= (1 \otimes \mathsf{D}); \mathsf{m}^\nu +  (\mathsf{D} \otimes 1); \mathsf{m}^\nu
 \end{gathered}\end{equation}
 \item Higher-Order Leibniz Rule (for all $n \in \mathbb{N}$): 
\begin{equation}\label{der3}\begin{gathered} \mathsf{m}^\nu; \mathsf{D}^{n} =  \sum\limits^{n}_{k=0} \binom{n}{k} \cdot (\mathsf{D}^k \otimes \mathsf{D}^{n-k}); \mathsf{m}^\nu
 \end{gathered}\end{equation}
\item Fa\`a di Bruno Rule  (for all $n \in \mathbb{N}$): 
\begin{equation}\label{der4}\begin{gathered} \nu; \mathsf{D}^{n+1} =  \sum\limits^{n}_{k=0} \binom{n}{k} \cdot   \mathsf{d}; (\nu \otimes 1); (\mathsf{D}^k \otimes \mathsf{D}^{n-k+1}); \mathsf{m}^\nu \end{gathered}\end{equation}
\end{enumerate}
\end{proposition}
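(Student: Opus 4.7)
The plan is to derive (i) and (ii) directly from the defining chain-rule equation~\eqref{Tdiffalg} together with the deriving-transformation axioms, and then bootstrap (iii) and (iv) by induction on $n$ using (ii) and Pascal's identity.

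For (i) I would expand $\mathsf{u}^\nu = \mathsf{u};\nu$ by~\eqref{Talgmonu}, rewrite $\nu;\mathsf{D}$ via~\eqref{Tdiffalg}, and observe that the composite then begins with $\mathsf{u};\mathsf{d}$, which is zero by {\bf [d.1]}. For (ii) I would similarly expand $\mathsf{m}^\nu=(\eta\otimes\eta);\mathsf{m};\nu$ from~\eqref{Talgmon} and replace $\nu;\mathsf{D}$ via~\eqref{Tdiffalg}. The Leibniz axiom {\bf [d.2]} then splits the composite into two summands. In each summand the factor $\eta;\mathsf{d}$ occurs on one tensor slot and can be rewritten as $\mathsf{u}\otimes 1$ by {\bf [d.3]}; the $\mathsf{u}$ is then absorbed by $\mathsf{m}$ via the monoid unit law, while the remaining $\eta;\nu$ collapses to the identity by the $\mathsf{T}$-algebra unit axiom. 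This leaves $(1\otimes\mathsf{D});\mathsf{m}^\nu$ on one side and $\sigma;(\mathsf{D}\otimes 1);\mathsf{m}^\nu$ on the other, and commutativity of $\mathsf{m}^\nu$ absorbs the symmetry, yielding the Leibniz rule.

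For (iii) I would induct on $n$, with trivial base case $n=0$. In the step I write $\mathsf{m}^\nu;\mathsf{D}^{n+1}=(\mathsf{m}^\nu;\mathsf{D}^n);\mathsf{D}$, apply the inductive hypothesis, and distribute the trailing $\mathsf{D}$ across $\mathsf{m}^\nu$ using (ii); reindexing one of the two resulting sums and invoking Pascal's identity $\binom{n}{k}+\binom{n}{k-1}=\binom{n+1}{k}$ assembles the stated formula. For (iv) I again induct on $n$, with base case $n=0$ being exactly~\eqref{Tdiffalg}. The step proceeds analogously: write $\nu;\mathsf{D}^{n+2}=(\nu;\mathsf{D}^{n+1});\mathsf{D}$, expand via the inductive hypothesis, use (ii) to push the trailing $\mathsf{D}$ across the final $\mathsf{m}^\nu$ in each summand, and reassemble by the same reindex-plus-Pascal argument.

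I expect (ii) to be the main obstacle, since it is the step where the chain-rule definition, the axioms {\bf [d.2]} and {\bf [d.3]}, the $\mathsf{T}$-algebra unit law, and the commutativity of the induced monoid $(\mathsf{m}^\nu,\mathsf{u}^\nu)$ must all be orchestrated in a single rewrite chain to recover the ordinary Leibniz rule from the chain-rule formulation. Once (ii) is in hand, the inductive arguments for (iii) and (iv) introduce no new ingredients beyond the standard binomial bookkeeping, and (i) is immediate.
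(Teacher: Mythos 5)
Your proposal is correct and follows essentially the same route as the paper: (i) is the one-line computation via {\bf [d.1]}; (ii) is obtained by expanding $\mathsf{m}^\nu=(\eta\otimes\eta);\mathsf{m};\nu$, substituting the defining chain rule~\eqref{Tdiffalg}, and simplifying with {\bf [d.2]}, {\bf [d.3]}, the unit laws, and commutativity of $\mathsf{m}^\nu$ (the paper merely isolates the intermediate identity $(\eta\otimes\eta);\mathsf{m};\mathsf{d}=(\eta\otimes 1)+(1\otimes\eta);\sigma$ as a displayed step, which you inline); and (iii), (iv) are the same inductions using (ii), reindexing, and Pascal's identity. No gaps.
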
 
\begin{proof} The constant rule and the Leibniz rule follow mostly from the axioms of the same name for the deriving transformation.  \\Ê\\
{\em (i) Constant Rule}: 
\begin{align*}
\mathsf{u}^\nu; \mathsf{D} = \mathsf{u}; \nu; \mathsf{D} = \mathsf{u};  \mathsf{d}; (\nu \otimes \mathsf{D}); \mathsf{m}^\nu = 0; (\nu \otimes \mathsf{D}); \mathsf{m}^\nu= 0 
\end{align*}
{\em (ii) Leibniz Rule}: We first show that the deriving transformation satisfies the following identity
\begin{equation}\label{dpoly}\begin{gathered} 
(\eta \otimes \eta);\mathsf{m}; \mathsf{d} = (\eta \otimes 1) + (1 \otimes \eta);\sigma
\end{gathered}\end{equation}
by using both the Leibniz rule \textbf{[d.2]} and the Linear rule \textbf{[d.3]}. 
\begin{align*}
(\eta \otimes \eta);\mathsf{m}; \mathsf{d} &=~ (\eta \otimes \eta);\left( (\mathsf{d} \otimes 1);(1 \otimes \sigma);(\mathsf{m} \otimes 1) + (1 \otimes \mathsf{d});(\mathsf{m} \otimes 1) \right) \tag{Leibniz Rule \textbf{[d.2]}} \\
&=~ (\eta \otimes \eta);(\mathsf{d} \otimes 1);(1 \otimes \sigma);(\mathsf{m} \otimes 1) + (\eta \otimes \eta);(1 \otimes \mathsf{d});(\mathsf{m} \otimes 1) \\
&=~ (1 \otimes \eta);(\mathsf{u} \otimes 1 \otimes 1);(1 \otimes \sigma);(\mathsf{m} \otimes 1) + (1 \otimes \eta);(1 \otimes \mathsf{u} \otimes 1);(\mathsf{m} \otimes 1) \tag{Linear Rule \textbf{[d.3]}}  \\
&=~ (\eta \otimes 1) + (1 \otimes \eta);\sigma \tag{\ref{monadeq}} \\
\end{align*}
Now we can show that $\mathsf{D}$ satisfies the Leibniz rule: 
\begin{align*}
\mathsf{m}^\nu; \mathsf{D} &=~ (\eta \otimes \eta);\mathsf{m}; \nu; \mathsf{D} \tag{\ref{Talgmon}} \\
&=~ (\eta \otimes \eta);\mathsf{m}; \mathsf{d}; (\nu \otimes \mathsf{D}); \mathsf{m}^\nu \tag{\ref{Tdiffalg}} \\
&=~ \left((\eta \otimes 1) + (1 \otimes \eta);\sigma \right)(\nu \otimes \mathsf{D}); \mathsf{m}^\nu \tag{\ref{dpoly}} \\
&=~Ê(\eta \otimes 1); (\nu \otimes \mathsf{D}); \mathsf{m}^\nu + (1 \otimes \eta);\sigma;(\nu \otimes \mathsf{D}); \mathsf{m}^\nu \\
&=~Ê(\eta \otimes 1); (\nu \otimes \mathsf{D}); \mathsf{m}^\nu + (1 \otimes \eta);(\mathsf{D} \otimes \nu);\sigma; \mathsf{m}^\nu \\
&=~Ê(1 \otimes \mathsf{D}); \mathsf{m}^\nu + (\mathsf{D} \otimes 1);\sigma; \mathsf{m}^\nu \tag{\ref{Talgdef}}\\
&=~Ê(1 \otimes \mathsf{D}); \mathsf{m}^\nu + (\mathsf{D} \otimes 1); \mathsf{m}^\nu \tag{\ref{monoideq}}
\end{align*}
For the higher-order Leibniz rule and the Fa\`a di Bruno rule, we will need the following well-known identity for binomial coefficients (where $1 \leq k \leq n$):
\begin{equation}\label{binom}\begin{gathered} \binom{n}{k} + \binom{n}{k-1} = \binom{n+1}{k} \end{gathered}\end{equation} 
{\em (iii) Higher-Order Leibniz Rule}: We prove this by induction on $n$. For the base case $n=0$, the identity holds trivally. Suppose that the desired equality holds for $k \leq n$, we now show it for $n+1$: 
\begin{align*}
&\mathsf{m}^\nu; \mathsf{D}^{n+1} =~ \nu; \mathsf{D}^{n}; \mathsf{D} \\
&=~ \left(\sum\limits^{n}_{k=0} \binom{n}{k} \cdot (\mathsf{D}^k \otimes \mathsf{D}^{n-k}); \mathsf{m}^\nu \right); \mathsf{D} \tag{Induction Hypothesis}\\
&=~ \sum\limits^{n}_{k=0} \binom{n}{k} \cdot (\mathsf{D}^k \otimes \mathsf{D}^{n-k}); \mathsf{m}^\nu; \mathsf{D} \\
&=~ \sum\limits^{n}_{k=0} \binom{n}{k} \cdot (\mathsf{D}^k \otimes \mathsf{D}^{n-k}); \left((\mathsf{D} \otimes 1);\mathsf{m}^\nu + (1 \otimes \mathsf{D});\mathsf{m}^\nu \right) \tag{\ref{der2}} \\
&=~ \sum\limits^{n}_{k=0} \binom{n}{k} \cdot (\mathsf{D}^k \otimes \mathsf{D}^{n-k});(\mathsf{D} \otimes 1);\mathsf{m}^\nu \\
&~~~+ \sum\limits^{n}_{k=0} \binom{n}{k} \cdot (\mathsf{D}^k \otimes \mathsf{D}^{n-k});(1 \otimes \mathsf{D});\mathsf{m}^\nu \\
&=~ \sum\limits^{n}_{k=0} \binom{n}{k} \cdot (\mathsf{D}^{k+1} \otimes \mathsf{D}^{n-k});\mathsf{m}^\nu \\
&~~~+ \sum\limits^{n}_{k=0} \binom{n}{k} \cdot (\mathsf{D}^k \otimes \mathsf{D}^{n+1-k});\mathsf{m}^\nu \\ 
&=~ \sum\limits^{n+1}_{j=1} \binom{n}{j-1} \cdot (\mathsf{D}^{j} \otimes \mathsf{D}^{n+1-j});\mathsf{m}^\nu + \sum\limits^{n}_{k=0} \binom{n}{k} \cdot (\mathsf{D}^k \otimes \mathsf{D}^{n+1-k});\mathsf{m}^\nu \tag{Reindexing}\\ 
&=~ (1 \otimes \mathsf{D}^{n+1}); \mathsf{m}^\nu + (\mathsf{D}^{n+1} \otimes 1); \mathsf{m}^\nu  \\
&~~~+ \sum\limits^{n}_{j=1} \binom{n}{j-1} \cdot (\mathsf{D}^{j} \otimes \mathsf{D}^{n+1-j});\mathsf{m}^\nu + \sum\limits^{n}_{k=1} \binom{n}{k} \cdot (\mathsf{D}^k \otimes \mathsf{D}^{n+1-k});\mathsf{m}^\nu \\ 
&=~Ê(1 \otimes \mathsf{D}^{n+1}); \mathsf{m}^\nu + (\mathsf{D}^{n+1} \otimes 1); \mathsf{m}^\nu +  \sum\limits^{n}_{k=1} \left(\binom{n}{k} + \binom{n}{k-1}\right) \cdot (\mathsf{D}^k \otimes \mathsf{D}^{n+1-k}); \mathsf{m}^\nu \\
&=~ (1 \otimes \mathsf{D}^{n+1}); \mathsf{m}^\nu +  (\mathsf{D}^{n+1} \otimes 1); \mathsf{m}^\nu + \sum\limits^{n}_{k=1} \binom{n+1}{k} \cdot (\mathsf{D}^k \otimes \mathsf{D}^{n+1-k}); \mathsf{m}^\nu \\
&=~ \sum\limits^{n+1}_{k=0} \binom{n+1}{k} \cdot (\mathsf{D}^k \otimes \mathsf{D}^{n+1-k}); \mathsf{m}^\nu \tag{\ref{binom}}
\end{align*}
{\em (iv) Fa\`a di Bruno Rule}: We prove this by induction on $n$. The base case $n=0$ is precisely (\ref{Tdiffalg}). Suppose that the desired equality holds for $k \leq n$, we now show it for $n+1$: 
\begin{align*}
&\nu; \mathsf{D}^{n+2} = ~ \nu; \mathsf{D}^{n+1}; \mathsf{D} \\
&=~ \left( \sum\limits^{n}_{k=0} \binom{n}{k} \cdot   \mathsf{d}; (\nu \otimes 1); (\mathsf{D}^k \otimes \mathsf{D}^{n-k+1}); \mathsf{m}^\nu  \right); \mathsf{D} \tag{Induction Hypothesis}\\
&=~  \sum\limits^{n}_{k=0} \binom{n}{k} \cdot   \mathsf{d}; (\nu \otimes 1); (\mathsf{D}^k \otimes \mathsf{D}^{n-k+1}); \mathsf{m}^\nu ; \mathsf{D} \\
&=~  \sum\limits^{n}_{k=0} \binom{n}{k} \cdot   \mathsf{d}; (\nu \otimes 1); (\mathsf{D}^k \otimes \mathsf{D}^{n-k+1}); (\mathsf{D} \otimes 1);\mathsf{m}^\nu\\
&~~~ + \sum\limits^{n}_{k=0} \binom{n}{k} \cdot   \mathsf{d}; (\nu \otimes 1); (\mathsf{D}^k \otimes \mathsf{D}^{n-k+1});(1 \otimes \mathsf{D});\mathsf{m}^\nu \\
&=~  \sum\limits^{n}_{k=0} \binom{n}{k} \cdot   \mathsf{d}; (\nu \otimes 1); (\mathsf{D}^{k+1} \otimes \mathsf{D}^{n-k+1}); \mathsf{m}^\nu\\
&~~~ +\sum\limits^{n}_{k=0} \binom{n}{k} \cdot   \mathsf{d}; (\nu \otimes 1); (\mathsf{D}^k \otimes \mathsf{D}^{n+1-k+1});\mathsf{m}^\nu \\
&=~  \sum\limits^{n+1}_{j=1} \binom{n}{j-1} \cdot   \mathsf{d}; (\nu \otimes 1); (\mathsf{D}^{j} \otimes \mathsf{D}^{n+1-j+1}); \mathsf{m}^\nu \\
&~~~+\sum\limits^{n}_{k=0} \binom{n}{k} \cdot   \mathsf{d}; (\nu \otimes 1); (\mathsf{D}^k \otimes \mathsf{D}^{n+1-k+1});\mathsf{m}^\nu \tag{Reindexing} \\
&=~ \mathsf{d}; (\nu \otimes 1); (\mathsf{D}^{n+1} \otimes \mathsf{D}); \mathsf{m}^\nu +  \sum\limits^{n}_{j=1} \binom{n}{j-1} \cdot   \mathsf{d}; (\nu \otimes 1); (\mathsf{D}^{j} \otimes \mathsf{D}^{n+1-j+1}); \mathsf{m}^\nu \\
&~~~+ \mathsf{d}; (\nu \otimes 1); (1 \otimes \mathsf{D}^{n+2});\mathsf{m}^\nu +\sum\limits^{n}_{k=1} \binom{n}{k} \cdot   \mathsf{d}; (\nu \otimes 1); (\mathsf{D}^k \otimes \mathsf{D}^{n+1-k+1});\mathsf{m}^\nu \\
&=~ \mathsf{d}; (\nu \otimes 1); (\mathsf{D}^{n+1} \otimes \mathsf{D}); \mathsf{m}^\nu +  \mathsf{d}; (\nu \otimes 1); (1 \otimes \mathsf{D}^{n+2});\mathsf{m}^\nu \\
&~~~+\sum\limits^{n}_{k=1} \left( \binom{n}{k} + \binom{n}{k-1} \right) \cdot   \mathsf{d}; (\nu \otimes 1); (\mathsf{D}^k \otimes \mathsf{D}^{n+1-k+1});\mathsf{m}^\nu \\
&=~ \mathsf{d}; (\nu \otimes 1); (\mathsf{D}^{n+1} \otimes \mathsf{D}); \mathsf{m}^\nu +  \mathsf{d}; (\nu \otimes 1); (1 \otimes \mathsf{D}^{n+2});\mathsf{m}^\nu \\
&~~~+\sum\limits^{n}_{k=1} \binom{n+1}{k} \cdot   \mathsf{d}; (\nu \otimes 1); (\mathsf{D}^k \otimes \mathsf{D}^{n+1-k+1});\mathsf{m}^\nu \tag{\ref{binom}} \\
&=~Ê \sum\limits^{n+1}_{k=0} \binom{n+1}{k} \cdot   \mathsf{d}; (\nu \otimes 1); (\mathsf{D}^k \otimes \mathsf{D}^{n+1-k+1}); \mathsf{m}^\nu 
\end{align*}
\end{proof} 

A classical result for differential algebras is that the set of elements whose derivative is zero is a sub-algebra \cite{van2012galois}. A similar property is true for $\mathsf{T}$-differential algebras.  

\begin{proposition} Let $(A, \nu, \mathsf{D})$ be a $\mathsf{T}$-differential algebra and suppose that the kernel of $\mathsf{D}$ exists, that is, there exists an object $\mathsf{ker}(\mathsf{D})$ and a map $\iota: \mathsf{ker}(\mathsf{D}) \to A$ such that the following is an equalizer diagram: 
  \[  \xymatrixcolsep{5pc}\xymatrix{\mathsf{ker}(\mathsf{D}) \ar[r]^-{\iota} & A \ar@<1ex>[r]^{\mathsf{D}} \ar@<-1ex>[r]_{0} & A  
  } \]
Then kernel of $\mathsf{D}$ admits a $\mathsf{T}$-algebra structure $(\mathsf{ker}(\mathsf{D}), \tilde{\iota})$ such that $\iota: (\mathsf{ker}(\mathsf{D}), \tilde{\iota}) \to (A, \nu)$ is a monic $\mathsf{T}$-algebra morphism. 
\end{proposition}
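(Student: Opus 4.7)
The plan is to construct $\tilde{\iota}$ using the universal property of the equalizer applied to the composite $\mathsf{T}(\iota); \nu: \mathsf{T}(\mathsf{ker}(\mathsf{D})) \to A$. For this, the key step is to show that this composite lands inside $\mathsf{ker}(\mathsf{D})$, i.e.\ that $\mathsf{T}(\iota); \nu; \mathsf{D} = 0$. Using the defining axiom (\ref{Tdiffalg}) of a $\mathsf{T}$-differential algebra, followed by naturality of the deriving transformation $\mathsf{d}$, one computes
\begin{equation*}
\mathsf{T}(\iota); \nu; \mathsf{D} = \mathsf{T}(\iota); \mathsf{d}; (\nu \otimes \mathsf{D}); \mathsf{m}^\nu = \mathsf{d};\bigl(\mathsf{T}(\iota);\nu \otimes \iota;\mathsf{D}\bigr); \mathsf{m}^\nu.
\end{equation*}
Now $\iota$ equalizes $\mathsf{D}$ and $0$, so $\iota;\mathsf{D} = 0$. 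Since the tensor product is compatible with the additive structure (so tensoring with $0$ yields $0$), the whole composite vanishes. Hence by the universal property of the equalizer, there exists a unique $\tilde{\iota}: \mathsf{T}(\mathsf{ker}(\mathsf{D})) \to \mathsf{ker}(\mathsf{D})$ with $\tilde{\iota}; \iota = \mathsf{T}(\iota); \nu$. By construction this last equation says exactly that $\iota$ is a $\mathsf{T}$-algebra morphism, and $\iota$ is monic because it is an equalizer.

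It then remains to verify that $(\mathsf{ker}(\mathsf{D}), \tilde{\iota})$ satisfies the $\mathsf{T}$-algebra axioms (\ref{Talgdef}). The strategy is the standard one: compose each axiom with the monic $\iota$ and reduce to the corresponding axiom for $(A,\nu)$. For the unit axiom, $\eta; \tilde{\iota}; \iota = \eta; \mathsf{T}(\iota); \nu = \iota; \eta; \nu = \iota$ by naturality of $\eta$ and the unit axiom for $\nu$, so $\eta; \tilde{\iota} = 1$ since $\iota$ is monic. For the associativity axiom, both $\mu; \tilde{\iota}; \iota$ and $\mathsf{T}(\tilde{\iota}); \tilde{\iota}; \iota$ unfold to $\mathsf{T}\mathsf{T}(\iota); \mathsf{T}(\nu); \nu$, using the defining equation of $\tilde{\iota}$, naturality of $\mu$, and the associativity axiom for $\nu$; monicity of $\iota$ again gives the desired equality.

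The argument is entirely driven by the chain rule axiom for $\mathsf{T}$-differential algebras together with the equalizer universal property, so there is no real obstacle beyond carefully tracking the naturality squares and using that additivity of the tensor makes $(-)\otimes 0 = 0$. The latter is precisely where the codifferential structure enters in an essential way, since in a non-additive setting one could not conclude that $\mathsf{d};(\mathsf{T}(\iota);\nu \otimes 0); \mathsf{m}^\nu = 0$.
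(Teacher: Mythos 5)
Your proposal is correct and follows essentially the same route as the paper: apply (\ref{Tdiffalg}) and naturality of $\mathsf{d}$ to show $\mathsf{T}(\iota);\nu;\mathsf{D}=0$ using $\iota;\mathsf{D}=0$ and additivity of the tensor, obtain $\tilde{\iota}$ from the equalizer's universal property, and verify the two $\mathsf{T}$-algebra axioms by postcomposing with the monic $\iota$. No gaps.
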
 
\begin{proof} Notice that we have the following equality: 
\begin{align*}
\mathsf{T}(\iota);\nu; \mathsf{D} &=~Ê\mathsf{T}(\iota); \mathsf{d}; (\nu \otimes  \mathsf{D}); \mathsf{m}^\nu =Ê\mathsf{d}; (\mathsf{T}(\iota) \otimes \iota); (\nu \otimes \mathsf{D}); \mathsf{m}^\nu =Ê\mathsf{d}; (\mathsf{T}(\iota) \otimes \iota); (\nu \otimes 0); \mathsf{m}^\nu = 0
\end{align*}
Therefore, $\mathsf{T}(\iota);\nu$ equalizes $0$ and $\mathsf{D}$. So by universality of the kernel, there exists a unique map $\tilde{\iota}: \mathsf{T}(\mathsf{ker}(\mathsf{D})) \to \mathsf{ker}(\mathsf{D})$ such that the following diagram commutes: 
  \[  \xymatrixcolsep{5pc}\xymatrix{\mathsf{ker}(\mathsf{D}) \ar[r]^-{\iota} & A \ar@<1ex>[r]^{\mathsf{D}} \ar@<-1ex>[r]_{0} & A  \\
  \mathsf{T}(\mathsf{ker}(\mathsf{D}) \ar[r]_-{\mathsf{T}(\iota)} \ar@{-->}[u]^-{\exists \oc ~\tilde{\iota}} & \mathsf{T}(A) \ar[u]_-{\nu}
  } \]
  To show that $(\mathsf{ker}(\mathsf{D}), \tilde{\iota})$  is a $\mathsf{T}$-algebra, notice the following equalities: 
\begin{align*}
\eta; \tilde{\iota}; \iota &=~ \eta;\mathsf{T}(\iota);\nu =Ê\iota; \eta; \nu = \iota
\end{align*}  
\begin{align*}
\mathsf{T}(\tilde{\iota}); \tilde{\iota}; \iota &=~ \mathsf{T}(\tilde{\iota});\mathsf{T}(\iota);\nu = \mathsf{T}^2(\iota);\mathsf{T}(\nu);\nu = \mathsf{T}^2(\iota);\mu;\nu =Ê\mu;\mathsf{T}(\iota);\nu = \mu;\tilde{\iota}; \iota 
\end{align*}
Since $\iota$ is monic, we have $\eta; \tilde{\iota}=1$ and $\mathsf{T}(\tilde{\iota}); \tilde{\iota}=Ê\mu;\tilde{\iota}$. Therefore, $(\mathsf{ker}(\mathsf{D}), \tilde{\iota})$ is a $\mathsf{T}$-algebra, and by definition of $\tilde{\iota}$, $\iota$ is a $\mathsf{T}$-algebra morphism. 
\end{proof} 

Next we discuss $\mathsf{T}$-differential algebra morphisms. 

\begin{definition} A \textbf{$\mathsf{T}$-differential algebra morphism} $f: (A, \nu, \mathsf{D}) \to (B, \nu^\prime, \mathsf{D}^\prime)$ is a $\mathsf{T}$-algebra morphism $f: (A, \nu) \to (B, \nu^\prime)$ such that the following diagram commutes: 
\begin{equation}\label{Tdiffmap}\begin{gathered} \xymatrixcolsep{5pc}\xymatrix{A \ar[d]_-{\mathsf{D}} \ar[r]^-{f} & B \ar[d]^-{\mathsf{D}^\prime} \\
A \ar[r]_-{f} & B
  } \end{gathered}\end{equation}
\end{definition}

For a codifferential category $\mathbb{X}$ with algebra modality $(\mathsf{T}, \mu, \eta, \mathsf{m}, \mathsf{u})$, we denote the category of $\mathsf{T}$-differential algebras and $\mathsf{T}$-differential algebra morphisms by $\mathsf{D}[\mathbb{X}^\mathsf{T}]$ (where recall that $\mathbb{X}^\mathsf{T}$ is the Eilenberg-Moore category of $\mathsf{T}$). In Section \ref{freesec} and Section \ref{cofreesec}, we will construct both free and cofree $\mathsf{T}$-differential algebras for suitable codifferential categories and in particular show that the category of $\mathsf{T}$-differential algebras is monadic and comonadic. 

We now turn our attention to $\mathsf{T}$-differential algebra structure for free $\mathsf{T}$-algebras. In this case, $\mathsf{T}$-derivations are completely determined by a certain set of maps. 

\begin{lemma} For a $\mathsf{T}$-algebra $(A, \nu)$, let $\mathcal{D}(A, \nu)$ be the set of maps $\mathsf{D}: A \to A$ such that $(A, \nu, \mathsf{D})$ is a $\mathsf{T}$-differential algebra. Then $\mathcal{D}(A, \nu)$ is a commutative monoid, that is, $0 \in \mathcal{D}(A, \nu)$ and if $\mathsf{D}_1 \in \mathcal{D}(A, \nu)$ and $\mathsf{D}_2 \in \mathcal{D}(A, \nu)$, then so is $\mathsf{D}_1+ \mathsf{D}_2 \in \mathcal{D}(A, \nu)$. 
\end{lemma}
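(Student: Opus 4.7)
The plan is to unpack Definition~\ref{Tdiffalgdef}: a map $\mathsf{D} : A \to A$ lies in $\mathcal{D}(A,\nu)$ precisely when the single equation $\nu;\mathsf{D} = \mathsf{d};(\nu \otimes \mathsf{D});\mathsf{m}^\nu$ holds. Both closure conditions then reduce to showing that this equation is stable under the additive operations on the hom-sets, which follows directly from the fact that we are working in an additive symmetric monoidal category (where $\otimes$ and composition are bilinear, and in particular $f \otimes 0 = 0 = 0 \otimes f$).

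First I would check that $0 \in \mathcal{D}(A,\nu)$. On the left of the defining diagram we have $\nu;0 = 0$. On the right, using bilinearity of $\otimes$, we get $\nu \otimes 0 = 0$, hence $\mathsf{d};(\nu \otimes 0);\mathsf{m}^\nu = \mathsf{d}; 0;\mathsf{m}^\nu = 0$. Both sides agree, so $(A,\nu,0)$ is a $\mathsf{T}$-differential algebra.

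Next I would verify closure under addition. Given $\mathsf{D}_1, \mathsf{D}_2 \in \mathcal{D}(A,\nu)$, I compute
\begin{align*}
\nu ; (\mathsf{D}_1 + \mathsf{D}_2) &= \nu;\mathsf{D}_1 + \nu;\mathsf{D}_2 \\
&= \mathsf{d};(\nu \otimes \mathsf{D}_1);\mathsf{m}^\nu + \mathsf{d};(\nu \otimes \mathsf{D}_2);\mathsf{m}^\nu \\
&= \mathsf{d};\bigl((\nu \otimes \mathsf{D}_1) + (\nu \otimes \mathsf{D}_2)\bigr);\mathsf{m}^\nu \\
&= \mathsf{d};\bigl(\nu \otimes (\mathsf{D}_1 + \mathsf{D}_2)\bigr);\mathsf{m}^\nu,
\end{align*}
where the second equality applies the hypothesis that each $\mathsf{D}_i$ is a $\mathsf{T}$-derivation and the remaining steps use bilinearity of composition and tensor. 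This shows $\mathsf{D}_1 + \mathsf{D}_2 \in \mathcal{D}(A,\nu)$.

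Since $\mathcal{D}(A,\nu)$ sits inside the commutative monoid $\mathbb{X}(A,A)$ and is closed under $0$ and $+$, it inherits the commutative monoid structure, completing the proof. No genuine obstacle is anticipated: the whole argument is a straightforward application of additive enrichment, and the only thing worth being careful about is to invoke $\nu \otimes 0 = 0$ rather than assuming $\mathsf{d};0 = 0$ directly (though both hold).
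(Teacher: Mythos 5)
Your proof is correct and is exactly the argument the paper intends: the paper's own proof is the one-line remark that the result ``follows from the additive symmetric monoidal structure of a codifferential category,'' and your computation simply spells out that bilinearity of composition and $\otimes$ (including $\nu \otimes 0 = 0$) makes the defining equation (\ref{Tdiffalg}) stable under $0$ and $+$.
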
 
\begin{proof} Follows from the additive symmetric monoidal structure of a codifferential category. 
\end{proof} 

\begin{proposition}\label{mapTalg} For each object $A$ in a codifferential category $\mathbb{X}$ with algebra modality $(\mathsf{T}, \mu, \eta, \mathsf{m}, \mathsf{u})$, we have the following commutative monoid isomorphism: 
\begin{equation}\label{}\begin{gathered} \mathcal{D}(\mathsf{T}(A), \mu) \cong \mathbb{X}(A, \mathsf{T}(A))
 \end{gathered}\end{equation}
\end{proposition}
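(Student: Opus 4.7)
The plan is to exhibit an explicit pair of mutually inverse maps
$$\Phi: \mathcal{D}(\mathsf{T}(A), \mu) \to \mathbb{X}(A, \mathsf{T}(A)), \qquad \Psi: \mathbb{X}(A, \mathsf{T}(A)) \to \mathcal{D}(\mathsf{T}(A), \mu),$$
where $\Phi(\mathsf{D}) := \eta; \mathsf{D}$ (restricting a derivation along the unit) and $\Psi(f) := \mathsf{d}; (1 \otimes f); \mathsf{m}$ (extending via the deriving transformation, exploiting that the monoid structure on $\mathsf{T}(A)$ is $\mathsf{m}$ itself, since $\mathsf{m}^\mu = \mathsf{m}$). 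The additive structure of the hom-sets makes $\Phi$ manifestly a monoid homomorphism, so all that remains is the bijection.

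First I would verify that $\Psi(f)$ really lies in $\mathcal{D}(\mathsf{T}(A), \mu)$, i.e.\ that it satisfies the chain rule condition (\ref{Tdiffalg}) with $\nu = \mu$. The calculation is
\begin{align*}
\mu; \Psi(f) &= \mu; \mathsf{d}; (1 \otimes f); \mathsf{m} \\
&= \mathsf{d}; (\mu \otimes \mathsf{d}); (\mathsf{m} \otimes 1); (1 \otimes f); \mathsf{m} \tag{\textbf{[d.4]}} \\
&= \mathsf{d}; (\mu \otimes \mathsf{d}); (1 \otimes 1 \otimes f); (1 \otimes \mathsf{m}); \mathsf{m} \tag{assoc.\ of $\mathsf{m}$} \\
&= \mathsf{d}; (\mu \otimes \Psi(f)); \mathsf{m},
\end{align*}
which is exactly the required identity.

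Next I would check $\Phi \circ \Psi = \mathrm{id}$. Unwinding,
$$\Phi(\Psi(f)) = \eta; \mathsf{d}; (1 \otimes f); \mathsf{m} = (\mathsf{u} \otimes 1); (1 \otimes f); \mathsf{m} = (\mathsf{u} \otimes f); \mathsf{m} = f,$$
using the linear rule \textbf{[d.3]} for the second equality and the unit law of the monoid $(\mathsf{T}(A), \mathsf{m}, \mathsf{u})$ for the last.

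The main obstacle, and the only nontrivial direction, is $\Psi \circ \Phi = \mathrm{id}$: given $\mathsf{D} \in \mathcal{D}(\mathsf{T}(A), \mu)$, one must show $\mathsf{d}; (1 \otimes \eta); (1 \otimes \mathsf{D}); \mathsf{m} = \mathsf{D}$. The trick is to precompose the chain rule condition $\mu; \mathsf{D} = \mathsf{d}; (\mu \otimes \mathsf{D}); \mathsf{m}$ with $\mathsf{T}(\eta): \mathsf{T}(A) \to \mathsf{T}\mathsf{T}(A)$. The left-hand side collapses to $\mathsf{D}$ by the monad unit law $\mathsf{T}(\eta); \mu = 1$, while the right-hand side becomes
$$\mathsf{T}(\eta); \mathsf{d}; (\mu \otimes \mathsf{D}); \mathsf{m} = \mathsf{d}; (\mathsf{T}(\eta) \otimes \eta); (\mu \otimes \mathsf{D}); \mathsf{m} = \mathsf{d}; (1 \otimes \eta); (1 \otimes \mathsf{D}); \mathsf{m}$$
by naturality of $\mathsf{d}$ and the monad unit law again. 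This yields $\Psi(\Phi(\mathsf{D})) = \mathsf{D}$, completing the bijection; since $\Phi$ is already a monoid homomorphism, $\Psi$ is its inverse in the category of commutative monoids.
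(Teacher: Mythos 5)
Your proposal is correct and uses exactly the same pair of maps ($\mathsf{D} \mapsto \eta;\mathsf{D}$ and $f \mapsto \mathsf{d};(1\otimes f);\mathsf{m}$, i.e.\ $f^\flat$) that the paper gives; the paper delegates all verification to a citation of \cite[Theorem 4.15]{blute2015derivations}, whereas you supply those details explicitly, and all three computations (the chain rule for $\Psi(f)$ via \textbf{[d.4]}, and the two composite identities via \textbf{[d.3]}, naturality of $\mathsf{d}$, and the monad unit law) check out.
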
 
\begin{proof} This a direct consequence of \cite[Theorem 4.15]{blute2015derivations} and therefore we will only give how to construct one from the other. If $(\mathsf{T}(A), \mu, \mathsf{D})$ is a $\mathsf{T}$-differential algebra, then precomposing by $\eta$ we obtain:
  \[  \xymatrixcolsep{5pc}\xymatrix{A \ar[r]^-{\eta} & \mathsf{T}(A) \ar[r]^-{\mathsf{D}} & \mathsf{T}(A)  
  } \]
  Conversly, given a map $f: A \to \mathsf{T}(A)$, define the map $f^\flat: \mathsf{T}(A) \to \mathsf{T}(A)$ as follows: 
    \[ f^\flat := \xymatrixcolsep{5pc}\xymatrix{\mathsf{T}(A) \ar[r]^-{\mathsf{d}} & \mathsf{T}(A) \otimes A \ar[r]^-{1 \otimes f} & \mathsf{T}(A) \otimes \mathsf{T}(A) \ar[r]^-{\mathsf{m}} &\mathsf{T}(A)  
  } \]
  Then $(\mathsf{T}(A), \mu, f^\flat)$ is a $\mathsf{T}$-differential algebra.
\end{proof} 

In particular, every endomorphism induces a $\mathsf{T}$-differential algebra. This will be important when we construct free $\mathsf{T}$-differential algebras in Section \ref{freesec}. 

\begin{corollary}\label{endoTalg} Every endomorphism $f: A \to A$ induces a $\mathsf{T}$-differential algebra $(\mathsf{T}(A), \mu, f^\sharp)$ where $f^\sharp := (f; \eta)^\flat$, or more explicitly: 
\begin{equation}\label{fsharp}\begin{gathered} f^\sharp := \xymatrixcolsep{5pc}\xymatrix{\mathsf{T}(A) \ar[r]^-{\mathsf{d}} & \mathsf{T}(A) \otimes A \ar[r]^-{1 \otimes f} & \mathsf{T}(A) \otimes A \ar[r]^-{\mathsf{d}^\circ} &\mathsf{T}(A)  } \end{gathered}\end{equation}
\end{corollary}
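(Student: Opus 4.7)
The plan is to deduce this directly from Proposition \ref{mapTalg}. Given an endomorphism $f: A \to A$, post-composing with the unit of the monad yields a map $f;\eta: A \to \mathsf{T}(A)$, which lies in the hom-set $\mathbb{X}(A, \mathsf{T}(A))$. Applying the bijection of Proposition \ref{mapTalg} to this map produces a $\mathsf{T}$-derivation $(f;\eta)^\flat: \mathsf{T}(A) \to \mathsf{T}(A)$ such that $(\mathsf{T}(A), \mu, (f;\eta)^\flat)$ is a $\mathsf{T}$-differential algebra. So by definition $f^\sharp := (f;\eta)^\flat$ already gives the desired $\mathsf{T}$-differential algebra structure, and there is nothing to verify about the axiom (\ref{Tdiffalg}) beyond what Proposition \ref{mapTalg} guarantees.

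It therefore only remains to rewrite the composite $(f;\eta)^\flat$ in the explicit form (\ref{fsharp}). Unfolding the definition of $(-)^\flat$ given in the proof of Proposition \ref{mapTalg} with the map $f;\eta$ plugged in, we have
\[ (f;\eta)^\flat = \mathsf{d}; (1 \otimes (f;\eta)); \mathsf{m}. \]
Using functoriality of $\otimes$ to split $1 \otimes (f;\eta)$ as $(1 \otimes f); (1 \otimes \eta)$, this becomes
\[ \mathsf{d}; (1 \otimes f); (1 \otimes \eta); \mathsf{m}. \]
By the definition of the coderiving transformation $\mathsf{d}^\circ = (1 \otimes \eta); \mathsf{m}$ (Definition \ref{dcircdef}), the tail of this composite is precisely $\mathsf{d}^\circ$, so we obtain $f^\sharp = \mathsf{d}; (1 \otimes f); \mathsf{d}^\circ$, which is exactly (\ref{fsharp}).

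There is no real obstacle here: the statement is essentially a renaming and a one-line unfolding. The only thing worth being careful about is making sure the $\flat$-construction from Proposition \ref{mapTalg} is being applied to $f;\eta$ (not $f$ itself), and that the rewriting $(1 \otimes \eta); \mathsf{m} = \mathsf{d}^\circ$ is invoked correctly so that formula (\ref{fsharp}) matches the intended shape.
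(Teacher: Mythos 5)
Your proposal is correct and is exactly the argument the paper intends: the corollary is stated without proof precisely because it follows by applying Proposition \ref{mapTalg} to the map $f;\eta \in \mathbb{X}(A,\mathsf{T}(A))$ and then unfolding $(f;\eta)^\flat = \mathsf{d};(1\otimes f);(1\otimes \eta);\mathsf{m} = \mathsf{d};(1\otimes f);\mathsf{d}^\circ$ via Definition \ref{dcircdef}. Nothing further is needed.
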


\begin{corollary}\label{dcircv} If $(A, \nu, \mathsf{D})$ is a $\mathsf{T}$-differential algebra, then $\nu: (\mathsf{T}A, \mu, \mathsf{D}^\sharp) \to (A, \nu, \mathsf{D})$ is a $\mathsf{T}$-differential algebra morphism. 
\end{corollary}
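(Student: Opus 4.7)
The claim has two parts: that $\nu$ is a $\mathsf{T}$-algebra morphism $(\mathsf{T}A,\mu) \to (A,\nu)$, and that it intertwines the derivations $\mathsf{D}^\sharp$ and $\mathsf{D}$. The first part is free: the right-hand square in (\ref{Talgdef}) is precisely the statement that $\nu: (\mathsf{T}A,\mu) \to (A,\nu)$ is a $\mathsf{T}$-algebra morphism. So the entire content of the corollary is to verify the square (\ref{Tdiffmap}), namely $\mathsf{D}^\sharp;\nu = \nu;\mathsf{D}$.

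The plan is to expand the left-hand side using the explicit formula (\ref{fsharp}) for $\mathsf{D}^\sharp$ (specialized to $f = \mathsf{D}$) and the definition of $\mathsf{d}^\circ$ from (\ref{coderive}), giving
\[
\mathsf{D}^\sharp;\nu \;=\; \mathsf{d};(1 \otimes \mathsf{D});(1 \otimes \eta);\mathsf{m};\nu.
\]
Now I would invoke the key fact, recalled just after (\ref{Talgmonu}) in the paper, that $\nu:(\mathsf{T}A,\mathsf{m},\mathsf{u}) \to (A,\mathsf{m}^\nu,\mathsf{u}^\nu)$ is a monoid morphism, so that $\mathsf{m};\nu = (\nu \otimes \nu);\mathsf{m}^\nu$. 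Substituting this and using naturality to regroup tensor factors yields
\[
\mathsf{D}^\sharp;\nu \;=\; \mathsf{d};\bigl(\nu \otimes (\mathsf{D};\eta;\nu)\bigr);\mathsf{m}^\nu.
\]

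At this point the unit axiom $\eta;\nu = 1_A$ from (\ref{Talgdef}) collapses the second tensor factor to $\mathsf{D}$, giving $\mathsf{d};(\nu \otimes \mathsf{D});\mathsf{m}^\nu$, which by the defining diagram (\ref{Tdiffalg}) of a $\mathsf{T}$-differential algebra equals $\nu;\mathsf{D}$. I do not foresee a real obstacle here: the argument is essentially just unwinding the definition of $\mathsf{D}^\sharp$, using that $\nu$ preserves the multiplication, and then plugging into the chain-rule axiom for $(A,\nu,\mathsf{D})$. The only point requiring any care is making sure the monoid-morphism property of $\nu$ is applied in the right direction, which is why I would cite the paragraph after (\ref{Talgmonu}) explicitly rather than trying to derive it on the fly.
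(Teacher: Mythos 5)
Your proof is correct and follows essentially the same route as the paper's: expand $\mathsf{D}^\sharp;\nu$ via the definition of $\mathsf{d}^\circ$, apply the monoid-morphism property $\mathsf{m};\nu = (\nu \otimes \nu);\mathsf{m}^\nu$, collapse with the unit law $\eta;\nu = 1$, and finish with the defining chain-rule square (\ref{Tdiffalg}). Nothing is missing.
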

\begin{proof} This follows from the fact that $\nu$ is a monoid morphism and a $\mathsf{T}$-algebra structure: 
\begin{align*}
\mathsf{D}^\sharp; \nu &=~  \mathsf{d}; (1 \otimes \mathsf{D}); \mathsf{d}^\circ; \nu \\
&=~ \mathsf{d}; (1 \otimes \mathsf{D}); (1 \otimes \eta); \mathsf{m}; \nu \\
&=~ \mathsf{d}; (1 \otimes \mathsf{D}); (1 \otimes \eta); (\nu \otimes \nu); \mathsf{m}^\nu \tag{\ref{monmorph}} \\
&=~ \mathsf{d}; (\nu \otimes \mathsf{D}); \mathsf{m}^\nu \tag{\ref{Talgdef}} \\
&=~ \nu; \mathsf{D} \tag{\ref{Tdiffalg}}
\end{align*}
\end{proof} 

We finish this section with some simple examples of $\mathsf{T}$-differential algebras which all codifferential categories have.

\begin{example} \normalfont For every map $a: A \to K$, define the map $\mathsf{d}_{a}: \mathsf{T}(A) \to \mathsf{T}(A)$ as follows: 
\[\mathsf{d}_a := (a; \mathsf{u})^\flat = \mathsf{d}; (1 \otimes a)\]
By Proposition \ref{mapTalg}, $(\mathsf{T}(A), \mu, \mathsf{d}_a)$ is a $\mathsf{T}$-differential algebra. In particular for the monoidal unit $K$, $\mathsf{d}_{1_K} = \mathsf{d}$ and therefore $(\mathsf{T}(K), \mu, \mathsf{d})$ is a $\mathsf{T}$-differential algebra. \end{example}

\begin{example} \normalfont In a codifferential category, define the natural transformation $\mathsf{L}: \mathsf{T}(A) \to \mathsf{T}(A)$ as follows: 
  \[ \mathsf{L} := \xymatrixcolsep{5pc}\xymatrix{\mathsf{T}(A) \ar[r]^-{\mathsf{d}} & \mathsf{T}(A) \otimes A \ar[r]^-{\mathsf{d}^\circ} & \mathsf{T}(A)  
  } \]
This natural transformation is important for the notion of {\em antiderivatives} for codifferential categories \cite{cockett_lemay_2018}. Notice that $\mathsf{L} = 1^\sharp$ and so by Corollary \ref{endoTalg}, $(\mathsf{T}(A), \mu, \mathsf{L})$ is a $\mathsf{T}$-differential algebra. In fact, that this is a $\mathsf{T}$-differential algebra was crucial for many of the proofs in \cite[Section 6]{cockett_lemay_2018} (though the notion of $\mathsf{T}$-differential algebra was not developed at the time). 
\end{example}

\section{Examples}\label{exsec}

In this section we discuss three examples of codifferential categories and their $\mathsf{T}$-differential algebras.  

\begin{example} \normalfont \label{symex} A well-known example of a codifferential category comes from the symmetric algebra construction \cite{blute2018differential, blute2006differential, blute2015derivations} where the differential structure corresponds to polynomial differentiation. Let $R$ be a commutative ring and let $\mathsf{MOD}_\mathbb{R}$ be the category of $R$-modules and $R$-linear maps between them. The additive symmetric monoidal structure of $\mathsf{MOD}_\mathbb{R}$ is given by the standard tensor product and additive enrichment of $R$-modules. For an $R$-module $M$, define the symmetric algebra over $M$, $\mathsf{Sym}(M)$, as follows (see \cite[Section 8, Chapter XVI ]{lang2002algebra} for more details): 
$$\mathsf{Sym}(M)= \bigoplus^{\infty}_{n=0} \mathsf{Sym}^n(M)= R \oplus M \oplus \mathsf{Sym}^2(M) \oplus ... $$ 
where $\mathsf{Sym}^n(M)$ is simply the quotient of $M^{\otimes^n}$ by the tensor symmetry equalities. $\mathsf{Sym}(M)$ is the free commutative $R$-algebra over $M$. This induces a monad $\mathsf{Sym}$ on $\mathsf{MOD}_R$ which is also an algebra modality (see \cite[Proposition 2.9]{blute2006differential} for more details). The algebra modality comes equipped with a deriving transformation $\mathsf{d}: \mathsf{Sym}(M) \to ~\mathsf{Sym}(M) \otimes M$ defined on pure tensors as follows:
$$\mathsf{d}(a_1 \otimes ... \otimes a_n)= \sum_{i=1}^{n} (a_1 \otimes ... \otimes a_{i-1} \otimes a_{i+1} \otimes ... \otimes a_n) \otimes a_i$$ 
which we extend by $R$-linearity. This makes $\mathsf{MOD}_R$ a codifferential category. To see that this is in fact polynomial differentiation, one should look at free $R$-modules. For example, if $M$ is a free module with basis $X=\lbrace x_1, x_2, \hdots \rbrace$, then $\mathsf{Sym}(M) \cong R[X]$ as $R$-algebras (where $R[X]$ is the polynomial ring over the set $X$) and the deriving transformation can be expressed as taking the sum of partial derivatives:
\[\mathsf{d}(\mathsf{p}(x_1, \hdots, x_n))=  \sum_{i=1}^{n} \frac{\partial \mathsf{p}}{\partial x_i}(x_1, \hdots, x_n) \otimes x_i \]
The $\mathsf{Sym}$-algebras are precisely the commutative $R$-algebras, that is, $\mathsf{MOD}^\mathsf{Sym}_R$ is isomorphic to the category of commutative $R$-algebras. While the $\mathsf{Sym}$-differential algebras are precisely the $R$-differential algebras in the classical sense (Definition \ref{classicaldef}).  The required $\mathsf{Sym}$-derivation coherence (\ref{Tdiffalg}) expresses that classical derivations satisfy a chain rule like identity with polynomials. Indeed, for an $R$-differential algebra $A$ equipped with derivation $\mathsf{D}$ and for any polynomial $\mathsf{p}(x_1, \hdots, x_n) \in R[x_1, \hdots, x_n]$, the following equality holds: 
\begin{equation}\label{}\begin{gathered} \mathsf{D}(\mathsf{p}(a_1, \hdots, a_n)) = \sum_{i=1}^{n} \frac{\partial \mathsf{p}}{\partial x_i}(a_1, \hdots, a_n) \mathsf{D}(a_i) \quad \quad \forall a_i \in A\end{gathered}\end{equation}
which is quite similar to the classical chain rule from differential calculus: $(f \circ g)^\prime (x) = f^\prime (g(x)) g^\prime(x)$. 

Looking forward, since $\mathsf{MOD}_R$ admits both countable products and countable coproducts, this codifferential category admits both free and cofree $\mathsf{Sym}$-differential algebras as constructed in Section \ref{freesec} and Section \ref{cofreesec}. In fact, the free and cofree $\mathsf{Sym}$-differential algebras are precisely those described in Example \ref{freediff} and Example \ref{cofreediff}. 
\end{example} 

\begin{example} \normalfont \label{smoothex} A $\mathcal{C}^\infty$-ring \cite{joyce2011introduction, moerdijk2013models} is a set $A$ equipped a with a family of functions $\Phi_f: A^n \to A$ for each smooth function $f: \mathbb{R}^n \to \mathbb{R}$ satisfying certain compatibility coherences. For example, if $M$ is a smooth manifold, then $\mathcal{C}^\infty(M)$ (as defined in Example \ref{cinfex}) is a $\mathcal{C}^\infty$-ring where for a smooth map $f: \mathbb{R}^n \to \mathbb{R}$, $\Phi_f: \mathcal{C}^\infty(M)^n  \to \mathcal{C}^\infty(M)$ is defined by post-composition by $f$:  
  \[  \xymatrixcolsep{5pc}\xymatrix{\mathcal{C}^\infty(M)^n  \ar[rr]^-{\Phi_f}  & & \mathcal{C}^\infty(M) \\
  } \]
    \[ \left(\xymatrix{ M \ar[r]^-{g_1} & \mathbb{R}
  }, \hdots, \xymatrix{ M \ar[r]^-{g_n} & \mathbb{R}
  } \right) \longmapsto \left(\xymatrixcolsep{3pc}\xymatrix{ M \ar[r]^-{\langle g_1, \hdots, g_n \rangle} & \mathbb{R}^n \ar[r]^-{f} & \mathbb{R}
  } \right) \]

Every $\mathcal{C}^\infty$-ring is a commutative $\mathbb{R}$-algebra, and furthermore, for every $\mathbb{R}$-vector space $V$ there exists a free $\mathcal{C}^\infty$-ring over $V$ \cite[Theorem 3.3]{kainz1987c}, which we denote as $\mathsf{S}^\infty(V)$. This induces a monad $\mathsf{S}^\infty$ on the category of $\mathbb{R}$-vector spaces, $\mathsf{VEC}_\mathbb{R}$, which is an algebra modality and comes equipped with a deriving transformation as defined in \cite[Section 3]{blute2006differential} \footnote{A paper by Cruttwell, Lucyshyn-Wright, and the author which studies this example in more detail is in the works.}, making $\mathsf{VEC}_\mathbb{R}$ a codifferential category. This differential structure is defined by differentiating smooth functions. In particular, for $\mathbb{R}^n$, $\mathsf{S}^\infty(\mathbb{R}^n)= \mathsf{C}^\infty(\mathbb{R}^n)$ and the deriving transformation $\mathsf{d}: \mathsf{C}^\infty(\mathbb{R}^n) \to \mathsf{C}^\infty(\mathbb{R}^n) \otimes \mathbb{R}^n$ is defined as the sum of partial derivatives (similar to the previous example with polynomials): 
\begin{align*}
\mathsf{d}(f)=  \sum_{i=1}^{n} \frac{\partial f}{\partial x_i} \otimes x_i 
\end{align*}
The $\mathsf{S}^\infty$-algebras are precisely $\mathcal{C}^\infty$-rings, while $\mathsf{S}^\infty$-differential algebras are $\mathcal{C}^\infty$-rings $A$ equipped with an endomorphism $\mathsf{D}: A \to A$ which is compatible with the $\mathcal{C}^\infty$-ring structure in the following sense: 
\begin{equation}\label{}\begin{gathered}\mathsf{D}\left(\Phi_f(a_1, \hdots, a_n)\right) = \sum\limits^n_{i=1} \Phi_{\frac{\partial f}{\partial x_i}}(a_1, \hdots, a_n)\mathsf{D}(a_i) \quad \quad \forall a_i \in A \end{gathered}\end{equation}
By Proposition \ref{Tdalgprop}, every $\mathsf{S}^\infty$-differential algebra is an $\mathbb{R}$-differential algebra and these types of derivations from the point of view of K{\"a}hler differentials are discussed in \cite{dubuc19841,joyce2011introduction}. 

As discussed in Example \ref{cinfex}, for a smooth manifold $M$ there is a bijective correspondence between $\mathbb{R}$-derivations of $\mathcal{C}^\infty(M)$ (which are $\mathsf{Sym}$-derivations) and smooth vector fields of $M$ \cite[Theorem 2.72]{lee2009manifolds}. Therefore since every $\mathbb{R}$-derivation of $\mathcal{C}^\infty(M)$ is of the form (\ref{vecdiff}), it follows that every $\mathbb{R}$-derivation of $\mathcal{C}^\infty(M)$ is in fact also a $\mathsf{S}^\infty$-derivation. However for a more exotic $\mathcal{C}^\infty$-rings, $\mathsf{S}^\infty$-derivations and $\mathsf{Sym}$-derivations may not necessarily coincide since $\mathsf{S}^\infty$-derivations satisfy a chain rule for smooth functions while $\mathsf{Sym}$-derivations only satisfy a chain rule for polynomial functions\footnote{Admittedly (and embarrassingly), the author does not have a good example where this is the case.}. 

Similarly to the previous examples, since $\mathsf{VEC}_\mathbb{R}$ admits both countable products and countable coproducts, this codifferential category admits both free and cofree $\mathsf{S}^\infty$-differential algebras. As a particular consequence of having cofree $\mathsf{S}^\infty$-differential algebras, this implies that the Hurwitz series ring over a $\mathcal{C}^\infty$-ring is again a $\mathcal{C}^\infty$-ring. 
\end{example}

\begin{example} \normalfont \label{RBex} The integration counterpart of differential algebras are known as Rota-Baxter algebras \cite{guo2012introduction} and surprisingly the free Rota-Baxter algebra construction provides a codifferential category structure \cite[Example 8.7]{blute2018differential}. For a commutative ring $R$, an $R$-Rota-Baxter algebra is a commutative $R$-algebra $A$ equipped with an $R$-linear map $\mathsf{P}: A \to A$ such that $\mathsf{P}$ satisfies the Rota-Baxter identity, that is, the following equality holds:
\[\mathsf{P}(a)\mathsf{P}(b)=\mathsf{P}(a\mathsf{P}(b))+\mathsf{P}(\mathsf{P}(a)b) \quad \forall ab, \in A\]
The Rota-Baxter identity is integration by parts expressed only using integrals. For an $R$-module $M$, there exists a free Rota-Baxter algebra over $M$ \cite[Chapter 3]{guo2012introduction}, $\mathsf{RB}(M)$, and is defined as follows:
\[\mathsf{RB}(M) := \mathsf{Sh}(\mathsf{Sym(M)) \otimes \mathsf{Sym}(M)}\]
where for $\mathsf{Sh}$ denotes the shuffle algebra \cite{guo2012introduction}. This induces an algebra modality $\mathsf{RB}$ on $\mathsf{MOD}_R$ and which comes equipped with deriving transformation defined simply as:
 \[1 \otimes \mathsf{d}: \mathsf{Sh}(\mathsf{Sym}(M)) \otimes \mathsf{Sym}(M) \to \mathsf{Sh}(\mathsf{Sym}(M)) \otimes \mathsf{Sym}(M) \otimes M\]
 where $\mathsf{d}$ is the deriving transformation for $\mathsf{Sym}$. This provides another codifferential category structure on $\mathsf{MOD}_R$. In this case, $\mathsf{RB}$-algebras are precisely $R$-Rota-Baxter algebras. While one might expect $\mathsf{RB}$-differential algebras to be differential Rota-Baxter algebras \cite{guo2008differential} whose derivation and Rota-Baxter operator satisfy the first fundamental theorem of calculus, this is not the case and quite the opposite is true. An $\mathsf{RB}$-differential algebras is an $R$-Rota-Baxter algebra $A$ with Rota-Baxter operator $\mathsf{P}$, which is also an $R$-differential algebra with derivation $\mathsf{D}$, such that $\mathsf{P}$ and $\mathsf{D}$ are incompatible in the sense that $\mathsf{D}(\mathsf{P}(a)) = 0$. For the same reasons discussed in Example \ref{symex}, this codifferential category structure admits both free and cofree $\mathsf{RB}$-differential algebras. 
\end{example}

\section{Free $\mathsf{T}$-Differential Algebras}\label{freesec}

In this section we show for a codifferential category with countable coproducts, its category of $\mathsf{T}$-differential algebras is monadic over the codifferential category. Explicitly, this means that we construct a monad $\mathsf{B}$ whose Eilenberg-Moore category is equivalent to the category of $\mathsf{T}$-differential algebras. The construction of the monad is based on the classical construction of free differential algebras found in \cite[Example 8.3]{blute2018differential}. 

Throughout this section, let $\mathbb{X}$ be a codifferential category with algebra modality $(\mathsf{T}, \mu, \eta, \mathsf{m}, \mathsf{u})$ and deriving transformation $\mathsf{d}$, and countable coproducts $\bigoplus$. 

Define the functor $\mathsf{B}: \mathbb{X} \to \mathbb{X}$ on objects as $\mathsf{B}A:= \mathsf{T}\left(\bigoplus\limits^\infty_{n=0} A\right)$ and on maps as $\mathsf{B}(f) := \mathsf{T}\left(\bigoplus\limits^\infty_{n=0} f\right)$. Now define the natural transformation $\mathsf{b}: \bigoplus\limits^\infty_{n=0} A \to \bigoplus\limits^\infty_{n=0} A$ as the unique map such that for each $n \in \mathbb{N}$ the following diagram commutes (where $\iota_k$ are the injection maps):   
\begin{equation}\label{bdef}\begin{gathered} \xymatrixcolsep{5pc} \xymatrixrowsep{2pc}\xymatrix{A \ar[r]^-{\iota_n} \ar[dr]_-{\iota_{n+1}} & \bigoplus\limits^\infty_{n=0} A\ar[d]^-{\mathsf{\mathsf{b}}} \\
& \bigoplus\limits^\infty_{n=0} A
  } \end{gathered}\end{equation}
\begin{lemma}\label{Tfdifalg} For each object $A$, the triple $(\mathsf{B}(A), \mu, \mathsf{b}^\sharp)$ is a $\mathsf{T}$-differential algebra (where recall $\mathsf{b}^\sharp := \mathsf{d}; (1 \otimes \mathsf{b}); \mathsf{d}^\circ$). Furthermore, for every map $f: A \to A^\prime$, $\mathsf{B}(f): (\mathsf{B}A, \mu, \mathsf{b}^\sharp) \to (\mathsf{B}A^\prime, \mu, \mathsf{b}^\sharp)$ is a $\mathsf{T}$-differential algebra morphism. 
\end{lemma}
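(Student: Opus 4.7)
The plan is to observe that the statement is essentially a specialization of Corollary \ref{endoTalg} together with naturality bookkeeping, so there is no deep content to unearth; the work is in lining up the right naturalities.

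\textbf{Step 1: $\mathsf{T}$-differential algebra structure.} I would simply apply Corollary \ref{endoTalg} to the endomorphism $\mathsf{b}: \bigoplus_{n=0}^\infty A \to \bigoplus_{n=0}^\infty A$. Since $\mathsf{b}^\sharp$ as defined in the lemma is literally $(\mathsf{b})^\sharp = \mathsf{d};(1 \otimes \mathsf{b});\mathsf{d}^\circ$ applied to the object $\bigoplus_n A$, the corollary immediately gives that $(\mathsf{T}(\bigoplus_n A),\mu,\mathsf{b}^\sharp) = (\mathsf{B}(A),\mu,\mathsf{b}^\sharp)$ is a $\mathsf{T}$-differential algebra. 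No further calculation is needed.

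\textbf{Step 2: $\mathsf{B}(f)$ is a $\mathsf{T}$-algebra morphism.} Since $\mathsf{B}(f) = \mathsf{T}(\bigoplus_n f)$ is of the form $\mathsf{T}(g)$ for a map $g: \bigoplus_n A \to \bigoplus_n A'$, naturality of $\mu$ immediately shows that $\mathsf{B}(f) : (\mathsf{B}(A),\mu) \to (\mathsf{B}(A'),\mu)$ is a morphism of free $\mathsf{T}$-algebras.

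\textbf{Step 3: Compatibility with the derivation.} I first need the naturality of $\mathsf{b}$ in $A$, namely
\[ \mathsf{b}_A ;\, \textstyle\bigoplus_n f \;=\; \textstyle\bigoplus_n f ;\, \mathsf{b}_{A'}, \]
which I would verify by precomposing with each injection $\iota_n$: both sides send $\iota_n$ to $f;\iota_{n+1}$, using the defining property (\ref{bdef}) of $\mathsf{b}$ and the universal property of the coproduct. Then the required equality $\mathsf{B}(f);\mathsf{b}^\sharp = \mathsf{b}^\sharp;\mathsf{B}(f)$ unfolds to
\[ \mathsf{T}(\textstyle\bigoplus_n f); \mathsf{d}; (1 \otimes \mathsf{b}_{A'}); \mathsf{d}^\circ \;=\; \mathsf{d}; (1 \otimes \mathsf{b}_A); \mathsf{d}^\circ; \mathsf{T}(\textstyle\bigoplus_n f), \]
which follows by pushing $\mathsf{T}(\bigoplus_n f)$ rightward through $\mathsf{d}$ (naturality of the deriving transformation), then through $1 \otimes \mathsf{b}$ (naturality of $\mathsf{b}$ established above), and finally through $\mathsf{d}^\circ$ (naturality of the coderiving transformation from Definition \ref{dcircdef}).

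The only genuine obstacle is the naturality of $\mathsf{b}$, and since $\mathsf{b}$ is defined purely by its action on injections, this is a one-line coproduct argument. Everything else is a mechanical consequence of the naturalities of $\mathsf{d}$, $\mathsf{d}^\circ$, and $\mu$, so the proof is essentially a bookkeeping exercise once Corollary \ref{endoTalg} is in hand.
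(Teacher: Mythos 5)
Your proposal is correct and follows essentially the same route as the paper: invoke Corollary \ref{endoTalg} for the $\mathsf{T}$-differential algebra structure, note that $\mathsf{B}(f)$ is a $\mathsf{T}$-algebra morphism by construction, and then push $\mathsf{T}\left(\bigoplus_n f\right)$ through $\mathsf{d}$, $1 \otimes \mathsf{b}$, and $\mathsf{d}^\circ$ using their respective naturalities. Your explicit verification of the naturality of $\mathsf{b}$ via the injections is a detail the paper uses silently, so your write-up is if anything slightly more complete.
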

\begin{proof} By Corollary \ref{endoTalg}, $(\mathsf{B}(A), \mu, \mathsf{b}^\sharp)$ is a $\mathsf{T}$-differential algebra. To show that $\mathsf{B}(f)$ is a $\mathsf{T}$-differential algebra morphism, notice that $\mathsf{B}(f)$ is already a $\mathsf{T}$-algebra morphism by construction. Therefore, it remains to show that $\mathsf{B}(f)$ commutes with the $\mathsf{T}$-derivations:
\begin{align*}
\mathsf{T}\left(\bigoplus\limits^\infty_{n=0} f\right); \mathsf{b}^\sharp &=~ \mathsf{T}\left(\bigoplus\limits^\infty_{n=0} f\right); \mathsf{d}; (1 \otimes \mathsf{b}); \mathsf{d}^\circ\\
 &=~ \mathsf{d};\left( \mathsf{T}\left(\bigoplus\limits^\infty_{n=0} f\right) \otimes  \bigoplus\limits^\infty_{n=0} f\right); (1 \otimes \mathsf{b}); \mathsf{d}^\circ \tag{Naturality of $\mathsf{d}$} \\
&=~ \mathsf{d}; (1 \otimes \mathsf{b}); \left( \mathsf{T}\left(\bigoplus\limits^\infty_{n=0} f\right) \otimes  \bigoplus\limits^\infty_{n=0} f\right); \mathsf{d}^\circ \tag{Naturality of $\mathsf{b}$} \\
&=~ \mathsf{d}; (1 \otimes \mathsf{b}); \mathsf{d}^\circ; \mathsf{T}\left(\bigoplus\limits^\infty_{n=0} f\right) \tag{Naturality of $\mathsf{d}^\circ$}\\
&=~ \mathsf{b}^\sharp;\mathsf{T}\left(\bigoplus\limits^\infty_{n=0} f\right)
\end{align*}
\end{proof} 

Before defining a monad structure on $\mathsf{B}$, consider first a special map for $\mathsf{T}$-differential algebras in the presence of countable coproducts. For every $\mathsf{T}$-differential algebra $(A, \nu, \mathsf{D})$, define the map $\mathsf{D}^\natural: \bigoplus\limits^\infty_{n=0} A \to A$ as the unique map such that for each $n \in \mathbb{N}$ the following diagram commutes: 
\begin{equation}\label{Dflat0}\begin{gathered} \xymatrixcolsep{5pc}\xymatrixrowsep{4pc}\xymatrix{A \ar[r]^-{\iota_n} \ar[dr]_-{\mathsf{D}^n} & \bigoplus\limits^\infty_{n=0} A\ar[d]^-{\mathsf{D}^\natural} \\
& A
  } \end{gathered}\end{equation}
where recall $\mathsf{D}^0 := 1_A$. The map $\mathsf{D}^\natural$ will be used in the construction of the monad's multiplication. 

\begin{lemma} Let $(A, \nu, \mathsf{D})$ be a $\mathsf{T}$-differential algebra, then the following diagram commutes:
\begin{equation}\label{Dflat1}\begin{gathered} \xymatrixcolsep{5pc}\xymatrix{  \bigoplus\limits^\infty_{n=0} A \ar[d]_-{\mathsf{D}^\natural} \ar[r]^-{\mathsf{b}} & \bigoplus\limits^\infty_{n=0} A \ar[d]^-{\mathsf{D}^\natural} \\
A \ar[r]_-{\mathsf{D}}& A
  } \end{gathered}\end{equation}
  Furthermore, if $f: (A, \nu, \mathsf{D}) \to (B, \nu^\prime, \mathsf{D}^\prime)$ is a $\mathsf{T}$-differential algebra morphism, then following diagram commutes: 
\begin{equation}\label{Dflat2}\begin{gathered} \xymatrixcolsep{5pc}\xymatrix{  \bigoplus\limits^\infty_{n=0} A \ar[d]_-{\mathsf{D}^\natural} \ar[r]^-{\bigoplus\limits^\infty_{n=0} f} & \bigoplus\limits^\infty_{n=0} B \ar[d]^-{(\mathsf{D}^\prime)^\natural} \\
A \ar[r]_-{f}& B
  } \end{gathered}\end{equation}
\end{lemma}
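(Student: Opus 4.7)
The plan is to verify both diagrams by precomposing with the coproduct injections $\iota_n$ and invoking the universal property of the coproduct: two maps out of $\bigoplus\limits^\infty_{n=0} A$ are equal as soon as they agree after precomposition with every $\iota_n$. The definition of $\mathsf{D}^\natural$ in (\ref{Dflat0}) makes such checks essentially mechanical, reducing everything to identities between iterated powers of the given derivations.

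For the first diagram (\ref{Dflat1}), I would precompose both sides with $\iota_n$. Using (\ref{bdef}) on the top path gives $\iota_n;\mathsf{b};\mathsf{D}^\natural = \iota_{n+1};\mathsf{D}^\natural = \mathsf{D}^{n+1}$ by (\ref{Dflat0}). Using (\ref{Dflat0}) on the bottom path gives $\iota_n;\mathsf{D}^\natural;\mathsf{D} = \mathsf{D}^n;\mathsf{D} = \mathsf{D}^{n+1}$. The two sides agree for every $n$, so the diagram commutes.

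For the second diagram (\ref{Dflat2}), I first observe by an easy induction on $n$ that every $\mathsf{T}$-differential algebra morphism $f$ satisfies $\mathsf{D}^n; f = f; (\mathsf{D}^\prime)^n$: the base case $n=0$ is trivial, and the inductive step uses (\ref{Tdiffmap}), namely $\mathsf{D};f = f;\mathsf{D}^\prime$. Then I precompose each side of (\ref{Dflat2}) with $\iota_n$. The top-right path yields $\iota_n; (\bigoplus\limits^\infty_{n=0} f); (\mathsf{D}^\prime)^\natural = f; \iota_n; (\mathsf{D}^\prime)^\natural = f; (\mathsf{D}^\prime)^n$, using the defining property of the coproduct morphism $\bigoplus f$. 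The left-bottom path yields $\iota_n; \mathsf{D}^\natural; f = \mathsf{D}^n; f$, using (\ref{Dflat0}). These agree by the induction.

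Neither step is a real obstacle; the only subtle point is the preliminary induction showing $f$ intertwines all powers of the derivations, which is immediate from the single instance of (\ref{Tdiffmap}). Once that is in hand, the universal property of $\bigoplus$ finishes both diagrams by direct comparison of injection composites.
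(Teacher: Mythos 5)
Your proposal is correct and follows essentially the same route as the paper: both diagrams are verified by precomposing with the injections $\iota_n$ and invoking the couniversal property of the coproduct, with the first reducing to $\iota_{n+1};\mathsf{D}^\natural = \mathsf{D}^{n+1} = \mathsf{D}^n;\mathsf{D}$ and the second to $f;(\mathsf{D}^\prime)^n = \mathsf{D}^n;f$. The only difference is that you spell out the easy induction showing $f$ intertwines all powers of the derivations, which the paper leaves implicit.
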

\begin{proof} For the first square, notice that for each $n \in \mathbb{N}$ we have that: 
\[\iota_n; \mathsf{b}; \mathsf{D}^\natural = \iota_{n+1}; \mathsf{D}^\natural = \mathsf{D}^{n+1} =\mathsf{D}^n; \mathsf{D} = \iota_n; \mathsf{D}^\natural; \mathsf{D} \]
Then by the couniversal property of the coproduct, $\mathsf{b}; \mathsf{D}^\natural = \mathsf{D}^\natural; \mathsf{D}$. For the second square, notice that for each $n \in \mathbb{N}$ we have that: 
\[\iota_n; \bigoplus\limits^\infty_{n=0} f; (\mathsf{D}^\prime)^\natural = f; \iota_n; (\mathsf{D}^\prime)^\natural = f; (\mathsf{D}^\prime)^n =  \mathsf{D}^n; f = \iota_n; \mathsf{D}^\natural; f \]
Then by the couniversal property of the coproduct, $\bigoplus\limits^\infty_{n=0} f; (\mathsf{D}^\prime)^\natural = \mathsf{D}^\natural; f$.
 
\end{proof} 

Define the natural transformations $\alpha: A \to \mathsf{B}A$ and $\beta: \mathsf{B}\mathsf{B}A \to \mathsf{B}A$ respectively as follows: 
\begin{equation}\label{}\begin{gathered} \alpha:= \xymatrixcolsep{5pc}\xymatrix{A \ar[r]^-{\iota_0} & \bigoplus\limits^\infty_{n=0} A \ar[r]^-{\eta} & \mathsf{T}\left(\bigoplus\limits^\infty_{n=0} A\right)
  } \end{gathered}\end{equation}
  \begin{equation}\label{}\begin{gathered}  \beta:= \xymatrixcolsep{3pc}\xymatrix{\mathsf{T}\left(\bigoplus\limits^\infty_{n=0} \mathsf{T}\left(\bigoplus\limits^\infty_{n=0} A\right)\right) \ar[r]^-{\mathsf{T}\left( (b^\sharp)^\natural\right)} & \mathsf{T}\mathsf{T}\left(\bigoplus\limits^\infty_{n=0} A\right) \ar[r]^-{\mu} & \mathsf{T}\left(\bigoplus\limits^\infty_{n=0} A\right)   
  }  \end{gathered}\end{equation}

\begin{lemma}\label{betalem} $\beta: (\mathsf{B}\mathsf{B}A, \mu, \mathsf{b}^\sharp) \to (\mathsf{B}(A), \mu, \mathsf{b}^\sharp)$ is a $\mathsf{T}$- differential algebra morphism. 
\end{lemma}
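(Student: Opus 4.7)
The plan is to factor $\beta = \mathsf{T}((\mathsf{b}^\sharp)^\natural);\mu$ and show it is a $\mathsf{T}$-differential algebra morphism by routing through an intermediate $\mathsf{T}$-differential algebra structure on $\mathsf{T}\mathsf{B}A$. Since $(\mathsf{B}A, \mu, \mathsf{b}^\sharp)$ is a $\mathsf{T}$-differential algebra by Lemma~\ref{Tfdifalg}, Corollary~\ref{endoTalg} applied to the endomorphism $\mathsf{b}^\sharp : \mathsf{B}A \to \mathsf{B}A$ produces the $\mathsf{T}$-differential algebra $(\mathsf{T}\mathsf{B}A, \mu, (\mathsf{b}^\sharp)^\sharp)$, which will serve as the intermediate object.

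First, by Corollary~\ref{dcircv} applied to $(\mathsf{B}A, \mu, \mathsf{b}^\sharp)$, the map $\mu$ is a $\mathsf{T}$-differential algebra morphism $(\mathsf{T}\mathsf{B}A, \mu, (\mathsf{b}^\sharp)^\sharp) \to (\mathsf{B}A, \mu, \mathsf{b}^\sharp)$. So because $\mathsf{T}$-differential algebra morphisms compose, it remains only to verify that
\[ \mathsf{T}((\mathsf{b}^\sharp)^\natural) : (\mathsf{B}\mathsf{B}A, \mu, \mathsf{b}^\sharp) \to (\mathsf{T}\mathsf{B}A, \mu, (\mathsf{b}^\sharp)^\sharp) \]
is itself a $\mathsf{T}$-differential algebra morphism.

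That $\mathsf{T}((\mathsf{b}^\sharp)^\natural)$ is a morphism of free $\mathsf{T}$-algebras follows immediately from the second monad law, since $\mathsf{T}(f)$ is always a morphism of free $\mathsf{T}$-algebras. For compatibility with derivations, unfolding the definitions (\ref{fsharp}) of $\mathsf{b}^\sharp$ on $\mathsf{B}\mathsf{B}A$ and $(\mathsf{b}^\sharp)^\sharp$ on $\mathsf{T}\mathsf{B}A$, the required identity is
\[ \mathsf{d}; (1 \otimes \mathsf{b}); \mathsf{d}^\circ; \mathsf{T}((\mathsf{b}^\sharp)^\natural) \;=\; \mathsf{T}((\mathsf{b}^\sharp)^\natural); \mathsf{d}; (1 \otimes \mathsf{b}^\sharp); \mathsf{d}^\circ. \]
Starting from the left-hand side, naturality of $\mathsf{d}^\circ$ rewrites $\mathsf{d}^\circ; \mathsf{T}((\mathsf{b}^\sharp)^\natural)$ as $(\mathsf{T}((\mathsf{b}^\sharp)^\natural) \otimes (\mathsf{b}^\sharp)^\natural); \mathsf{d}^\circ$; combining the freshly introduced $(\mathsf{b}^\sharp)^\natural$ on the right tensor factor with the pre-existing $\mathsf{b}$ and applying the first square of (\ref{Dflat1}), namely $\mathsf{b}; (\mathsf{b}^\sharp)^\natural = (\mathsf{b}^\sharp)^\natural; \mathsf{b}^\sharp$, splits the middle term into $(1 \otimes (\mathsf{b}^\sharp)^\natural); (1 \otimes \mathsf{b}^\sharp)$. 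A final use of naturality of $\mathsf{d}$ pushes $\mathsf{T}((\mathsf{b}^\sharp)^\natural)$ to the front, yielding the right-hand side.

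The main obstacle is really just the bookkeeping: $\mathsf{d}$ and $\mathsf{d}^\circ$ appear at three different objects ($\bigoplus \mathsf{B}A$, $\mathsf{B}A$, and implicitly through the passage between them), and one must keep straight which instance of $\mathsf{b}$ is the shift on $\bigoplus \mathsf{B}A$ versus the endomorphism $\mathsf{b}^\sharp$ on $\mathsf{B}A$. Once the intermediate $\mathsf{T}$-differential algebra $(\mathsf{T}\mathsf{B}A, \mu, (\mathsf{b}^\sharp)^\sharp)$ is identified, both factors of $\beta$ yield to standard naturality arguments together with the single exchange identity from (\ref{Dflat1}).
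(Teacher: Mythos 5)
Your proof is correct and follows essentially the same route as the paper: the paper's own argument is a single equational chain that implicitly performs your factorization $\beta = \mathsf{T}\left((\mathsf{b}^\sharp)^\natural\right);\mu$, invoking Corollary~\ref{dcircv} to commute $\mu$ past the derivations and then naturality of $\mathsf{d}$, the identity (\ref{Dflat1}), and naturality of $\mathsf{d}^\circ$ in exactly the order you use (read in reverse). Making the intermediate $\mathsf{T}$-differential algebra $(\mathsf{T}\mathsf{B}A, \mu, (\mathsf{b}^\sharp)^\sharp)$ explicit is a presentational choice, not a different argument.
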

\begin{proof} By construction, $\beta$ is a $\mathsf{T}$-algebra morphism. Therefore, it remains to check that $\beta$ commutes with the $\mathsf{T}$-derivations. 
\begin{align*}
\beta; \mathsf{b}^\sharp &=~ \mathsf{T}\left( (b^\sharp)^\natural\right); \mu; b^\sharp \\
&=~ \mathsf{T}\left( (b^\sharp)^\natural\right); (\mathsf{b}^\sharp)^\sharp; \mu \tag{Corollary \ref{dcircv}}\\
&=~ \mathsf{T}\left( (b^\sharp)^\natural\right);\mathsf{d}; (1 \otimes \mathsf{b}^\sharp); \mathsf{d}^\circ; \mu \\
&=~ \mathsf{d}; \left( \mathsf{T}\left( (b^\sharp)^\natural\right) \otimes (b^\sharp)^\natural \right);  (1 \otimes \mathsf{b}^\sharp); \mathsf{d}^\circ; \mu \tag{Naturality of $\mathsf{d}$}\\
&=~ \mathsf{d}; (1 \otimes \mathsf{b}); \left( \mathsf{T}\left( (b^\sharp)^\natural\right) \otimes (b^\sharp)^\natural \right); \mathsf{d}^\circ; \mu \tag{\ref{Dflat1}}\\
&=~ \mathsf{d}; (1 \otimes \mathsf{b}); \mathsf{d}^\circ; \mathsf{T}\left( (b^\sharp)^\natural\right); \mu \tag{Naturality of $\mathsf{d}^\circ$}\\
&=~ \mathsf{b}^\sharp; \beta
\end{align*}
\end{proof} 
  
\begin{proposition}\label{monadprop} $(\mathsf{B}, \beta, \alpha)$ is a monad. 
\end{proposition}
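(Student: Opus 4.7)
The plan is to verify the two unit laws and the associativity axiom for the putative monad $(\mathsf{B}, \beta, \alpha)$. Naturality of $\alpha$ and $\beta$ comes along for free: $\alpha$ is built from the natural transformations $\iota_0$ and $\eta$, while $\beta$ reduces, via naturality of $\mu$, to the identity $(\mathsf{b}^\sharp)^\natural; \mathsf{B}(f) = \bigoplus_n \mathsf{B}(f); (\mathsf{b}^\sharp)^\natural$, which is precisely (\ref{Dflat2}) applied to the $\mathsf{T}$-differential algebra morphism $\mathsf{B}(f)$ of Lemma \ref{Tfdifalg}.

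For the left unit $\alpha; \beta = 1$, I would unfold the composite to $\iota_0; \eta; \mathsf{T}((\mathsf{b}^\sharp)^\natural); \mu$, push $\mathsf{T}((\mathsf{b}^\sharp)^\natural)$ past $\eta$ by naturality of $\eta$, and collapse $\eta; \mu = 1$ to obtain $\iota_0; (\mathsf{b}^\sharp)^\natural$. By the defining diagram (\ref{Dflat0}) and the convention $(\mathsf{b}^\sharp)^0 = 1$, this is the identity.

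For the right unit $\mathsf{B}(\alpha); \beta = 1$, functoriality of $\mathsf{T}$ rewrites the composite as $\mathsf{T}\bigl(\bigoplus_n \alpha; (\mathsf{b}^\sharp)^\natural\bigr); \mu$, so it suffices to show $\bigoplus_n \alpha; (\mathsf{b}^\sharp)^\natural = \eta$ and then invoke $\mathsf{T}(\eta); \mu = 1$. Testing with each injection $\iota_k$, this reduces to $\iota_0; \eta; (\mathsf{b}^\sharp)^k = \iota_k; \eta$. The core substep is $\eta; \mathsf{b}^\sharp = \mathsf{b}; \eta$: using the linear rule \textbf{[d.3]} (so that $\eta; \mathsf{d} = \mathsf{u} \otimes 1$), the explicit formula $\mathsf{b}^\sharp = \mathsf{d}; (1 \otimes \mathsf{b}); \mathsf{d}^\circ$ from (\ref{fsharp}), and the monoid unit law for $(\mathsf{T}A, \mathsf{m}, \mathsf{u})$, one simplifies $\eta; \mathsf{b}^\sharp$ directly to $\mathsf{b}; \eta$. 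Iterating gives $\eta; (\mathsf{b}^\sharp)^k = \mathsf{b}^k; \eta$, and the defining property (\ref{bdef}) of $\mathsf{b}$ yields $\iota_0; \mathsf{b}^k = \iota_k$, closing the computation.

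For associativity $\beta_{\mathsf{B}A}; \beta_A = \mathsf{B}(\beta_A); \beta_A$, I would rewrite both sides in the form $\mathsf{T}(\varphi); \mu$. Expanding $\beta_{\mathsf{B}A}; \beta_A$, then sliding $\mu$ past $\mathsf{T}((\mathsf{b}^\sharp_A)^\natural)$ by naturality of $\mu$ and contracting the resulting $\mu;\mu$ by the associativity axiom of $(\mathsf{T}, \mu, \eta)$, produces $\mathsf{T}\bigl((\mathsf{b}^\sharp_{\mathsf{B}A})^\natural; \beta_A\bigr); \mu$; meanwhile $\mathsf{B}(\beta_A); \beta_A$ is directly $\mathsf{T}\bigl(\bigoplus_n \beta_A; (\mathsf{b}^\sharp_A)^\natural\bigr); \mu$. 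Equality of the two maps inside $\mathsf{T}(-)$ is then checked by precomposing with each $\iota_n$: the two sides become $(\mathsf{b}^\sharp_{\mathsf{B}A})^n; \beta_A$ and $\beta_A; (\mathsf{b}^\sharp_A)^n$ respectively. The base case $n=1$ is the identity $\mathsf{b}^\sharp_{\mathsf{B}A}; \beta_A = \beta_A; \mathsf{b}^\sharp_A$ supplied by Lemma \ref{betalem}, and a trivial induction handles all $n$. The main obstacle is precisely this associativity step---the correct interleaving of naturality and associativity of $\mu$ needed to expose a position where Lemma \ref{betalem} can be invoked; once that reorganization is in place, the remaining work reduces to the couniversal property of $\bigoplus$ together with iteration of a single commuting square.
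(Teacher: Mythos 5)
Your proposal is correct and follows essentially the same route as the paper: naturality and associativity both hinge on the commutation of $(\mathsf{b}^\sharp)^\natural$ with the relevant $\mathsf{T}$-differential algebra morphisms (Lemma \ref{Tfdifalg} and Lemma \ref{betalem} via (\ref{Dflat2})), the left unit law on naturality of $\eta$, and the right unit law on the identity $\bigoplus\limits^\infty_{n=0}\alpha; (\mathsf{b}^\sharp)^\natural = \eta$ checked on injections. You even make explicit the substep $\eta; \mathsf{b}^\sharp = \mathsf{b}; \eta$ that the paper uses silently.
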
   
\begin{proof} We first show that $\alpha$ and $\beta$ are indeed natural transformations. Naturality of $\alpha$ is straightforward: 
\[f; \alpha = f; \iota_0; \eta = \iota_0; \bigoplus\limits^\infty_{n=0} f; \eta = \iota_0; \eta; \mathsf{T}\left( \bigoplus\limits^\infty_{n=0} f \right) = \alpha; \mathsf{B}(f)  \]
To show the naturality of $\beta$, we use the fact that $\mathsf{B}(f)$ is a $\mathsf{T}$-differential algebra morphism: 
\begin{align*}
\mathsf{B}\mathsf{B}(f); \beta &=~ \mathsf{T}\left(\bigoplus\limits^\infty_{n=0} \mathsf{T}\left(\bigoplus\limits^\infty_{n=0} f\right)\right); \mathsf{T}\left( (b^\sharp)^\natural\right); \mu \\
&=~ \mathsf{T}\left( (b^\sharp)^\natural\right);  \mathsf{T}\mathsf{T}\left(\bigoplus\limits^\infty_{n=0} f\right); \mu \tag{Lemma \ref{Tfdifalg} + (\ref{Dflat2})} \\
&=~ \mathsf{T}\left( (b^\sharp)^\natural\right); \mu; \mathsf{T}\left(\bigoplus\limits^\infty_{n=0} f\right) \tag{Naturality of $\mu$}\\
&=~ \beta; \mathsf{B}(f)
\end{align*}
Now we show that the three monad identities (\ref{monadeq}) hold. 
\begin{enumerate}
\item $\alpha; \beta =1$: This is straightforward: 
\[\alpha; \beta = \iota_0; \eta; \mathsf{T}\left( (b^\sharp)^\natural\right); \mu = \iota_0; (b^\sharp)^\flat; \eta; \mu = (b^\sharp)^0; 1 = 1 \]
\item $\mathsf{B}(\alpha);\beta=1$: First notice that for each $n \in \mathbb{N}$ we have the following equality: 
\begin{align*}
\iota_n; \bigoplus\limits^\infty_{n=0}\iota_0; \bigoplus\limits^\infty_{n=0}\eta; (\mathsf{b}^\sharp)^\natural= \iota_0; \eta; \iota_n; (\mathsf{b}^\sharp)^\natural = \iota_0; \eta; (\mathsf{b}^\sharp)^n = \iota_0; \mathsf{b}^n; \eta= Ê\iota_n; \eta
\end{align*}
So by the couniversal property of the coproduct, $\bigoplus\limits^\infty_{n=0}\iota_0; \bigoplus\limits^\infty_{n=0}\eta; (\mathsf{b}^\sharp)^\natural = \eta$. From this equality, it then follows that: 
\[\mathsf{B}(\alpha);\beta = \mathsf{T}\left(\bigoplus\limits^\infty_{n=0} \iota_0 \right); \mathsf{T}\left(\bigoplus\limits^\infty_{n=0} \eta \right); \mathsf{T}\left( (b^\sharp)^\natural\right); \mu = \mathsf{T}(\eta);\mu = 1 \]
\item $\mathsf{B}(\beta); \beta = \beta; \beta$: Here we use the fact that $\beta$ is a $\mathsf{T}$-differential algebra morphism. 
\begin{align*}
\mathsf{B}(\beta); \beta &=~ \mathsf{T}\left(\bigoplus\limits^\infty_{n=0} \beta \right); \mathsf{T}\left( (b^\sharp)^\natural\right); \mu \\
&=~ \mathsf{T}\left( (b^\sharp)^\natural\right); \mathsf{T}(\beta); \mu \tag{Lemma \ref{betalem} + (\ref{Dflat2})} \\
&=~ \mathsf{T}\left( (b^\sharp)^\natural\right); \mathsf{T}\mathsf{T}\left( (b^\sharp)^\natural\right); \mathsf{T}(\mu); \mu \\
&=~ \mathsf{T}\left( (b^\sharp)^\natural\right); \mathsf{T}\mathsf{T}\left( (b^\sharp)^\natural\right); \mu; \mu \tag{\ref{monadeq}} \\
&=~ \mathsf{T}\left( (b^\sharp)^\natural\right); \mu; \mathsf{T}\left( (b^\sharp)^\natural\right); \mu \tag{Naturality of $\mu$} \\
&=~ \beta;\beta 
\end{align*} 
\end{enumerate}
\end{proof} 

We now wish to show that the Eilenberg-Moore category of $\mathsf{B}$ is equivalent to category of $\mathsf{T}$-differential algebras. To do so, we will show that $\mathsf{B}$-algebras are precisely the $\mathsf{T}$-differential algebras and that $\mathsf{B}$-algebra morphisms are precisely $\mathsf{T}$-differential algebra morphisms. 

\begin{lemma}\label{BalgDalg} Let $(A, \chi)$ be a $\mathsf{B}$-algebra. Define the maps $\nu^\chi: \mathsf{T}A \to A$ and $\mathsf{D}^\chi: A \to A$ respectively as follows: 
\begin{equation}\label{}\begin{gathered} \nu^\chi := \xymatrix{\mathsf{T}A \ar[r]^-{\mathsf{T}(\iota_0)} & \mathsf{T}\left(\bigoplus\limits^\infty_{n=0} A\right) \ar[r]^-{\chi} & A
  } \quad \quad \mathsf{D}^\chi := \xymatrix{A \ar[r]^-{\eta} & \mathsf{T}A \ar[r]^-{\mathsf{T}(\iota_1)} & \mathsf{T}\left(\bigoplus\limits^\infty_{n=0} A\right) \ar[r]^-{\chi} & A
  } \end{gathered}\end{equation}
  Then $(A, \nu^\chi, \mathsf{D}^\chi)$ is a $\mathsf{T}$-differential algebra. Furthermore, if $f: (A, \chi) \to (A^\prime, \chi^\prime)$ is a $\mathsf{B}$-algebra morphism, then $f: (A, \nu^\chi, \mathsf{D}^\chi) \to (A^\prime, \nu^{\chi^\prime}, \mathsf{D}^{\chi^\prime})$ is a $\mathsf{T}$-differential algebra morphism. 
\end{lemma}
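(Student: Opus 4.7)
The plan is to first verify that $(A, \nu^\chi)$ satisfies the $\mathsf{T}$-algebra axioms, then show that $\chi : \mathsf{B}A \to A$ intertwines the $\mathsf{T}$-differential structure on $(\mathsf{B}A, \mu, \mathsf{b}^\sharp)$ from Lemma \ref{Tfdifalg} with the candidate structure on $A$, and finally use the fact that $\alpha;\chi = 1$ (so that $\mathsf{T}(\chi)$ is split epi via $\mathsf{T}(\alpha)$) to pull the chain rule axiom (\ref{Tdiffalg}) back from $\mathsf{B}A$ to $(A, \nu^\chi, \mathsf{D}^\chi)$. The morphism statement will then fall out of naturality.

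The two key numerical identities driving everything come from the defining property (\ref{Dflat0}) of $(-)^\natural$, namely $\iota_0;(\mathsf{b}^\sharp)^\natural = (\mathsf{b}^\sharp)^0 = 1$ and $\iota_1;(\mathsf{b}^\sharp)^\natural = (\mathsf{b}^\sharp)^1 = \mathsf{b}^\sharp$. The unit axiom is immediate: $\eta;\nu^\chi = \iota_0;\eta;\chi = \alpha;\chi = 1$. For associativity, I would expand $\mathsf{T}(\nu^\chi);\nu^\chi = \mathsf{T}\mathsf{T}(\iota_0);\mathsf{T}(\chi);\mathsf{T}(\iota_0);\chi$, use naturality of $\iota_0$ to rewrite $\mathsf{T}(\chi);\mathsf{T}(\iota_0) = \mathsf{T}(\iota_0);\mathsf{T}(\bigoplus \chi)$, then apply the $\mathsf{B}$-algebra associativity $\mathsf{B}(\chi);\chi = \beta;\chi$ to turn $\mathsf{T}(\bigoplus \chi);\chi$ into $\mathsf{T}((\mathsf{b}^\sharp)^\natural);\mu;\chi$, and finally collapse $\mathsf{T}(\iota_0;(\mathsf{b}^\sharp)^\natural) = \mathsf{T}(1) = 1$ via the first identity; naturality of $\mu$ then produces the desired $\mu;\nu^\chi$.

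A parallel computation using the second identity $\iota_1;(\mathsf{b}^\sharp)^\natural = \mathsf{b}^\sharp$ in place of the first yields $\chi;\mathsf{D}^\chi = \mathsf{b}^\sharp;\chi$. Since the associativity axiom just proved rearranges into the intertwining $\mu;\chi = \mathsf{T}(\chi);\nu^\chi$, the discussion following (\ref{Talgmonu}) makes $\chi$ a monoid morphism from $(\mathsf{B}A,\mathsf{m},\mathsf{u})$ to $(A,\mathsf{m}^{\nu^\chi},\mathsf{u}^{\nu^\chi})$. Postcomposing the chain rule $\mu;\mathsf{b}^\sharp = \mathsf{d};(\mu \otimes \mathsf{b}^\sharp);\mathsf{m}$ for $(\mathsf{B}A, \mu, \mathsf{b}^\sharp)$ with $\chi$, and using both intertwinings together with naturality of $\mathsf{d}$ on the right-hand side, gives $\mathsf{T}(\chi);\nu^\chi;\mathsf{D}^\chi = \mathsf{T}(\chi);\mathsf{d};(\nu^\chi \otimes \mathsf{D}^\chi);\mathsf{m}^{\nu^\chi}$. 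Precomposing by $\mathsf{T}(\alpha)$ cancels the $\mathsf{T}(\chi)$ prefix and establishes (\ref{Tdiffalg}) for $(A,\nu^\chi,\mathsf{D}^\chi)$.

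For the morphism claim, both $\mathsf{T}(f);\nu^{\chi'} = \nu^\chi;f$ and $f;\mathsf{D}^{\chi'} = \mathsf{D}^\chi;f$ are short diagram chases: slide $f$ past $\iota_0$ (respectively $\iota_1$) via naturality of the coproduct injections, past $\eta$ via naturality of the unit, and close up using the $\mathsf{B}$-algebra morphism condition $\mathsf{B}(f);\chi' = \chi;f$. The only genuine obstacle anywhere in the argument is the descent of the chain rule from $\mathsf{B}A$ to $A$; this is exactly where the splitness of $\chi$ (inherited by $\mathsf{T}(\chi)$ via $\mathsf{T}(\alpha);\mathsf{T}(\chi) = 1$) is essential.
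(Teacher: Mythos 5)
Your proposal is correct, and it reaches the chain rule (\ref{Tdiffalg}) by a genuinely different route from the paper. The paper verifies $\nu^\chi;\mathsf{D}^\chi = \mathsf{d};(\nu^\chi\otimes\mathsf{D}^\chi);\mathsf{m}^{\nu^\chi}$ by one long direct computation that unfolds $\mathsf{b}^\sharp = \mathsf{d};(1\otimes\mathsf{b});\mathsf{d}^\circ$ and uses $\iota_0;\mathsf{b}=\iota_1$ from (\ref{bdef}); it never invokes the fact that $(\mathsf{B}A,\mu,\mathsf{b}^\sharp)$ is itself a $\mathsf{T}$-differential algebra, and it proves ``$\chi$ preserves the multiplication'' by hand from the monoid-morphism properties of $\beta$ and $\mathsf{B}(\chi)$. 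You instead isolate the two intertwinings $\mu;\chi = \mathsf{T}(\chi);\nu^\chi$ and $\mathsf{b}^\sharp;\chi = \chi;\mathsf{D}^\chi$, cite Lemma \ref{Tfdifalg} for the chain rule on $\mathsf{B}A$, and descend it along the split epimorphism $\mathsf{T}(\chi)$ (section $\mathsf{T}(\alpha)$, from $\alpha;\chi=1$); the monoid-morphism property of $\chi$ then comes for free from the paper's general remark following (\ref{Talgmonu}). This is cleaner conceptually and makes transparent \emph{why} the descent works, at the cost of needing the splitting argument that the paper avoids by precomposing with $\mathsf{T}(\iota_0)$ from the start. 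One small wording quibble: the intertwining $\mu;\chi=\mathsf{T}(\chi);\nu^\chi$ is a statement about maps out of $\mathsf{T}\mathsf{B}A$, so it is not literally a ``rearrangement'' of the associativity axiom $\mathsf{T}(\nu^\chi);\nu^\chi=\mu;\nu^\chi$ (maps out of $\mathsf{T}\mathsf{T}A$); rather, it is proved by the identical computation with the leading $\mathsf{T}\mathsf{T}(\iota_0)$ removed (and in fact implies associativity, so the logical order is most naturally reversed). This is not a gap, only a matter of presentation; the morphism half of your argument coincides with the paper's.
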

\begin{proof} We first show that $\nu^\chi$ satisfies the $\mathsf{T}$-algebra identities (\ref{Talgdef}):
\begin{align*}
\eta; \nu^\chi = \eta; \mathsf{T}(\iota_0); \chi = \iota_0; \eta; \chi  = \alpha; \chi = 1 
\end{align*}
\begin{align*}
\mathsf{T}(\nu^\chi); \nu^\chi &=~\mathsf{T}\mathsf{T}(\iota_0); \mathsf{T}(\chi); \mathsf{T}(\iota_0); \chi \\
&=~ \mathsf{T}\mathsf{T}(\iota_0); \mathsf{T}(\iota_0); \mathsf{T}\left(\bigoplus\limits^\infty_{n=0} \chi \right);   \chi \\
&=~ \mathsf{T}\mathsf{T}(\iota_0); \mathsf{T}(\iota_0); \mathsf{B}(\chi) ;   \chi \\
&=~ \mathsf{T}\mathsf{T}(\iota_0); \mathsf{T}(\iota_0); \beta ;   \chi \tag{\ref{Talgdef}} \\
&=~ \mathsf{T}\mathsf{T}(\iota_0); \mathsf{T}(\iota_0); \mathsf{T}\left( (b^\sharp)^\natural\right); \mu; \chi \\
&=~ \mathsf{T}\mathsf{T}(\iota_0); \mu; \chi \tag{\ref{Dflat1}} \\
&=~ \mu; \mathsf{T}(\iota_0); \chi \tag{Naturality of $\mu$} \\
&=~  \mu; \nu^\chi
\end{align*}
Next we show that $\mathsf{D}^\chi$ is a $\mathsf{T}$-derivation. First observe that $\chi$ preserves the multiplication in the following sense: 
\begin{align*}
\mathsf{m}; \chi &=~ (\alpha \otimes \alpha); (\beta \otimes \beta); \mathsf{m}; \chi \tag{\ref{Talgdef}} \\
&=~ (\alpha \otimes \alpha); \mathsf{m}; \beta; \chi \tag{$\beta$ is a monoid morphism} \\
&=~ (\alpha \otimes \alpha); \mathsf{m}; \mathsf{B}(\chi); \chi \tag{\ref{Talgdef}} \\
&=~ (\alpha \otimes \alpha); (\mathsf{B}(\chi) \otimes \mathsf{B}(\chi)); \mathsf{m}; \chi \tag{$\mathsf{B}(\chi)$ is a monoid morphism} \\
&=~Ê(\chi \otimes \chi); (\alpha \otimes \alpha); \mathsf{m}; \chi \tag{Naturality of $\alpha$} \\
&=~ (\chi \otimes \chi); (\eta \otimes \eta); (\mathsf{T}(\iota_0) \otimes \mathsf{T}(\iota_0)); \mathsf{m}; \chi \\
&=~ (\chi \otimes \chi); (\eta \otimes \eta); \mathsf{m}; \mathsf{T}(\iota_0); \chi \tag{Naturality of $\mathsf{m}$}\\
&=~ (\chi \otimes \chi);  (\eta \otimes \eta); \mathsf{m}; \nu^\chi \\
&=~ (\chi \otimes \chi); \mathsf{m}^{\nu^\chi}
\end{align*}
Now we show that $\mathsf{D}^\chi$ satisfies (\ref{Tdiffalg}): 
\begin{align*}
\nu^\chi; \mathsf{D}^\chi &=~ \mathsf{T}(\iota_0); \chi; \eta; \mathsf{T}(\iota_1); \chi \\
&=~ \mathsf{T}(\iota_0); \eta; \mathsf{T}(\iota_1); \mathsf{T}\left(\bigoplus\limits^\infty_{n=0} \chi \right); \chi \tag{Naturality of $\eta$ and $\iota_1$} \\
&=~Ê\mathsf{T}(\iota_0); \eta; \mathsf{T}(\iota_1); \mathsf{B}(\chi); \chi \\
&=~Ê\mathsf{T}(\iota_0); \eta; \mathsf{T}(\iota_1); \beta; \chi \tag{\ref{Talgdef}} \\
&=~ \mathsf{T}(\iota_0); \eta; \mathsf{T}(\iota_1);  \mathsf{T}\left( (\mathsf{b}^\sharp)^\natural\right); \mu; \chi \\
&=~Ê\mathsf{T}(\iota_0); \eta; \mathsf{T}(\mathsf{b}^\sharp); \mu; \chi \tag{\ref{Dflat0}} \\
&=~Ê\mathsf{T}(\iota_0); \mathsf{b}^\sharp; \eta; \mu; \chi \tag{Naturality of $\eta$} \\
&=~Ê\mathsf{T}(\iota_0); \mathsf{b}^\sharp; \chi \tag{\ref{Talgdef}} \\
&=~Ê\mathsf{T}(\iota_0); \mathsf{d}; (1 \otimes \mathsf{b}); \mathsf{d}^\circ; \chi \\
&=~Ê\mathsf{d}; (\mathsf{T}(\iota_0) \otimes \iota_0); (1 \otimes \mathsf{b}); \mathsf{d}^\circ; \chi \\
&=~ Ê\mathsf{d}; (\mathsf{T}(\iota_0) \otimes \iota_1); \mathsf{d}^\circ; \chi \tag{\ref{bdef}} \\
&=~ \mathsf{d}; (\mathsf{T}(\iota_0) \otimes \iota_1); (1 \otimes \eta); \mathsf{m}; \chi \\
&=~ \mathsf{d}; (\mathsf{T}(\iota_0) \otimes \iota_1); (1 \otimes \eta); (\chi \otimes \chi); \mathsf{m}^{\nu^\chi} \tag{$\chi$ preserves the multiplication} \\
&=~ \mathsf{d}; (\nu^\chi  \otimes \mathsf{D}^\chi); \mathsf{m}^{\nu^\chi}
\end{align*}  
Therefore $(A, \nu^\chi, \mathsf{D}^\chi)$ is a $\mathsf{T}$-differential algebra. Now suppose that $f$ is a $\mathsf{B}$-algebra morphism. We first show that $f$ is a $\mathsf{T}$-algebra morphism:
\[\mathsf{T}(f);  \nu^{\chi^\prime} = \mathsf{T}(f); \mathsf{T}(\iota_0); \chi^\prime= \mathsf{T}(\iota_0); \mathsf{T}\left(\bigoplus\limits^\infty_{n=0} f \right) ; \chi^\prime = \mathsf{T}(\iota_0); \mathsf{B}(f);  \chi^\prime= \mathsf{T}(\iota_0); \chi  ; f = \nu^\chi; f  \]
And finally we show that $f$ commutes with the $\mathsf{T}$-derivations: 
\begin{align*}
f; \mathsf{D}^{\chi^\prime} &=~ f; \eta; \mathsf{T}(\iota_1); \chi^\prime =Ê\eta; \mathsf{T}(\iota_1); \mathsf{T}\left(\bigoplus\limits^\infty_{n=0} f \right) ; \chi^\prime =Ê\eta; \mathsf{T}(\iota_1); \mathsf{B}(f);  \chi^\prime = \eta; \mathsf{T}(\iota_1); \chi  ; f = \mathsf{D}^\chi; f
\end{align*}
\end{proof} 

In order to prove the converse of Lemma \ref{BalgDalg}, we will require the following observation: 

\begin{lemma}\label{DBlem} If $(A, \nu, \mathsf{D})$ is a $\mathsf{T}$-differential algebra, then $\mathsf{T}(\mathsf{D}^\natural): (\mathsf{B}A, \mu, \mathsf{b}^\sharp) \to (\mathsf{T}A, \mu, \mathsf{D}^\sharp)$ is a $\mathsf{T}$-differential algebra morphism. 
\end{lemma}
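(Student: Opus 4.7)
The plan is to verify the two conditions required for a $\mathsf{T}$-differential algebra morphism: first that $\mathsf{T}(\mathsf{D}^\natural)$ is a $\mathsf{T}$-algebra morphism between the free algebras $(\mathsf{B}A, \mu)$ and $(\mathsf{T}A, \mu)$, and second that it intertwines the two $\mathsf{T}$-derivations $\mathsf{b}^\sharp$ and $\mathsf{D}^\sharp$. The first condition is essentially free: any map of the form $\mathsf{T}(h)$ between free $\mathsf{T}$-algebras is automatically a $\mathsf{T}$-algebra morphism, by naturality of $\mu$. So the substance of the lemma lies in the second condition.

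For the second condition, I would unfold the definitions of $\mathsf{b}^\sharp$ and $\mathsf{D}^\sharp$ via Corollary~\ref{endoTalg} as $\mathsf{d};(1\otimes\mathsf{b});\mathsf{d}^\circ$ and $\mathsf{d};(1\otimes\mathsf{D});\mathsf{d}^\circ$ respectively, and then push $\mathsf{T}(\mathsf{D}^\natural)$ past each piece in turn. Starting from $\mathsf{T}(\mathsf{D}^\natural);\mathsf{D}^\sharp$, naturality of the deriving transformation $\mathsf{d}$ moves $\mathsf{T}(\mathsf{D}^\natural)$ inside the tensor, yielding $\mathsf{d};(\mathsf{T}(\mathsf{D}^\natural)\otimes\mathsf{D}^\natural);(1\otimes\mathsf{D});\mathsf{d}^\circ$. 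The middle factor can be rewritten as $\mathsf{D}^\natural;\mathsf{D}$, which by equation~(\ref{Dflat1}) equals $\mathsf{b};\mathsf{D}^\natural$. Substituting and rearranging gives $\mathsf{d};(1\otimes\mathsf{b});(\mathsf{T}(\mathsf{D}^\natural)\otimes\mathsf{D}^\natural);\mathsf{d}^\circ$, and then naturality of the coderiving transformation $\mathsf{d}^\circ$ (from Definition~\ref{dcircdef}) pulls $\mathsf{T}(\mathsf{D}^\natural)$ back outside on the right, producing $\mathsf{b}^\sharp;\mathsf{T}(\mathsf{D}^\natural)$ as desired.

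The only non-trivial ingredient is the identity $\mathsf{D}^\natural;\mathsf{D}=\mathsf{b};\mathsf{D}^\natural$ of (\ref{Dflat1}), which says that $\mathsf{D}^\natural$ already mediates between the shift $\mathsf{b}$ and the derivation $\mathsf{D}$ at the level of the underlying coproduct; functoriality of $\mathsf{T}$ then combined with naturality of $\mathsf{d}$ and $\mathsf{d}^\circ$ lifts this to the level of the associated $\mathsf{T}$-derivations $\mathsf{b}^\sharp$ and $\mathsf{D}^\sharp$. I do not anticipate a genuine obstacle here; the main thing to be careful about is bookkeeping in the tensor factors when applying naturality of $\mathsf{d}$ and $\mathsf{d}^\circ$, since the endomorphism $\mathsf{D}$ (and correspondingly $\mathsf{b}$) appears on the right-hand tensorand while $\mathsf{T}(\mathsf{D}^\natural)$ sits on the left.
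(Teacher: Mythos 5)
Your proposal is correct and follows essentially the same route as the paper's proof: the paper likewise dismisses the $\mathsf{T}$-algebra morphism condition as holding by construction and then establishes $\mathsf{b}^\sharp;\mathsf{T}(\mathsf{D}^\natural)=\mathsf{T}(\mathsf{D}^\natural);\mathsf{D}^\sharp$ by exactly the chain of naturality of $\mathsf{d}^\circ$, the identity (\ref{Dflat1}), and naturality of $\mathsf{d}$ that you describe (merely written in the reverse direction).
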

\begin{proof} By construction, $\mathsf{T}(\mathsf{D}^\natural)$ is a $\mathsf{T}$-algebra morphism. Therefore, we must simply check that $\mathsf{T}(\mathsf{D}^\natural)$ commutes with the $\mathsf{T}$-derivations:
\begin{align*}
\mathsf{b}^\sharp; \mathsf{T}(\mathsf{D}^\natural)&=~ \mathsf{d}; (1 \otimes \mathsf{b}); \mathsf{d}^\circ; \mathsf{T}(\mathsf{D}^\natural) \\
&=~  \mathsf{d}; (1 \otimes \mathsf{b}); (\mathsf{T}(\mathsf{D}^\natural) \otimes \mathsf{D}^\natural); \mathsf{d}^\circ \tag{Naturality of $\mathsf{d}^\circ$} \\
&=~ \mathsf{d}; (\mathsf{T}(\mathsf{D}^\natural) \otimes \mathsf{D}^\natural); (1 \otimes \mathsf{D});\mathsf{d}^\circ \tag{\ref{Dflat1}}\\
&=~ \mathsf{T}(\mathsf{D}^\natural); \mathsf{d}; (1 \otimes \mathsf{b}); \mathsf{d}^\circ  \tag{Naturality of $\mathsf{d}$}\\
&=~ \mathsf{T}(\mathsf{D}^\natural); \mathsf{D}^\sharp
\end{align*}
\end{proof} 

\begin{lemma}\label{DalgBalg} Let $(A, \nu, \mathsf{D})$ be a $\mathsf{T}$-differential algebra. Define the map $\chi^{(\nu,\mathsf{D})}: \mathsf{B}A \to A$ as follows:
\begin{equation}\label{}\begin{gathered} \xymatrixcolsep{5pc}\xymatrix{  \mathsf{T}\left(\bigoplus\limits^\infty_{n=0} A\right) \ar[r]^-{\mathsf{T}(\mathsf{D}^\natural)} & \mathsf{T}(A) \ar[r]^-{\nu} & A
  } \end{gathered}\end{equation}
  Then $(A, \chi^{(\nu,\mathsf{D})})$ is a $\mathsf{B}$-algebra. Furthermore, if $f: (A, \nu, \mathsf{D}) \to (A^\prime, \nu^\prime, \mathsf{D}^\prime)$ is a $\mathsf{T}$-differential algebra morphism, then $f: (A, \chi^{(\nu,\mathsf{D})}) \to (A^\prime, \chi^{(\nu^\prime,\mathsf{D}^\prime)})$ is a $\mathsf{B}$-algebra morphism. 
\end{lemma}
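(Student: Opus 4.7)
The plan is to verify the two Eilenberg--Moore axioms for $\chi^{(\nu,\mathsf{D})} = \mathsf{T}(\mathsf{D}^\natural);\nu$ and then separately check that a $\mathsf{T}$-differential algebra morphism becomes a $\mathsf{B}$-algebra morphism. The linchpin of the argument is the observation that $\chi^{(\nu,\mathsf{D})}$ is itself a $\mathsf{T}$-differential algebra morphism from $(\mathsf{B}A,\mu,\mathsf{b}^\sharp)$ to $(A,\nu,\mathsf{D})$: this follows by composing $\mathsf{T}(\mathsf{D}^\natural)\colon(\mathsf{B}A,\mu,\mathsf{b}^\sharp)\to(\mathsf{T}A,\mu,\mathsf{D}^\sharp)$ from Lemma~\ref{DBlem} with $\nu\colon(\mathsf{T}A,\mu,\mathsf{D}^\sharp)\to(A,\nu,\mathsf{D})$ from Corollary~\ref{dcircv}.

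For the unit axiom $\alpha;\chi^{(\nu,\mathsf{D})}=1_A$, I would simply unfold $\alpha=\iota_0;\eta$, push $\eta$ through $\mathsf{T}(\mathsf{D}^\natural)$ by naturality, use the defining equation $\iota_0;\mathsf{D}^\natural=\mathsf{D}^0=1_A$ from~(\ref{Dflat0}), and finish with the $\mathsf{T}$-algebra identity $\eta;\nu=1$.

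The main work is the associativity axiom $\mathsf{B}(\chi^{(\nu,\mathsf{D})});\chi^{(\nu,\mathsf{D})}=\beta;\chi^{(\nu,\mathsf{D})}$. Writing $\chi$ for $\chi^{(\nu,\mathsf{D})}$, functoriality of $\mathsf{T}$ collapses the left-hand side to $\mathsf{T}\bigl((\bigoplus_n \chi);\mathsf{D}^\natural\bigr);\nu$. Since $\chi$ is a $\mathsf{T}$-differential algebra morphism, equation~(\ref{Dflat2}) applied to $\chi$ yields $(\bigoplus_n \chi);\mathsf{D}^\natural=(\mathsf{b}^\sharp)^\natural;\chi$. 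Substituting, expanding $\chi=\mathsf{T}(\mathsf{D}^\natural);\nu$, and applying the $\mathsf{T}$-algebra identity $\mathsf{T}(\nu);\nu=\mu;\nu$ together with naturality of $\mu$ transforms the expression into $\mathsf{T}\bigl((\mathsf{b}^\sharp)^\natural\bigr);\mu;\mathsf{T}(\mathsf{D}^\natural);\nu=\beta;\chi$, as required. The main obstacle here is organizing these rewrites so the order of naturality applications and the $\mathsf{T}$-algebra axiom line up cleanly.

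For the morphism claim, suppose $f\colon(A,\nu,\mathsf{D})\to(A',\nu',\mathsf{D}')$ is a $\mathsf{T}$-differential algebra morphism. I would compute
\begin{align*}
\mathsf{B}(f);\chi^{(\nu',\mathsf{D}')} &= \mathsf{T}\!\left(\textstyle\bigoplus_{n=0}^\infty f\right);\mathsf{T}((\mathsf{D}')^\natural);\nu' \\
&= \mathsf{T}\!\left(\textstyle\bigl(\bigoplus_{n=0}^\infty f\bigr);(\mathsf{D}')^\natural\right);\nu' \\
&= \mathsf{T}(\mathsf{D}^\natural;f);\nu' = \mathsf{T}(\mathsf{D}^\natural);\mathsf{T}(f);\nu' = \mathsf{T}(\mathsf{D}^\natural);\nu;f = \chi^{(\nu,\mathsf{D})};f,
\end{align*}
where the third equality uses~(\ref{Dflat2}) and the fifth uses that $f$ is a $\mathsf{T}$-algebra morphism. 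This completes the proof sketch, and together with Lemma~\ref{BalgDalg} it establishes the equivalence needed for the monadicity theorem.
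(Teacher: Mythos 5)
Your proposal is correct and follows essentially the same route as the paper: both rest on Lemma~\ref{DBlem}, Corollary~\ref{dcircv}, and equation~(\ref{Dflat2}), followed by the $\mathsf{T}$-algebra axiom and naturality of $\mu$. The only cosmetic difference is that you compose the two $\mathsf{T}$-differential algebra morphisms first and apply~(\ref{Dflat2}) once to the composite $\chi^{(\nu,\mathsf{D})}$, whereas the paper applies~(\ref{Dflat2}) separately to each of the two factors $\mathsf{T}(\mathsf{D}^\natural)$ and $\nu$.
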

\begin{proof} We first show that $\chi^{(\nu,\mathsf{D})}$ satisfies the $\mathsf{B}$-algebra identities (\ref{Talgdef}): 
\begin{align*}
\alpha; \chi^{(\nu,\mathsf{D})} = \eta; \mathsf{T}(\iota_0); \mathsf{T}(\mathsf{D}^\natural); \nu = \eta; \nu =1 
\end{align*}
\begin{align*}
\mathsf{B}(\chi^{(\nu,\mathsf{D})}); \chi^{(\nu,\mathsf{D})} &=~Ê\mathsf{T}\left(\bigoplus\limits^\infty_{n=0} \mathsf{T}(\mathsf{D}^\natural) \right); \mathsf{T}\left(\bigoplus\limits^\infty_{n=0} \nu \right); \mathsf{T}(\mathsf{D}^\natural); \nu \\
&=~ \mathsf{T}\left(\bigoplus\limits^\infty_{n=0} \mathsf{T}(\mathsf{D}^\natural) \right); \mathsf{T}((\mathsf{D}^\sharp)^\natural); \mathsf{T}(\nu); \nu \tag{Corollary \ref{dcircv} + (\ref{Dflat2})} \\
&=~ \mathsf{T}\left( (b^\sharp)^\natural\right); \mathsf{T}\mathsf{T}(\mathsf{D}^\natural); \mathsf{T}(\nu); \nu \tag{Lemma \ref{DBlem} + (\ref{Dflat2})} \\
&=~ \mathsf{T}\left( (b^\sharp)^\natural\right); \mathsf{T}\mathsf{T}(\mathsf{D}^\natural); \mu; \nu \tag{\ref{Talgdef}} \\
&=~Ê\mathsf{T}\left( (b^\sharp)^\natural\right); \mu; \mathsf{T}(\mathsf{D}^\natural); \nu \tag{Naturality of $\mu$} \\
&=~ \beta; \chi^{(\nu,\mathsf{D})}
\end{align*}
Next, suppose that $f$ is a $\mathsf{T}$-differential algebra morphism. Then by (\ref{Dflat2}), we have that: 
\begin{align*}
\mathsf{B}(f); \chi^{(\nu^\prime,\mathsf{D}^\prime)} = \mathsf{T}\left(\bigoplus\limits^\infty_{n=0} f\right); \mathsf{T}((\mathsf{D}^\prime)^\natural); \nu^\prime = \mathsf{T}(\mathsf{D}^\natural); \mathsf{T}(f); \nu= \mathsf{T}(\mathsf{D}^\natural); \nu; f =  \chi^{(\nu,\mathsf{D})}; f
\end{align*}
\end{proof} 

\begin{theorem}\label{freethm} The Eilenberg-Moore category of the monad $(\mathsf{B}, \beta, \alpha)$ is isomorphic to category of $\mathsf{T}$-differential algebras of $\mathbb{X}$.  
\end{theorem}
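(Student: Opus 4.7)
The plan is to use Lemma \ref{BalgDalg} and Lemma \ref{DalgBalg} to define functors $F: \mathbb{X}^\mathsf{B} \to \mathsf{D}[\mathbb{X}^\mathsf{T}]$ by $F(A, \chi) := (A, \nu^\chi, \mathsf{D}^\chi)$ and $G: \mathsf{D}[\mathbb{X}^\mathsf{T}] \to \mathbb{X}^\mathsf{B}$ by $G(A, \nu, \mathsf{D}) := (A, \chi^{(\nu, \mathsf{D})})$, and then to verify that they are mutually inverse. Since both functors act as the identity on underlying maps, functoriality together with the fact that they are mutually inverse on morphisms are automatic; only the object-level bijection requires verification.

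That $FG$ is the identity on objects follows by direct computation, using $\iota_0; \mathsf{D}^\natural = \mathsf{D}^0 = 1_A$ and $\iota_1; \mathsf{D}^\natural = \mathsf{D}$: one gets $\nu^{\chi^{(\nu,\mathsf{D})}} = \mathsf{T}(\iota_0); \mathsf{T}(\mathsf{D}^\natural); \nu = \nu$, and similarly $\mathsf{D}^{\chi^{(\nu,\mathsf{D})}} = \eta; \mathsf{T}(\iota_1); \mathsf{T}(\mathsf{D}^\natural); \nu = \eta; \mathsf{T}(\mathsf{D}); \nu = \mathsf{D}; \eta; \nu = \mathsf{D}$, where the last two equalities use naturality of $\eta$ and the $\mathsf{T}$-algebra unit law.

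The main obstacle is proving $GF$ is the identity on objects: for every $\mathsf{B}$-algebra $(A, \chi)$ one must verify $\chi^{(\nu^\chi, \mathsf{D}^\chi)} = \mathsf{T}((\mathsf{D}^\chi)^\natural); \nu^\chi = \chi$. The strategy is to extract two independent consequences from the $\mathsf{B}$-algebra identity $\mathsf{T}(\bigoplus^\infty_{n=0} \chi); \chi = \mathsf{T}((\mathsf{b}^\sharp)^\natural); \mu; \chi$. First, precomposing both sides with $\mathsf{T}(\iota_0)$ and using $\iota_0; (\mathsf{b}^\sharp)^\natural = (\mathsf{b}^\sharp)^0 = 1$, one obtains $\mathsf{T}(\chi); \mathsf{T}(\iota_0); \chi = \mu; \chi$, which is precisely the condition that $\chi$ is a $\mathsf{T}$-algebra morphism from the free $\mathsf{T}$-algebra $(\mathsf{T}(\bigoplus^\infty_{n=0} A), \mu)$ to $(A, \nu^\chi)$. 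Second, precomposing instead with $\iota_n; \eta$ and using naturality of $\eta$ together with the monad unit law $\eta; \mu = 1$, one obtains the identity
\[ \chi; \iota_n; \eta; \chi = (\mathsf{b}^\sharp)^n; \chi \]
for every $n \in \mathbb{N}$.

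The case $n=1$ of this identity reads $\chi; \mathsf{D}^\chi = \mathsf{b}^\sharp; \chi$, and a routine induction then gives $(\mathsf{b}^\sharp)^n; \chi = \chi; (\mathsf{D}^\chi)^n$ for all $n$. Combining with the displayed identity yields $\chi; \iota_n; \eta; \chi = \chi; (\mathsf{D}^\chi)^n$, and precomposing with $\alpha$ (using $\alpha; \chi = 1$) delivers $\iota_n; \eta; \chi = (\mathsf{D}^\chi)^n$ for every $n$. By the couniversal property of the coproduct, this forces $\eta; \chi = (\mathsf{D}^\chi)^\natural$. Since $\chi$ is a $\mathsf{T}$-algebra morphism out of a free $\mathsf{T}$-algebra, it coincides with the canonical extension of $\eta; \chi$ along $\nu^\chi$, so $\chi = \mathsf{T}(\eta; \chi); \nu^\chi = \mathsf{T}((\mathsf{D}^\chi)^\natural); \nu^\chi = \chi^{(\nu^\chi, \mathsf{D}^\chi)}$, completing the verification.
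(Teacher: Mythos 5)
Your proposal is correct, and while the overall architecture (checking that the constructions of Lemma \ref{BalgDalg} and Lemma \ref{DalgBalg} are mutually inverse, with the direction starting from a $\mathsf{T}$-differential algebra being immediate) coincides with the paper's, your argument for the hard direction $\chi^{(\nu^\chi,\mathsf{D}^\chi)}=\chi$ is genuinely different. The paper establishes this by a single equational chain that inserts $\mathsf{B}(\alpha);\mathsf{B}(\chi)$ in front of $\chi^{(\nu^\chi,\mathsf{D}^\chi)}$ and then repeatedly invokes previously built machinery: that $\chi$, being a $\mathsf{B}$-algebra morphism out of the free $\mathsf{B}$-algebra, is a $\mathsf{T}$-differential algebra morphism (Lemma \ref{BalgDalg}), the naturality square (\ref{Dflat2}) for $(-)^\natural$, and the identity $\bigoplus\limits^\infty_{n=0}\alpha;(\mathsf{b}^\sharp)^\natural=\eta$ extracted from the proof of Proposition \ref{monadprop}. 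You instead mine the single $\mathsf{B}$-algebra associativity axiom directly: precomposing with $\mathsf{T}(\iota_0)$ yields that $\chi$ is a $\mathsf{T}$-algebra morphism $(\mathsf{B}A,\mu)\to(A,\nu^\chi)$, precomposing with $\iota_n;\eta$ yields $\chi;\iota_n;\eta;\chi=(\mathsf{b}^\sharp)^n;\chi$, and a short induction plus cancellation against $\alpha;\chi=1$ forces $\eta;\chi=(\mathsf{D}^\chi)^\natural$ by couniversality of the coproduct; freeness of $(\mathsf{B}A,\mu)$ as a $\mathsf{T}$-algebra then gives $\chi=\mathsf{T}(\eta;\chi);\nu^\chi=\chi^{(\nu^\chi,\mathsf{D}^\chi)}$. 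All the individual steps check out (in particular the uses of naturality of $\iota_n$ and $\eta$, and of $\eta;\mu=1$). What your route buys is self-containment and a sharper conceptual point --- a $\mathsf{B}$-algebra structure map is determined by its restriction $\eta;\chi$ along the free $\mathsf{T}$-algebra, and that restriction is forced to be $(\mathsf{D}^\chi)^\natural$ --- at the cost of an extra induction; the paper's route is shorter given that the supporting lemmas are already in place.
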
 
\begin{proof} To prove this theorem, it suffices to show that the constructions of Lemma \ref{BalgDalg} and Lemma \ref{DalgBalg} are inverses of each other. Starting with a $\mathsf{B}$-algebra $(A, \chi)$, we must show that $\chi^{(\nu^\chi, \mathsf{D}^\chi)}= \chi$. Recall that in the proof of Proposition \ref{monadprop}, we showed that $\bigoplus\limits^\infty_{n=0} \alpha; (\mathsf{b}^\sharp)^\natural = \eta$. Then by using this identity, and the fact $\chi$ is a $\mathsf{B}$-algebra morphism and therefore also a $\mathsf{T}$-differential algebra morphism, we have that: 
\begin{align*}
\chi^{(\nu^\chi, \mathsf{D}^\chi)} &=~ \mathsf{B}(\alpha); \mathsf{B}(\chi); \chi^{(\nu^\chi, \mathsf{D}^\chi)} \tag{\ref{Talgdef}} \\
&=~ \mathsf{B}(\alpha); \mathsf{T}\left(\bigoplus\limits^\infty_{n=0} \chi \right);  \mathsf{T}((\mathsf{D}^\chi)^\natural); \nu^\chi \\
&=~ \mathsf{B}(\alpha);  \mathsf{T}((\mathsf{b}^\sharp)^\natural); \mathsf{T}(\chi); \nu^\chi \tag{Lemma \ref{BalgDalg} + (\ref{Dflat2})} \\
&=~Ê\mathsf{T}\left(\bigoplus\limits^\infty_{n=0} \alpha \right); \mathsf{T}((\mathsf{b}^\sharp)^\natural); \mathsf{T}(\chi); \nu^\chi \\
&=~ \mathsf{T}(\eta); \mathsf{T}(\chi); \nu^\chi \\
&=~Ê\mathsf{T}(\eta); \mathsf{T}(\chi); \mathsf{T}(\iota_0); \chi \\
&=~Ê \mathsf{T}(\eta); \mathsf{T}(\iota_0); Ê\mathsf{T}\left(\bigoplus\limits^\infty_{n=0} \chi \right); \chi \\
&=~ \mathsf{T}(\eta); \mathsf{T}(\iota_0); \mathsf{B}(\chi); \chi \\
&=~ \mathsf{T}(\eta); \mathsf{T}(\iota_0); \beta; \chi \tag{\ref{Talgdef}} \\
&=~Ê\mathsf{T}(\eta); \mathsf{T}(\iota_0);  \mathsf{T}\left( (b^\sharp)^\natural\right); \mu; \chi \\
&=~ \mathsf{T}(\eta); \mu; \chi \\
&=~ \chi \tag{\ref{Talgdef}}
\end{align*}
Conversly, given a $\mathsf{T}$-differential algebra $(A, \nu, \mathsf{D})$, we must show that $\nu^{\chi^{(\nu,\mathsf{D})}} = \nu$ and $\mathsf{D}^{\chi^{(\nu,\mathsf{D})}}= \mathsf{D}$. 
\begin{align*}
\nu^{\chi^{(\nu,\mathsf{D})}} = \mathsf{T}(\iota_0) ;\chi^{(\nu,\mathsf{D})} = \mathsf{T}(\iota_0); \mathsf{T}(\mathsf{D}^\natural); \nu = \nu
\end{align*}
\begin{align*}
\mathsf{D}^{\chi^{(\nu,\mathsf{D})}} = \eta; \mathsf{T}(\iota_1); \chi^{(\nu,\mathsf{D})} = \eta; \mathsf{T}(\iota_1);\mathsf{T}(\mathsf{D}^\natural); \nu = \eta; \mathsf{T}(\mathsf{D}); \nu= \mathsf{D}; \eta; \nu = \mathsf{D}
\end{align*}
\end{proof}

As a consequence of Theorem \ref{freethm}, for each object $A$, $(\mathsf{B}(A), \mu, \mathsf{b}^\sharp)$ is the free $\mathsf{T}$-differential algebra over $A$. 

\section{Cofree $\mathsf{T}$-Differential Algebras and Power Series Over $\mathsf{T}$-Algebras}\label{cofreesec}

In this section we show that for a codifferential category with countable products, the category of $\mathsf{T}$-differential algebras is comonadic over the category of $\mathsf{T}$-algebras. Explicitly, we construct a comonad on the category of $\mathsf{T}$-algebras whose co-Eilenberg-Moore category is equivalent to the category of $\mathsf{T}$-differential algebras. As previously advertised in Example \ref{cofreediff}, this construction is generalization of Hurwitz series rings \cite{keigher1997ring} which are themselves cofree differential algebras \cite{guo2008differential, keigher1975adjunctions}. This generalized construction implies that Hurwitz series algebras inherit more then just algebra structure. For example, as mentioned in Example \ref{smoothex}, Hurwitz series ring over $\mathcal{C}^\infty$-rings are again $\mathcal{C}^\infty$-rings (which to the author's knowledge is a new observation). Towards the end of this section we also discuss power series over $\mathsf{T}$-algebras and show their relation to cofree $\mathsf{T}$-differential algebras. 

Throughout this section, let $\mathbb{X}$ be a codifferential category with algebra modality $(\mathsf{T}, \mu, \eta, \mathsf{m}, \mathsf{u})$, deriving transformation $\mathsf{d}$, and countable products $\prod$. Classically, for an $R$-algebra $A$, the underlying set of the Hurwitz series ring over $A$ is isomorphic to the set $\prod \limits^\infty_{n=0} A$. As we will be working in a codifferential category with countable products, our first objective will be, for a $\mathsf{T}$-algebra $(A, \nu)$, to define a $\mathsf{T}$-differential algebra structure on $\prod \limits^\infty_{n=0} A$ such that the underlying differential algebra structure is precisely that of Hurwitz series rings (Example \ref{cofreediff}).

For a $\mathsf{T}$-algebra $(A, \nu)$, define for each $n \in \mathbb{N}$ the maps $\omega^\nu_n: \mathsf{T}\left( \prod \limits^\infty_{n=0} A \right) \to A$ inductively as: 
\begin{enumerate}[{\em (i)}]
\item $\omega^\nu_0 := \mathsf{T}(\pi_0); \nu$
\item $\omega^\nu_{n+1} := \sum\limits^{n}_{k=0} \binom{n}{k} \cdot \mathsf{d}; (\omega^\nu_k \otimes \pi_{n-k+1}); \mathsf{m}^\nu$
\end{enumerate}
where $\pi_j: \prod \limits^\infty_{n=0} A \to A$ is the $j$-th projection. And define the map $\omega^\nu: \mathsf{T}\left( \prod \limits^\infty_{n=0} A \right) \to \prod \limits^\infty_{n=0} A$ as $\omega^\nu := \langle \omega^\nu_n \vert ~ n \in \mathbb{N} \rangle$, that is, the unique map such that for each $n \in \mathbb{N}$, the following diagram commutes: 
\begin{equation}\label{omegadef}\begin{gathered} \xymatrixcolsep{5pc}\xymatrixrowsep{1pc}\xymatrix{\mathsf{T}\left( \prod \limits^\infty_{n=0} A \right)  \ar[r]^-{\omega^\nu} \ar[ddr]_-{\omega^\nu_{n}} & \prod \limits^\infty_{n=0} A \ar[dd]^-{\pi_n} \\ \\
& A
  } \end{gathered}\end{equation}
We will soon show that $(\prod \limits^\infty_{n=0} A, \omega^\nu)$ is in fact a $\mathsf{T}$-algebra but it is important to note that $\omega^\nu$ is \emph{NOT} the canonical $\mathsf{T}$-algebra of $\prod \limits^\infty_{n=0} A$.  Instead, $\omega^\nu$ is the necessary $\mathsf{T}$-algebra structure to obtain a monoid structure analogous to that of the Hurwitz series ring (\ref{hurwtizproduct}). 
\begin{lemma}\label{omegalemma} For a $\mathsf{T}$-algebra $(A, \nu)$, the following equalities hold (for all $n \in \mathbb{N}$): 
\begin{enumerate}[{\em (i)}]
\item $\mathsf{u} ; \omega^\nu_n = \begin{cases} \mathsf{u}^\nu & \text{if } n=0 \\
0 & \text{o.w.} 
\end{cases}$
\item $\eta; \omega^\nu_n = \pi_n$
\item $\mathsf{m}; \omega^\nu_n = \sum\limits^{n}_{k=0} \binom{n}{k} \cdot (\omega^\nu_k \otimes \omega^\nu_{n-k}); \mathsf{m}^\nu$
\item $\mu; \omega^\nu_n = \mathsf{T}(\omega^\nu); \omega^\nu_n$
\end{enumerate}
\end{lemma}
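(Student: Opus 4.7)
The plan is to prove (i)--(iv) in order, each by induction on $n$ and each invoking a different axiom of the deriving transformation. Throughout, I will freely use naturality of $\mathsf{u}, \eta, \mathsf{m}, \mu$, associativity/commutativity of the monoid $(A,\mathsf{m}^\nu,\mathsf{u}^\nu)$, and the characterisation of $\omega^\nu$ as the tupling, i.e.\ $\omega^\nu; \pi_m = \omega^\nu_m$.

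For (i), the base case is $\mathsf{u};\omega^\nu_0 = \mathsf{u};\mathsf{T}(\pi_0);\nu = \mathsf{u};\nu = \mathsf{u}^\nu$ by naturality of $\mathsf{u}$ and the definition of $\mathsf{u}^\nu$. For $n \geq 1$, every summand of $\omega^\nu_n$ begins with $\mathsf{d}$, so the constant rule \textbf{[d.1]} $\mathsf{u};\mathsf{d}=0$ annihilates each term. For (ii), the base case unfolds to $\eta;\mathsf{T}(\pi_0);\nu = \pi_0;\eta;\nu = \pi_0$ by naturality of $\eta$ and $\eta;\nu = 1$. In the inductive step I would apply the linear rule \textbf{[d.3]} $\eta;\mathsf{d} = \mathsf{u}\otimes 1$ to each summand, rewriting it as $\binom{n}{k}((\mathsf{u};\omega^\nu_k) \otimes \pi_{n-k+1}); \mathsf{m}^\nu$; by (i) only the $k=0$ term survives, collapsing to $(\mathsf{u}^\nu \otimes \pi_{n+1}); \mathsf{m}^\nu = \pi_{n+1}$ by the unit law.

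For (iii), the base case $\mathsf{m};\omega^\nu_0 = (\omega^\nu_0 \otimes \omega^\nu_0); \mathsf{m}^\nu$ follows from naturality of $\mathsf{m}$ together with the fact that $\nu : (\mathsf{T}A,\mathsf{m},\mathsf{u}) \to (A,\mathsf{m}^\nu,\mathsf{u}^\nu)$ is a monoid morphism. In the inductive step I would apply the Leibniz rule \textbf{[d.2]} to split $\mathsf{m};\mathsf{d}$ into two symmetric families corresponding to differentiating the first or second factor; in each family the induced subcomposite contains an $\mathsf{m};\omega^\nu_k$ to which the induction hypothesis applies. After reordering the resulting triple tensor factors via the associativity and commutativity of $\mathsf{m}^\nu$, the double sum should be reorganised using the identity
\[
\binom{n}{j+l}\binom{j+l}{j} \;=\; \binom{n}{l}\binom{n-l}{j},
\]
so that the inner sum re-folds into the recursive definition of $\omega^\nu_{n-l+1}$ (resp.\ $\omega^\nu_{n-j+1}$). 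Combining the two resulting single sums via Pascal's identity (\ref{binom}) should yield $\sum_{k=0}^{n+1}\binom{n+1}{k}(\omega^\nu_k \otimes \omega^\nu_{n+1-k}); \mathsf{m}^\nu$.

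For (iv), the base case $\mu;\omega^\nu_0 = \mathsf{T}(\omega^\nu);\omega^\nu_0$ follows from naturality of $\mu$ and the $\mathsf{T}$-algebra associativity $\mu;\nu = \mathsf{T}(\nu);\nu$, after observing $\mathsf{T}(\omega^\nu; \pi_0) = \mathsf{T}(\omega^\nu_0)$. In the inductive step I would apply the chain rule \textbf{[d.4]} $\mu;\mathsf{d} = \mathsf{d};(\mu \otimes \mathsf{d});(\mathsf{m} \otimes 1)$ to each summand of $\mu;\omega^\nu_{n+1}$, use (iii) to expand the exposed factor $\mathsf{m};\omega^\nu_k$, and run the same binomial identity and re-folding as in (iii) to rewrite the result as $\sum_{j=0}^n \binom{n}{j} \mathsf{d};((\mu;\omega^\nu_j) \otimes \omega^\nu_{n-j+1}); \mathsf{m}^\nu$. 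In parallel, naturality of $\mathsf{d}$ combined with $\omega^\nu;\pi_m = \omega^\nu_m$ should rewrite $\mathsf{T}(\omega^\nu);\omega^\nu_{n+1}$ as $\sum_{j=0}^n \binom{n}{j} \mathsf{d};((\mathsf{T}(\omega^\nu);\omega^\nu_j) \otimes \omega^\nu_{n-j+1}); \mathsf{m}^\nu$, and the inductive hypothesis $\mathsf{T}(\omega^\nu);\omega^\nu_j = \mu;\omega^\nu_j$ for $j \leq n$ identifies the two sides. The main obstacle will be the bookkeeping in (iii) and (iv): both rest on the same binomial identity and on careful use of the symmetric monoidal structure to arrange the three tensor factors so that an inner sum can be recognised as (part of) the recursive definition of some $\omega^\nu_m$, and getting the indexing right in the two Leibniz families (and verifying that the coefficients combine correctly via Pascal) will require some patience.
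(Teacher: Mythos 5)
Your proposal is correct and follows essentially the same route as the paper's proof: induction on $n$ for each part, with the base cases handled via the monoid-morphism/naturality facts, \textbf{[d.1]} for (i), \textbf{[d.3]} plus (i) for (ii), \textbf{[d.2]} with the trinomial revision $\binom{n}{k}\binom{k}{j}=\binom{n}{k-j}\binom{n-(k-j)}{j}$ and Pascal's identity for (iii), and \textbf{[d.4]} together with (iii) and the same refolding for (iv). The only cosmetic difference is that in (iv) you expand both sides to meet in the middle while the paper runs a single chain of equalities from $\mathsf{T}(\omega^\nu);\omega^\nu_{n+1}$ to $\mu;\omega^\nu_{n+1}$, and you state explicitly the binomial identity the paper uses implicitly in its ``Reindexing'' steps.
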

\begin{proof} $(i)$: Since $\omega^\nu_0$ is the composite of monoid morphisms, it follows that $\omega^\nu_0$ is itself a monoid morphism and therefore preserves the unit, i.e.,  $\mathsf{u};\omega^\nu_0=\mathsf{u}^\nu$. For $\omega^\nu_{n+1}$, $n \in \mathbb{N}$, we have that: 
\begin{align*}
\mathsf{u}; \omega^\nu_{n+1} &=~Ê\mathsf{u}; \left( \sum\limits^{n}_{k=0} \binom{n}{k} \cdot   \mathsf{d}; (\omega^\nu_k \otimes \pi_{n-k+1}); \mathsf{m}^\nu \right) \\
&=~\sum\limits^{n}_{k=0} \binom{n}{k} \cdot  \mathsf{u};\mathsf{d}; (\omega^\nu_k \otimes \pi_{n-k+1}); \mathsf{m}^\nu  \\
&=~\sum\limits^{n}_{k=0} \binom{n}{k} \cdot  0; (\omega^\nu_k \otimes \pi_{n-k+1}); \mathsf{m}^\nu \tag{Constant rule \textbf{[d.1]}} \\
&=~ 0
\end{align*}
$(ii)$: For $n=0$ we have that: 
\begin{align*}
\eta; \omega^\nu_0 =Ê\eta; \mathsf{T}(\pi_0); \nu =Ê\pi_0; \eta; \nu =Ê\pi_0 
\end{align*}
While for all $n\in \mathbb{N}$, using $(i)$ we have that: 
\begin{align*}
\eta; \omega^\nu_{n+1} &=~Ê\eta; \left( \sum\limits^{n}_{k=0} \binom{n}{k} \cdot   \mathsf{d}; (\omega^\nu_k \otimes \pi_{n-k+1}); \mathsf{m}^\nu \right) \\
&=~\sum\limits^{n}_{k=0} \binom{n}{k} \cdot  \eta;\mathsf{d}; (\omega^\nu_k \otimes \pi_{n-k+1}); \mathsf{m}^\nu  \\
&=~\sum\limits^{n}_{k=0} \binom{n}{k} \cdot  (\mathsf{u} \otimes 1);(\omega^\nu_k \otimes \pi_{n-k+1}); \mathsf{m}^\nu \tag{Linear rule \textbf{[d.3]}}  \\
&=~ (\mathsf{u} \otimes 1);(\omega^\nu_0 \otimes \pi_{n+1}); \mathsf{m}^\nu +  \sum\limits^{n}_{k=1} \binom{n}{k} \cdot  (\mathsf{u} \otimes 1);(\omega^\nu_k \otimes \pi_{n-k+1}); \mathsf{m}^\nu \tag{Extracting $k=0$}  \\
&=~ (\mathsf{u}^\nu \otimes \pi_{n+1}); \mathsf{m}^\nu +  \sum\limits^{n}_{k=1} \binom{n}{k} \cdot  (0 \otimes \pi_{n-k+1}); \mathsf{m}^\nu \tag{Lemma \ref{omegalemma} (i)}  \\
&=~  \pi_{n+1}; (\mathsf{u}^\nu \otimes 1); \mathsf{m}^\nu + 0  \\
&=~Ê\pi_{n+1}
\end{align*}
$(iii)$: We prove this by induction on $n$. For the base case $n=0$, as explained in $(i)$ above, $\omega^\nu_0$ is a monoid morphism and therefore preserves the multiplication, i.e., $\mathsf{m};\omega^\nu_0= (\omega^\nu_0 \otimes \omega^\nu_0);\mathsf{m}^\nu$. Now suppose the identity holds for $k \leq n$, we now prove it holds for $n+1$:
\begin{align*}
&\mathsf{m}; \omega^\nu_{n+1} =~ \mathsf{m}; \left( \sum\limits^{n}_{k=0} \binom{n}{k} \cdot   \mathsf{d}; (\omega^\nu_k \otimes \pi_{n-k+1}); \mathsf{m}^\nu \right) \\
&=~\sum\limits^{n}_{k=0} \binom{n}{k} \cdot  \mathsf{m};\mathsf{d}; (\omega^\nu_k \otimes \pi_{n-k+1}); \mathsf{m}^\nu  \\
&=~Ê\sum\limits^{n}_{k=0} \binom{n}{k} \cdot \left( (\mathsf{d} \otimes 1);(1 \otimes \sigma);(\mathsf{m} \otimes 1) + (1 \otimes \mathsf{d});(\mathsf{m} \otimes 1) \right) ;(\omega^\nu_k \otimes \pi_{n-k+1}); \mathsf{m}^\nu \tag{Leibniz rule \textbf{[d.2]}}   \\
&=~Ê\sum\limits^{n}_{k=0} \binom{n}{k} \cdot  (\mathsf{d} \otimes 1);(1 \otimes \sigma);(\mathsf{m} \otimes 1);(\omega^\nu_k \otimes \pi_{n-k+1}); \mathsf{m}^\nu \\
&~~+Ê\sum\limits^{n}_{k=0} \binom{n}{k} \cdot (1 \otimes \mathsf{d});(\mathsf{m} \otimes 1); (\omega^\nu_k \otimes \pi_{n-k+1}); \mathsf{m}^\nu  \\
&=~Ê\sum\limits^{n}_{k=0} \binom{n}{k} \cdot  (\mathsf{d} \otimes 1);(1 \otimes \sigma);(\left(\sum\limits^{k}_{j=0} \binom{k}{j} \cdot (\omega^\nu_j \otimes \omega^\nu_{k-j}); \mathsf{m}^\nu \right) \otimes \pi_{n-k+1}); \mathsf{m}^\nu  \\
&~~+\sum\limits^{n}_{k=0} \binom{n}{k} \cdot  (1 \otimes \mathsf{d});(\left(\sum\limits^{k}_{j=0} \binom{k}{j} \cdot (\omega^\nu_j \otimes \omega^\nu_{k-j}); \mathsf{m}^\nu \right) \otimes \pi_{n-k+1}); \mathsf{m}^\nu \tag{Induction hypothesis} \\
&=~Ê\sum\limits^{n}_{k=0} \binom{n}{k} \cdot \sum\limits^{k}_{j=0} \binom{k}{j} \cdot (\mathsf{d} \otimes 1);(1 \otimes \sigma);(\omega^\nu_j \otimes \omega^\nu_{k-j} \otimes \pi_{n-k+1});(\mathsf{m}^\nu \otimes 1); \mathsf{m}^\nu  \\
&~~+ \sum\limits^{n}_{k=0} \binom{n}{k} \cdot \sum\limits^{k}_{j=0} \binom{k}{j} \cdot(1 \otimes \mathsf{d});(\omega^\nu_j \otimes \omega^\nu_{k-j} \otimes \pi_{n-k+1});(\mathsf{m}^\nu \otimes 1); \mathsf{m}^\nu  \\
&=~Ê\sum\limits^{n}_{k=0} \binom{n}{k} \cdot \sum\limits^{k}_{j=0} \binom{k}{j} \cdot (\mathsf{d} \otimes 1);(\omega^\nu_j \otimes \pi_{n-k+1} \otimes \omega^\nu_{k-j});(\mathsf{m}^\nu \otimes 1); \mathsf{m}^\nu  \\
&~~+\sum\limits^{n}_{k=0} \binom{n}{k} \cdot \sum\limits^{k}_{j=0} \binom{k}{j} \cdot(1 \otimes \mathsf{d});(\omega^\nu_j \otimes \omega^\nu_{k-j} \otimes \pi_{n-k+1});(1 \otimes \mathsf{m}^\nu); \mathsf{m}^\nu \tag{Assoc. \& Comm. of Mult.}\\
&=~Ê\sum\limits^{n}_{k=0} \binom{n}{k} \cdot  (\mathsf{d} \otimes 1);(\omega^\nu_k \otimes \pi_{n-k+1} \otimes \omega^\nu_{0});(\mathsf{m}^\nu \otimes 1); \mathsf{m}^\nu \\
&~~+ \sum\limits^{n}_{k=1} \binom{n}{k}  \cdot \sum\limits^{k-1}_{j=0} \binom{k-1}{j} \cdot (\mathsf{d} \otimes 1); (\omega^\nu_j \otimes \pi_{k-1-j+1} \otimes \omega^\nu_{n+1-k}); (\mathsf{m}^\nu \otimes 1); \mathsf{m}^\nu \\
&~~+ \sum\limits^{n}_{k=0} \binom{n}{k} \cdot  (1 \otimes \mathsf{d});(\omega^\nu_0 \otimes \omega^\nu_{k} \otimes \pi_{n-k+1});(1 \otimes \mathsf{m}^\nu); \mathsf{m}^\nu  \\
&~~+ \sum\limits^{n}_{k=1}  \binom{n}{k-1}  \cdot  \sum\limits^{n-k}_{j=0} \binom{n-k}{j} \cdot (1 \otimes \mathsf{d}); (\omega^\nu_k \otimes \omega^\nu_j \otimes \pi_{n-k-j+1}); (1 \otimes \mathsf{m}^\nu); \mathsf{m}^\nu \tag{Reindexing}\\
&=~ (\left(\sum\limits^{n}_{k=0} \binom{n}{k} \cdot \mathsf{d}; (\omega^\nu_k \otimes \pi_{n-k+1}); \mathsf{m}^\nu\right) \otimes \omega^\nu_{0}); \mathsf{m}^\nu\\
&~~+ \sum\limits^{n}_{k=1} \binom{n}{k}  \cdot (\left( \sum\limits^{k-1}_{j=0} \binom{k-1}{j} \cdot \mathsf{d}; (\omega^\nu_j \otimes \pi_{k-1-j+1}); \mathsf{m}^\nu \right) \otimes \omega^\nu_{n+1-k}); \mathsf{m}^\nu \\
&~~+ (\omega^\nu_{0} \otimes \left(\sum\limits^{n}_{k=0} \binom{n}{k} \cdot \mathsf{d}; (\omega^\nu_k \otimes \pi_{n-k+1}); \mathsf{m}^\nu\right)); \mathsf{m}^\nu \\  
&~~+ \sum\limits^{n}_{k=1}  \binom{n}{k-1}  \cdot (\omega^\nu_k \otimes \left( \sum\limits^{n-k}_{j=0} \binom{n-k}{j} \cdot \mathsf{d}; (\omega^\nu_j \otimes \pi_{n-k-j+1}); \mathsf{m}^\nu \right)); \mathsf{m}^\nu\\
&=~Ê(\omega^\nu_0 \otimes \omega^\nu_{n+1});\mathsf{m}^\nu + \sum\limits^{n}_{k=1} \binom{n}{k}  \cdot (\omega^\nu_k \otimes \omega^\nu_{n+1-k}); \mathsf{m}^\nu \\
&~~~+ (\omega^\nu_{n+1} \otimes \omega^\nu_0);\mathsf{m}^\nu + \sum\limits^{n}_{k=1}  \binom{n}{k-1}  \cdot (\omega^\nu_k \otimes \omega^\nu_{n+1-k}); \mathsf{m}^\nu\\
&=~Ê(\omega^\nu_0 \otimes \omega^\nu_{n+1});\mathsf{m}^\nu + (\omega^\nu_{n+1} \otimes \omega^\nu_0);\mathsf{m}^\nu + \sum\limits^{n}_{k=1} \left(\binom{n}{k} + \binom{n}{k-1} \right)  \cdot (\omega^\nu_k \otimes \omega^\nu_{n+1-k}); \mathsf{m}^\nu\\
&=~Ê(\omega^\nu_0 \otimes \omega^\nu_{n+1});\mathsf{m}^\nu + (\omega^\nu_{n+1} \otimes \omega^\nu_0);\mathsf{m}^\nu + \sum\limits^{n}_{k=1} \binom{n+1}{k} \cdot (\omega^\nu_k \otimes \omega^\nu_{n+1-k}); \mathsf{m}^\nu \tag{\ref{binom}}\\
&=~Ê\sum\limits^{n+1}_{k=0} \binom{n+1}{k} \cdot (\omega^\nu_k \otimes \omega^\nu_{n+1-k}); \mathsf{m}^\nu
\end{align*} 
$(iv)$: Again, we prove this by induction on $n$. For the base case $n=0$, we have that: 
\begin{align*}
\mathsf{T}(\omega^\nu);\omega^\nu_0 &=~ \mathsf{T}(\omega^\nu);\mathsf{T}(\pi_0); \nu = \mathsf{T}(\omega^\nu_0); \nu =Ê\mathsf{T}^2(\pi_0); \mathsf{T}(\nu);\nu =Ê\mathsf{T}^2(\pi_0); \mu;\nu  = \mu;\mathsf{T}(\pi_0); \nu =Ê\mu;\omega^\nu_0
\end{align*}
Now suppose that $\mathsf{T}(\omega^\nu);\omega^\nu_k= \mu; \omega^\nu_k$ holds for $k \leq n$, we now show it holds for $n+1$: 
\begin{align*}
&\mathsf{T}(\omega^\nu);\omega^\nu_{n+1}=~ \mathsf{T}(\omega^\nu);\left( \sum\limits^{n}_{k=0} \binom{n}{k} \cdot   \mathsf{d}; (\omega^\nu_k \otimes \pi_{n-k+1}); \mathsf{m}^\nu \right) \\
&=~Ê\sum\limits^{n}_{k=0} \binom{n}{k} \cdot   \mathsf{T}(\omega^\nu);\mathsf{d}; (\omega^\nu_k \otimes \pi_{n-k+1}); \mathsf{m}^\nu \\
&=~Ê\sum\limits^{n}_{k=0} \binom{n}{k} \cdot  \mathsf{d};( \mathsf{T}(\omega^\nu) \otimes \omega^\nu); (\omega^\nu_k \otimes \pi_{n-k+1}); \mathsf{m}^\nu \tag{Naturality of $\mathsf{d}$} \\
&=~Ê\sum\limits^{n}_{k=0} \binom{n}{k} \cdot  \mathsf{d};( \mu \otimes \omega^\nu); (\omega^\nu_k \otimes \pi_{n-k+1}); \mathsf{m}^\nu \tag{Induction Hypothesis} \\
&=~Ê\sum\limits^{n}_{k=0} \binom{n}{k} \cdot  \mathsf{d};( \mu \otimes 1); (\omega^\nu_k \otimes \omega^\nu_{n-k+1}); \mathsf{m}^\nu \\
&=~Ê\sum\limits^{n}_{k=0} \binom{n}{k} \cdot  \mathsf{d};( \mu \otimes 1); (\omega^\nu_k \otimes \left(\sum\limits^{n-k}_{j=0} \binom{n-k}{j} \cdot \mathsf{d}; (\omega^\nu_j \otimes \pi_{n-k-j+1}); \mathsf{m}^\nu \right)); \mathsf{m}^\nu \\
&=~Ê\sum\limits^{n}_{k=0}\sum\limits^{n-k}_{j=0} \binom{n}{k} \binom{n-k}{j} \cdot   \mathsf{d};( \mu \otimes \mathsf{d}); (\omega^\nu_k \otimes \omega^\nu_j \otimes \pi_{n-k-j+1}); (1 \otimes \mathsf{m}^\nu); \mathsf{m}^\nu \\
&=~Ê\sum\limits^{n}_{k=0}\sum\limits^{n-k}_{j=0} \binom{n}{k} \binom{n-k}{j} \cdot    \mathsf{d};( \mu \otimes \mathsf{d}); (\omega^\nu_k \otimes \omega^\nu_j \otimes \pi_{n-k-j+1}); (\mathsf{m}^\nu \otimes 1); \mathsf{m}^\nu \tag{Assoc. of Mult.} \\
&=~ \sum\limits^{n}_{k=0} \sum\limits^{k}_{j=0}  \binom{n}{k} \binom{k}{j} \cdot  \mathsf{d};(\mu \otimes \mathsf{d});(\omega^\nu_j \otimes \omega^\nu_{k-j} \otimes \pi_{n-k+1}); (\mathsf{m}^\nu \otimes 1); \mathsf{m}^\nu \tag{Reindexing} \\
&=~ \sum\limits^{n}_{k=0} \binom{n}{k} \cdot   \mathsf{d};(\mu \otimes \mathsf{d});(\left(\sum\limits^{k}_{j=0} \binom{k}{j} \cdot (\omega^\nu_j \otimes \omega^\nu_{k-j}); \mathsf{m}^\nu \right) \otimes \pi_{n-k+1}); \mathsf{m}^\nu \\
&=~ \sum\limits^{n}_{k=0} \binom{n}{k} \cdot   \mathsf{d};(\mu \otimes \mathsf{d});(\mathsf{m} \otimes 1);(\omega^\nu_k \otimes \pi_{n-k+1}); \mathsf{m}^\nu \tag{Lemma \ref{omegalemma} (iii)} \\
&=~Ê \sum\limits^{n}_{k=0} \binom{n}{k} \cdot   \mu; \mathsf{d}; (\omega^\nu_k \otimes \pi_{n-k+1}); \mathsf{m}^\nu \tag{Chain rule \textbf{[d.4]}}\\
&=~Ê\mu;\left( \sum\limits^{n}_{j=0} \binom{n}{j} \cdot   \mathsf{d}; (\omega^\nu_j \otimes \pi_{n-j+1}); \mathsf{m}^\nu \right) \\
&=~ \mu; \omega^\nu_{n+1} 
\end{align*}
\end{proof} 

\begin{proposition}\label{omegatalg} For a $\mathsf{T}$-algebra $(A, \nu)$, the pair $(\prod \limits^\infty_{n=0} A, \omega^\nu)$ is a $\mathsf{T}$-algebra.
\end{proposition}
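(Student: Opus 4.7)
The plan is to verify the two $\mathsf{T}$-algebra axioms from (\ref{Talgdef}) by exploiting the universal property of the product $\prod\limits^\infty_{n=0}A$: two parallel maps into this product are equal if and only if they agree after post-composing with each projection $\pi_n$. By definition (\ref{omegadef}), $\omega^\nu;\pi_n = \omega^\nu_n$, so all calculations reduce to component-wise verifications which have already been done in Lemma \ref{omegalemma}.

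First I would verify the unit axiom $\eta;\omega^\nu = 1_{\prod A}$. Post-composing with $\pi_n$ gives
\[ \eta;\omega^\nu;\pi_n = \eta;\omega^\nu_n = \pi_n, \]
where the last equality is precisely Lemma \ref{omegalemma}(ii). Since this holds for every $n$, the universal property of the product yields $\eta;\omega^\nu = 1$.

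Next I would verify the associativity axiom $\mathsf{T}(\omega^\nu);\omega^\nu = \mu;\omega^\nu$. Again post-composing with $\pi_n$ gives
\[ \mathsf{T}(\omega^\nu);\omega^\nu;\pi_n = \mathsf{T}(\omega^\nu);\omega^\nu_n = \mu;\omega^\nu_n = \mu;\omega^\nu;\pi_n, \]
where the middle equality is Lemma \ref{omegalemma}(iv). Once more, the universal property of the product gives the desired equality.

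There is no real obstacle here: all the hard work has already been absorbed into parts (ii) and (iv) of Lemma \ref{omegalemma}, where the inductive arguments (using the constant, linear, Leibniz, and chain rules for $\mathsf{d}$) were carried out. The present proposition is essentially a bookkeeping consequence of those identities together with the universal property of the countable product, so the proof will be short.
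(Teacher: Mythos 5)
Your proof is correct and is exactly the paper's argument: the paper likewise deduces the proposition from Lemma \ref{omegalemma} (ii) and (iv) together with the universal property of the product, and you have merely written out the component-wise verification that the paper leaves implicit.
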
 
\begin{proof}
This follows from Lemma \ref{omegalemma} (ii) and (iv) and the universality of the product. \end{proof} 

\noindent The induced multiplication and unit of $(\prod \limits^\infty_{n=0} A, \omega^\nu)$ are precisely the Hurwitz product and unit \cite{keigher1997ring}. 

\begin{lemma}\label{hurwitzmult} For a $\mathsf{T}$-algebra $(A, \nu)$, the following equalities hold: 
\begin{enumerate}[{\em (i)}]
\item $\mathsf{u}^{\omega^\nu}; \pi_n = \begin{cases} \mathsf{u}^\nu & \text{if } n=0 \\
0 & \text{o.w.} 
\end{cases}$
\item $\mathsf{m}^{\omega^\nu}; \pi_n = \sum\limits^{n}_{k=0} \binom{n}{k} \cdot (\pi_k \otimes \pi_{n-k}); \mathsf{m}^\nu$
\end{enumerate}
\end{lemma}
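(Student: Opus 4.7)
The plan is to unfold the definitions of $\mathsf{m}^{\omega^\nu}$ and $\mathsf{u}^{\omega^\nu}$ from (\ref{Talgmon}) and (\ref{Talgmonu}), then project onto the $n$-th coordinate via $\pi_n$ and use the defining property (\ref{omegadef}) that $\omega^\nu; \pi_n = \omega^\nu_n$. Once this is done, the result reduces to the component-wise identities already established in Lemma \ref{omegalemma}, so no new calculation is needed.

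For part (i), I would write $\mathsf{u}^{\omega^\nu}; \pi_n = \mathsf{u}; \omega^\nu; \pi_n = \mathsf{u}; \omega^\nu_n$, and then invoke Lemma \ref{omegalemma} (i) directly, which already asserts exactly the case split we need.

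For part (ii), the computation is
\[
\mathsf{m}^{\omega^\nu}; \pi_n = (\eta \otimes \eta);\mathsf{m};\omega^\nu;\pi_n = (\eta \otimes \eta);\mathsf{m};\omega^\nu_n,
\]
and then applying Lemma \ref{omegalemma} (iii) rewrites this as a sum involving $(\omega^\nu_k \otimes \omega^\nu_{n-k});\mathsf{m}^\nu$. Pushing the $(\eta \otimes \eta)$ through the tensor and using Lemma \ref{omegalemma} (ii), which identifies $\eta;\omega^\nu_k$ with $\pi_k$, delivers the desired Hurwitz-style formula. Bilinearity of $\otimes$ and composition with respect to the additive enrichment is used to pull $(\eta \otimes \eta)$ inside the sum, which is routine for an additive symmetric monoidal category.

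There is no real obstacle here; the lemma is essentially a bookkeeping consequence of Lemma \ref{omegalemma} together with the universal property (\ref{omegadef}) of the product. The only subtlety is remembering that $\mathsf{m}^{\omega^\nu}$ and $\mathsf{u}^{\omega^\nu}$ are the \emph{induced} monoid structure arising from the $\mathsf{T}$-algebra $(\prod_{n=0}^\infty A, \omega^\nu)$ (and not from the canonical $\mathsf{T}$-algebra structure on the product), which is precisely why this calculation recovers the Hurwitz product and unit rather than the pointwise ones.
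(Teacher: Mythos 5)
Your proposal is correct and follows exactly the paper's own proof: unfold $\mathsf{u}^{\omega^\nu}$ and $\mathsf{m}^{\omega^\nu}$ via (\ref{Talgmon}) and (\ref{Talgmonu}), use $\omega^\nu;\pi_n = \omega^\nu_n$, and then apply Lemma \ref{omegalemma} (i) for the unit and (iii) followed by (ii) for the multiplication, pulling $(\eta\otimes\eta)$ through the sum by additivity. Nothing is missing.
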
 
\begin{proof} For the unit, by Lemma \ref{omegalemma} $(i)$, we have that:
\[\mathsf{u}^{\omega^\nu}; \pi_n =\mathsf{u}; \omega^\nu; \pi_n= \mathsf{u}; \omega^\nu_n = \begin{cases} \mathsf{u}^\nu & \text{if } n=0 \\
0 & \text{o.w.} 
\end{cases} \]
For the multiplication, by Lemma \ref{omegalemma} $(ii)$ and $(iii)$, we have that:
\begin{align*}
\mathsf{m}^{\omega^\nu}; \pi_n &=~Ê(\eta \otimes \eta);\mathsf{m};\omega^\nu; \pi_n \\
&=~ (\eta \otimes \eta);\mathsf{m}; \omega^\nu_n \\
&=~ (\eta \otimes \eta); \left( \sum\limits^{n}_{k=0} \binom{n}{k} \cdot (\omega^\nu_k \otimes \omega^\nu_{n-k}); \mathsf{m}^\nu \right) \tag{Lemma \ref{omegalemma} (iii)} \\
&=~Ê\sum\limits^{n}_{k=0} \binom{n}{k} \cdot (\eta \otimes \eta);(\omega^\nu_k \otimes \omega^\nu_{n-k}); \mathsf{m}^\nu \\
&=~Ê \sum\limits^{n}_{k=0} \binom{n}{k} \cdot (\pi_k \otimes \pi_{n-k}); \mathsf{m}^\nu \tag{Lemma \ref{omegalemma} (ii)}
\end{align*}
\end{proof} 

Define the functor $\mathsf{H}: \mathbb{X}^\mathsf{T} \to \mathbb{X}^\mathsf{T}$ on objects as $\mathsf{H}(A, \nu):= (\prod \limits^\infty_{n=0} A, \omega^\nu)$ and on maps as ${\mathsf{H}(f) := \prod \limits^\infty_{n=0} f}$. 

\begin{lemma} $\mathsf{H}: \mathbb{X}^\mathsf{T} \to \mathbb{X}^\mathsf{T}$ is a functor. 
\end{lemma}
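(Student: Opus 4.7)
The plan is to establish two things: first, that $\mathsf{H}(f) := \prod\limits^\infty_{n=0} f$ is a genuine $\mathsf{T}$-algebra morphism $\mathsf{H}(A,\nu) \to \mathsf{H}(B,\nu')$ whenever $f: (A,\nu) \to (B,\nu')$ is one, and second, that $\mathsf{H}$ preserves identities and composition. The second point is immediate from the couniversal property of the countable product: $\prod\limits^\infty_{n=0} 1_A = 1_{\prod A}$ and $\prod\limits^\infty_{n=0}(f;g) = \prod\limits^\infty_{n=0} f; \prod\limits^\infty_{n=0} g$, since both sides agree after postcomposition with each $\pi_n$. So the substantive content is the first point.

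To show $\mathsf{T}(\mathsf{H}(f)); \omega^{\nu'} = \omega^\nu; \mathsf{H}(f)$, by the couniversal property of the product it suffices to verify, for each $n \in \mathbb{N}$, the equality
\begin{equation*}
\mathsf{T}(\mathsf{H}(f)); \omega^{\nu'}_n = \omega^\nu_n; f,
\end{equation*}
using $\mathsf{H}(f); \pi_n = \pi_n; f$. I would proceed by induction on $n$, mirroring the inductive definition of the $\omega$-components. For the base case $n=0$, $\omega^{\nu'}_0 = \mathsf{T}(\pi_0);\nu'$, so the identity reduces to $\mathsf{T}(\pi_0; f);\nu' = \mathsf{T}(\pi_0); \nu; f$, which is just the fact that $f$ is a $\mathsf{T}$-algebra morphism applied after the functoriality of $\mathsf{T}$.

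For the inductive step, I would expand $\omega^{\nu'}_{n+1}$ using its defining formula, then apply naturality of the deriving transformation $\mathsf{d}$ to move $\mathsf{T}(\mathsf{H}(f))$ across: $\mathsf{T}(\mathsf{H}(f));\mathsf{d} = \mathsf{d}; (\mathsf{T}(\mathsf{H}(f)) \otimes \mathsf{H}(f))$. The two tensor components then hit $\omega^{\nu'}_k$ and $\pi_{n-k+1}$ respectively; the induction hypothesis turns the first into $\omega^\nu_k; f$, while $\mathsf{H}(f);\pi_{n-k+1} = \pi_{n-k+1}; f$ handles the second. After this, each summand is of the form $\mathsf{d}; (\omega^\nu_k \otimes \pi_{n-k+1}); (f \otimes f); \mathsf{m}^{\nu'}$. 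The last ingredient is that $f$, being a $\mathsf{T}$-algebra morphism, is automatically a monoid morphism for the induced monoid structures (as noted in the paragraph after Definition \ref{algmoddef}), so $(f \otimes f); \mathsf{m}^{\nu'} = \mathsf{m}^\nu; f$. Factoring $f$ out of the sum and repackaging then gives $\omega^\nu_{n+1}; f$, as required.

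The main obstacle is merely bookkeeping: keeping the indices, tensor factors, and binomial coefficients aligned while simultaneously invoking naturality of $\mathsf{d}$ and the monoid-morphism property of $f$. No new identity on $\omega^\nu$ is needed beyond its definition and the observation that $\mathsf{T}$-algebra morphisms are monoid morphisms, so once the induction is set up the computation proceeds mechanically.
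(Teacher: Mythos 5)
Your proposal is correct and follows essentially the same route as the paper: componentwise induction on $n$, with the base case reducing to $f$ being a $\mathsf{T}$-algebra morphism, and the inductive step combining naturality of $\mathsf{d}$, the induction hypothesis, and the fact that $\mathsf{T}$-algebra morphisms are monoid morphisms for the induced multiplications. The only (welcome) addition is your explicit remark that preservation of identities and composition is immediate from the universal property of the product, which the paper leaves implicit.
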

\begin{proof} We need only check that for a $\mathsf{T}$-algebra morphism $f$, the map $\mathsf{H}(f)$ is a $\mathsf{T}$-algebra morphism. So let $f: (A, \nu) \to (B, \nu^\prime)$ be a $\mathsf{T}$-algebra morphism. We first show by induction on $n$ that $\mathsf{T}\left(\prod \limits^\infty_{n=0} f\right); \omega^{\nu^\prime}_n= \omega^\nu_n; f$. For the base case $n=0$, we have that: 
\begin{align*}
\mathsf{T}\left(\prod \limits^\infty_{n=0} f\right); \omega^{\nu^\prime}_0 =Ê\mathsf{T}\left(\prod \limits^\infty_{n=0} f\right); \mathsf{T}(\pi_0); \nu^\prime = \mathsf{T}(\pi_0); \mathsf{T}(f); \nu^\prime = \mathsf{T}(\pi_0); \nu; f = \omega^\nu_0; f
\end{align*}
Now suppose the equality holds for $k \leq n$, we now show it for $n+1$: 
\begin{align*}
\mathsf{T}\left(\prod \limits^\infty_{n=0} f\right); \omega^\nu_{n+1} &=~Ê\mathsf{T}\left(\prod \limits^\infty_{n=0} f\right); \left( \sum\limits^{n}_{k=0} \binom{n}{k} \cdot \mathsf{d}; (\omega^\nu_k \otimes \pi_{n-k+1}); \mathsf{m}^\nu \right) \\
&=~  \sum\limits^{n}_{k=0} \binom{n}{k} \cdot \mathsf{T}\left(\prod \limits^\infty_{n=0} f\right);\mathsf{d}; (\omega^\nu_k \otimes \pi_{n-k+1}); \mathsf{m}^{\nu^\prime}  \\
&=~ \sum\limits^{n}_{k=0} \binom{n}{k} \cdot \mathsf{d};(\mathsf{T}\left(\prod \limits^\infty_{n=0} f \right) \otimes \prod \limits^\infty_{n=0} f);(\omega^\nu_k \otimes \pi_{n-k+1}); \mathsf{m}^{\nu^\prime} \tag{Naturality of $\mathsf{d}$}  \\
&=~  \sum\limits^{n}_{k=0} \binom{n}{k} \cdot \mathsf{d};(\omega^\nu_k \otimes \pi_{n-k+1});(f \otimes f); \mathsf{m}^{\nu^\prime} \tag{Induction Hypothesis}  \\
&=~ \sum\limits^{n}_{k=0} \binom{n}{k} \cdot \mathsf{d};(\omega^\nu_k \otimes \pi_{n-k+1}); \mathsf{m}^\nu; f \tag{\ref{monmorph}} \\
&=~Ê\left( \sum\limits^{n}_{k=0} \binom{n}{k} \cdot \mathsf{d}; (\omega^\nu_k \otimes \pi_{n-k+1}); \mathsf{m}^\nu \right); f \\
&=~ \omega^\nu_{n+1}; f
\end{align*}
Then by the universal property of the product, $\mathsf{T}\left(\prod \limits^\infty_{n=0} f\right); \omega^\nu= \omega^\nu; \prod \limits^\infty_{n=0} f$. 

\end{proof} 

Before defining a comonad structure on $\mathsf{H}$, we define a $\mathsf{T}$-differential algebra structure on $\mathsf{H}(A,\nu)$. For any object $A$, define the map $\mathsf{h}: \prod \limits^\infty_{n=0} A \to \prod \limits^\infty_{n=0} A$ as $\mathsf{h} := \langle \pi_{n+1} \vert ~ n \in \mathbb{N} \rangle$, that is, the unique map which for each $n \in \mathbb{N}$ makes the following diagram commute:
\begin{equation}\label{}\begin{gathered} \xymatrixcolsep{5pc}\xymatrixrowsep{2pc}\xymatrix{\prod \limits^\infty_{n=0} A \ar[r]^-{\mathsf{h}} \ar[dr]_-{\pi_{n+1}} & \prod \limits^\infty_{n=0} A \ar[d]^-{\pi_n} \\
& A
  } \end{gathered}\end{equation}
  
 \begin{proposition}\label{Hdiff} For a $\mathsf{T}$-algebra $(A,\nu)$, $(\prod \limits^\infty_{n=0} A, \omega^\nu, \mathsf{h})$ is a $\mathsf{T}$-differential algebra. Furthermore, if $f: (A, \nu) \to (B, \nu^\prime)$ is a $\mathsf{T}$-algebra morphism, then $\prod \limits^\infty_{n=0} f: (\prod \limits^\infty_{n=0} A, \omega^\nu, \mathsf{h}) \to (\prod \limits^\infty_{n=0} B, \omega^{\nu^\prime}, \mathsf{h})$ is a $\mathsf{T}$-differential algebra morphism.
\end{proposition}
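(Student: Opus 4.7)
The plan is to verify the $\mathsf{T}$-differential algebra axiom (\ref{Tdiffalg}) for $(\prod\limits^\infty_{n=0} A, \omega^\nu, \mathsf{h})$ by post-composing both sides with the projections $\pi_n$ and appealing to the universal property of the countable product. Concretely, one must show
\[ \omega^\nu; \mathsf{h} = \mathsf{d}; (\omega^\nu \otimes \mathsf{h}); \mathsf{m}^{\omega^\nu}, \]
and this reduces to checking that the two composites agree after applying $\pi_n$ for every $n \in \mathbb{N}$.

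For the left-hand side, by definition of $\mathsf{h}$ and (\ref{omegadef}), $\omega^\nu; \mathsf{h}; \pi_n = \omega^\nu; \pi_{n+1} = \omega^\nu_{n+1}$. For the right-hand side, I would apply Lemma \ref{hurwitzmult} (ii) to rewrite $\mathsf{m}^{\omega^\nu}; \pi_n$ as the Hurwitz sum $\sum\limits^n_{k=0} \binom{n}{k}\cdot (\pi_k \otimes \pi_{n-k});\mathsf{m}^\nu$, then use bilinearity of tensor/composition and the identities $\omega^\nu;\pi_k = \omega^\nu_k$ and $\mathsf{h};\pi_{n-k} = \pi_{n-k+1}$ to obtain
\[ \mathsf{d}; (\omega^\nu \otimes \mathsf{h}); \mathsf{m}^{\omega^\nu}; \pi_n = \sum\limits^n_{k=0}\binom{n}{k}\cdot \mathsf{d};(\omega^\nu_k \otimes \pi_{n-k+1});\mathsf{m}^\nu, \]
which is precisely the inductive definition of $\omega^\nu_{n+1}$. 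Thus both sides agree on all projections, and the universal property of the product yields the required equality.

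For the second statement, $\prod\limits^\infty_{n=0} f$ is already a $\mathsf{T}$-algebra morphism by the preceding lemma, so it remains only to show that it commutes with the derivations $\mathsf{h}$, i.e.\ $\left(\prod\limits^\infty_{n=0} f\right);\mathsf{h} = \mathsf{h};\left(\prod\limits^\infty_{n=0} f\right)$. Again by the universal property, it suffices to check this after post-composing with $\pi_n$, where both sides reduce to $\pi_{n+1}; f$ using $\mathsf{h};\pi_n = \pi_{n+1}$ and naturality of the projections.

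The routine step is the first computation, which is essentially a bookkeeping exercise once Lemma \ref{hurwitzmult} is in hand; there is no real obstacle since the inductive formula for $\omega^\nu_{n+1}$ was designed precisely so that this chain-rule identity (\ref{Tdiffalg}) holds on projections. The only delicate point is making sure the shift in indices matches, namely that $\omega^\nu;\pi_k = \omega^\nu_k$ and $\mathsf{h};\pi_{n-k} = \pi_{n-k+1}$ combine correctly with the binomial coefficients from Lemma \ref{hurwitzmult}, which they do exactly.
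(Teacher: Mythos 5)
Your proposal is correct and follows essentially the same route as the paper's proof: post-compose with each projection $\pi_n$, rewrite $\mathsf{m}^{\omega^\nu};\pi_n$ via Lemma \ref{hurwitzmult} (ii), recognize the inductive definition of $\omega^\nu_{n+1}$, and conclude by the universal property of the product; the morphism part is likewise handled projection-wise exactly as in the paper.
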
 
\begin{proof} We first show that $\mathsf{h}$ is a $\mathsf{T}$-derivation. Notice that for each $n \in \mathbb{N}$, we have that: 
\begin{align*}
\mathsf{d}; (\omega^\nu \otimes \mathsf{h}); \mathsf{m}^{\omega^\nu}; \pi_n &=~Ê\mathsf{d}; (\omega^\nu \otimes \mathsf{h}); \left(\sum\limits^{n}_{k=0} \binom{n}{k} \cdot (\pi_k \otimes \pi_{n-k}); \mathsf{m}^\nu \right) \tag{Lemma \ref{hurwitzmult} (ii)} \\
&=~Ê\sum\limits^{n}_{k=0} \binom{n}{k} \cdot \mathsf{d}; (\omega^\nu \otimes \mathsf{h}); (\pi_k \otimes \pi_{n-k}); \mathsf{m}^\nu \\
&=~Ê\sum\limits^{n}_{k=0} \binom{n}{k} \cdot \mathsf{d}; (\omega^\nu_k \otimes \pi_{n-k+1}); \mathsf{m}^\nu\\
&=~ \omega^\nu_{n+1} \\
&=~ \omega^\nu; \pi_{n+1}\\
&=~ \omega^\nu; \mathsf{h}; \pi_{n}
\end{align*}
Then by the universal property of the product, $\mathsf{d}; (\omega^\nu \otimes \mathsf{h}); \mathsf{m}^{\omega^\nu}= \omega^\nu; \mathsf{h}$.

 Next we show that $\prod \limits^\infty_{n=0} f ; \mathsf{h}= \mathsf{h}; \prod \limits^\infty_{n=0} f$. Notice that for each $n \in \mathbb{N}$, we have that: 
\[\prod \limits^\infty_{n=0} f; \mathsf{h}; \pi_n = \prod \limits^\infty_{n=0} f; \pi_{n+1} = \pi_{n+1}; f = \mathsf{h}; \pi_n; f \]
Then by the universal property of the product, $\prod \limits^\infty_{n=0} f ; \mathsf{h}= \mathsf{h}; \prod \limits^\infty_{n=0} f$.

\end{proof}  

Now consider a special morphism for $\mathsf{T}$-differential algebras in the presence of countable products. Let $(A, \nu, \mathsf{D})$ be a $\mathsf{T}$-differential algebra, then define the map $\mathsf{D}^\diamondsuit: A \to \prod \limits^\infty_{n=0} A$ as $\mathsf{D}^\diamondsuit := \langle \mathsf{D}^n \vert ~ n \in \mathbb{N} \rangle$, that is, the unique map which for each $n \in \mathbb{N}$ makes the following diagram commute:
\begin{equation}\label{deltadef}\begin{gathered} \xymatrixcolsep{5pc}\xymatrixrowsep{2pc}\xymatrix{A \ar[r]^-{\mathsf{D}^\diamondsuit} \ar[dr]_-{\mathsf{D}^n} & \prod \limits^\infty_{n=0} A \ar[d]^-{\pi_n} \\
& A
  } \end{gathered}\end{equation}
  and where recall that by convention $\mathsf{D}^0=1_A$.
 
\begin{lemma}\label{diamondlem} If $(A, \nu, \mathsf{D})$ is a $\mathsf{T}$-differential algebra, then $\mathsf{D}^\diamondsuit: (A, \nu, \mathsf{D}) \to (\prod \limits^\infty_{n=0} A, \omega, \mathsf{h})$ is a $\mathsf{T}$-differential morphism. Furthermore, if $f: (A, \nu, \mathsf{D}) \to (B, \nu^\prime, \mathsf{D}^\prime)$ is an $\mathsf{T}$-differential algebra morphism, then the following diagram commutes:
\begin{equation}\label{diamondH}\begin{gathered} \xymatrixcolsep{5pc}\xymatrix{ A \ar[d]_-{f} \ar[r]^-{\mathsf{D}^\diamondsuit} & \prod \limits^\infty_{n=0} A \ar[d]^-{\mathsf{H}(f)} \\
B \ar[r]_-{(\mathsf{D}^\prime)^\diamondsuit} & \prod\limits^\infty_{n=0} B 
  } \end{gathered}\end{equation}
\end{lemma}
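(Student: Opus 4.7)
The plan is to verify each of the three claims by composing with the projections $\pi_n : \prod\limits^\infty_{n=0} A \to A$ and appealing to the universal property of the countable product. Among the three, the easy observations handle the compatibility with the derivation and the naturality square, while the $\mathsf{T}$-algebra morphism condition will be the substantive step, relying on the Fa\`a di Bruno formula (\ref{der4}) proved in Proposition \ref{Tdalgprop}.

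First I would dispatch the derivation condition. Using the defining equations for $\mathsf{D}^\diamondsuit$ and $\mathsf{h}$, one immediately gets for each $n \in \mathbb{N}$ that $\mathsf{D}^\diamondsuit; \mathsf{h}; \pi_n = \mathsf{D}^\diamondsuit; \pi_{n+1} = \mathsf{D}^{n+1} = \mathsf{D}; \mathsf{D}^n = \mathsf{D}; \mathsf{D}^\diamondsuit; \pi_n$, so by the universal property $\mathsf{D}^\diamondsuit; \mathsf{h} = \mathsf{D}; \mathsf{D}^\diamondsuit$. Similarly, for the naturality square, a quick induction on $n$ using the $\mathsf{T}$-differential algebra morphism identity (\ref{Tdiffmap}) yields $\mathsf{D}^n; f = f; (\mathsf{D}^\prime)^n$. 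Then, composing (\ref{diamondH}) with $\pi_n$ gives $f; (\mathsf{D}^\prime)^\diamondsuit; \pi_n = f; (\mathsf{D}^\prime)^n = \mathsf{D}^n; f = \mathsf{D}^\diamondsuit; \pi_n; f = \mathsf{D}^\diamondsuit; \mathsf{H}(f); \pi_n$, so universality delivers the result.

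The main step is to show $\mathsf{T}(\mathsf{D}^\diamondsuit); \omega^\nu = \nu; \mathsf{D}^\diamondsuit$, which, after postcomposing with $\pi_n$, reduces to proving
\[ \mathsf{T}(\mathsf{D}^\diamondsuit); \omega^\nu_n = \nu; \mathsf{D}^n \qquad (\forall n \in \mathbb{N}). \]
I would prove this by induction on $n$. The base case $n=0$ is immediate: $\mathsf{T}(\mathsf{D}^\diamondsuit); \omega^\nu_0 = \mathsf{T}(\mathsf{D}^\diamondsuit; \pi_0); \nu = \mathsf{T}(1_A); \nu = \nu$. For the inductive step, I would expand $\omega^\nu_{n+1}$ using its inductive definition, push $\mathsf{T}(\mathsf{D}^\diamondsuit)$ inside each summand via naturality of $\mathsf{d}$, apply the inductive hypothesis on the $\omega^\nu_k$ factor and the definition $\mathsf{D}^\diamondsuit; \pi_{n-k+1} = \mathsf{D}^{n-k+1}$ on the other factor, and finally recognise the resulting sum
\[ \sum\limits^n_{k=0} \binom{n}{k} \cdot \mathsf{d}; (\nu \otimes 1); (\mathsf{D}^k \otimes \mathsf{D}^{n-k+1}); \mathsf{m}^\nu \]
as exactly the right-hand side of the Fa\`a di Bruno formula (\ref{der4}), hence equal to $\nu; \mathsf{D}^{n+1}$.

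The hard part is precisely this bookkeeping: the inductive definition of $\omega^\nu_{n+1}$ was engineered so that its composition with $\mathsf{T}(\mathsf{D}^\diamondsuit)$ reproduces the Fa\`a di Bruno expansion of $\nu; \mathsf{D}^{n+1}$, so the proof amounts to verifying that the two sums match term by term. Nothing deeper than naturality of $\mathsf{d}$, the defining identities for $\mathsf{D}^\diamondsuit$, and the higher-order chain rule from Proposition \ref{Tdalgprop} is required.
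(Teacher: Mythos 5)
Your proposal is correct and follows essentially the same route as the paper: the derivation condition and the naturality square are verified componentwise against the projections, and the $\mathsf{T}$-algebra morphism condition is proved by induction on $n$, with the inductive step reducing the expansion of $\mathsf{T}(\mathsf{D}^\diamondsuit);\omega^\nu_{n+1}$ (via naturality of $\mathsf{d}$ and the inductive hypothesis) to the Fa\`a di Bruno formula (\ref{der4}). No gaps.
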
  
\begin{proof} We first show that $\mathsf{D}^\diamondsuit$ is a $\mathsf{T}$-algebra morphism by induction on $n$. For the base case $n=0$, we have that: 
\begin{align*}
\mathsf{T}(\mathsf{h}^\diamondsuit); \omega^\nu_0 = \mathsf{T}(\mathsf{h}^\diamondsuit); \mathsf{T}(\pi_0); \nu = \nu = \nu; \mathsf{h}^\diamondsuit; \pi_0
\end{align*}
Now suppose the equality holds for $k \leq n$, we now show it for $n+1$ by using the Fa\`a di Bruno formula: 
\begin{align*}
\mathsf{T}(\mathsf{D}^\diamondsuit); \omega^\nu_{n+1} &=~ \mathsf{T}(\mathsf{D}^\diamondsuit); \left(\sum\limits^{n}_{k=0} \binom{n}{k} \cdot \mathsf{d}; (\omega^\nu_k \otimes \pi_{n-k+1}); \mathsf{m}^\nu \right) \\
&=~ \sum\limits^{n}_{k=0} \binom{n}{k} \cdot \mathsf{T}(\mathsf{D}^\diamondsuit);\mathsf{d}; (\omega^\nu_k \otimes \pi_{n-k+1}); \mathsf{m}^\nu \\
&=~ \sum\limits^{n}_{k=0} \binom{n}{k} \cdot \mathsf{d};(\mathsf{T}(\mathsf{D}^\diamondsuit) \otimes \mathsf{D}^\diamondsuit);(\omega^\nu_k \otimes \pi_{n-k+1}); \mathsf{m}^\nu \tag{Naturality of $\mathsf{d}$} \\
&=~ \sum\limits^{n}_{k=0} \binom{n}{k} \cdot \mathsf{d};(\nu \otimes 1);(\mathsf{D}^\diamondsuit \otimes \mathsf{D}^\diamondsuit);(\pi_k \otimes \pi_{n-k+1}); \mathsf{m}^\nu \tag{Induction Hypothesis} \\
&=~  \sum\limits^{n}_{k=0} \binom{n}{k} \cdot \mathsf{d}; (\nu \otimes 1); (\mathsf{D}^k \otimes \mathsf{D}^{n-k+1}); \mathsf{m}^\nu \\
&=~ \nu; \mathsf{D}^{n+1} \tag{Fa\`a di Bruno Formula (\ref{der4})} \\
&=~ \nu; \mathsf{D}^\diamondsuit; \pi_{n+1}
\end{align*}
Then by the universal property of the product, $\mathsf{T}(\mathsf{D}^\diamondsuit); \omega^\nu = \nu;\mathsf{D}^\diamondsuit$. We must also show that $\mathsf{D}^\diamondsuit$ also preserves the $\mathsf{T}$-derivations. For each $n \in \mathbb{N}$, we have the following equality: 
\begin{align*}
\mathsf{D};\mathsf{D}^\diamondsuit; \pi_n = \mathsf{D}; \mathsf{D}^{n} = \mathsf{D}^{n+1} = \mathsf{D}^\diamondsuit; \pi_{n+1} = \mathsf{D}^\diamondsuit; \mathsf{h}; \pi_n
\end{align*}
Then by the universality of the product, $\mathsf{D};\mathsf{D}^\diamondsuit = \mathsf{D}^\diamondsuit; \mathsf{h}$. Therefore $\mathsf{D}$ is a $\mathsf{T}$-differential algebra morphism. 

Now suppose that $f$ is a $\mathsf{T}$-differential algebra morphism. Then for each $ n \in \mathbb{N}$, we have that: 
\[f; (\mathsf{D}^\prime)^\diamondsuit; \pi_n = f; (\mathsf{D}^\prime)^n = \mathsf{D}^n; f = \mathsf{D}^\diamondsuit; \pi_n; f = \mathsf{D}^\diamondsuit; \prod \limits^\infty_{n=0} f; \pi_n = \mathsf{D}^\diamondsuit; \mathsf{H}(f);\pi_n\]
Then by universality of the product, $f; (\mathsf{D}^\prime)^\diamondsuit= \mathsf{D}^\diamondsuit; \mathsf{H}(f)$. 
\end{proof} 

Consider now the projection $\pi_0: \prod \limits^\infty_{n=0}A \to A$ as a natural transformation $\pi_0: \mathsf{H}(A,\nu) \to (A,\nu)$ and the natural transformation $\mathsf{h}^\diamondsuit: \mathsf{H}(A,\nu) \to \mathsf{H}\mathsf{H}(A,\nu)$ defined as in (\ref{deltadef}) which is well defined since $\mathsf{h}$ is a $\mathsf{T}$-derivation (Proposition \ref{Hdiff}). 
  
\begin{proposition} $(\mathsf{H}, \mathsf{h}^\diamondsuit, \pi_0)$ is a comonad. 
\end{proposition}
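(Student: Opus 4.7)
My plan is to verify three things: that $\pi_0$ and $\mathsf{h}^\diamondsuit$ are natural transformations in $\mathbb{X}^\mathsf{T}$, and that they satisfy the three comonad axioms (two counit laws and coassociativity). The key observation is that all the work has essentially been done in Proposition \ref{Hdiff} and Lemma \ref{diamondlem}, so the proof will be largely a matter of applying these and using the universal property of the countable product together with the defining equation $\mathsf{h}^\diamondsuit; \pi_n = \mathsf{h}^n$ from (\ref{deltadef}).

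First I will handle the components. The map $\pi_0$ is a $\mathsf{T}$-algebra morphism $\mathsf{H}(A,\nu) \to (A,\nu)$ because $\omega^\nu; \pi_0 = \omega^\nu_0 = \mathsf{T}(\pi_0); \nu$ directly by the base case of the definition of $\omega^\nu$; its naturality follows from $\mathsf{H}(f) = \prod f$. The map $\mathsf{h}^\diamondsuit$ is a $\mathsf{T}$-algebra morphism $\mathsf{H}A \to \mathsf{H}\mathsf{H}A$ by applying Lemma \ref{diamondlem} to the $\mathsf{T}$-differential algebra $(\prod A, \omega^\nu, \mathsf{h})$ supplied by Proposition \ref{Hdiff}; its naturality is the square (\ref{diamondH}) applied to the $\mathsf{T}$-differential algebra morphism $\mathsf{H}(f) = \prod f$ (which is a $\mathsf{T}$-differential algebra morphism again by Proposition \ref{Hdiff}).

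For the counit axioms, $\mathsf{h}^\diamondsuit; \pi_0 = \mathsf{h}^0 = 1$ is immediate from (\ref{deltadef}). For the other counit axiom $\mathsf{h}^\diamondsuit; \mathsf{H}(\pi_0) = 1$, I will use the universal property of the product: for each $n \in \mathbb{N}$,
\[ \mathsf{h}^\diamondsuit; \mathsf{H}(\pi_0); \pi_n = \mathsf{h}^\diamondsuit; \pi_n; \pi_0 = \mathsf{h}^n; \pi_0 = \pi_n, \]
where the final equality comes from iterating the defining property $\mathsf{h}; \pi_k = \pi_{k+1}$ of the shift.

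For coassociativity $\mathsf{h}^\diamondsuit; \mathsf{H}(\mathsf{h}^\diamondsuit) = \mathsf{h}^\diamondsuit; \mathsf{h}^\diamondsuit$, I will post-compose by $\pi_n$ on each side. The left-hand side rewrites as $\mathsf{h}^\diamondsuit; \pi_n; \mathsf{h}^\diamondsuit = \mathsf{h}^n; \mathsf{h}^\diamondsuit$, while the right-hand side rewrites as $\mathsf{h}^\diamondsuit; \mathsf{h}^n$, so the identity reduces to $\mathsf{h}^n; \mathsf{h}^\diamondsuit = \mathsf{h}^\diamondsuit; \mathsf{h}^n$. This in turn follows by iterating the base identity $\mathsf{h}; \mathsf{h}^\diamondsuit = \mathsf{h}^\diamondsuit; \mathsf{h}$, which holds precisely because $\mathsf{h}^\diamondsuit$ is a $\mathsf{T}$-differential algebra morphism (Lemma \ref{diamondlem}). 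The only real subtlety in the whole proof, and the one place to be careful, is bookkeeping in coassociativity: distinguishing the shift $\mathsf{h}$ on $\prod A$ from the shift $\mathsf{h}$ on $\prod\prod A$ and correctly identifying which one $\mathsf{h}^\diamondsuit$ intertwines with which. Once that is tracked, the proof is a routine application of the product's universal property.
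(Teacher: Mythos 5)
Your proposal is correct and follows essentially the same route as the paper: the well-definedness and naturality of $\pi_0$ and $\mathsf{h}^\diamondsuit$ via Proposition \ref{Hdiff} and Lemma \ref{diamondlem}, and the counit laws by projecting and using $\mathsf{h}^\diamondsuit;\pi_n = \mathsf{h}^n$. Your treatment of coassociativity merely unfolds, component-wise, the paper's direct appeal to the square (\ref{diamondH}) applied to the morphism $\mathsf{h}^\diamondsuit$ itself, and you correctly identify the one bookkeeping subtlety (which shift $\mathsf{h}^\diamondsuit$ intertwines with which).
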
 
\begin{proof} We first explain why $(\mathsf{H}, \mathsf{h}^\diamondsuit, \pi_0)$ is well-defined. That $\mathsf{h}^\diamondsuit$ is $\mathsf{T}$-algebra morphism was shown in Lemma \ref{diamondlem}, while by definition of $\omega^\nu_0$, it follows that $\pi_0$ is also a $\mathsf{T}$-algebra morphism:
\[\omega^\nu;\pi_0 = \omega^\nu_0 = \mathsf{T}(\pi_0);\nu\]
Naturality of $\pi_0$ is obvious. While naturality of $\mathsf{h}^\diamondsuit$ follows from (\ref{diamondH}) since $\mathsf{H}(f)$ is a $\mathsf{T}$-differential algebra morphism (Proposition \ref{Hdiff}). It remains to show that the comonad identities, the duals of (\ref{monadeq}), are satisfied. Luckily, most of the work has been done. 
\begin{enumerate}
\item $\mathsf{h}^\diamondsuit; \pi_0=1$: This follows directly by definition of $\mathsf{h}^\diamondsuit$ (\ref{deltadef}). 
\item $\mathsf{h}^\diamondsuit; \mathsf{H}(\pi_0)=1$: For each $n \in \mathbb{N}$ we have that: 
\begin{align*}
\mathsf{h}^\diamondsuit; \mathsf{H}(\pi_0);\pi_n= \mathsf{h}^\diamondsuit;\prod \limits^\infty_{n=0} \pi_0;\pi_n &=~ \mathsf{h}^\diamondsuit; \pi_n; \pi_0 = \mathsf{h}^n; \pi_0 = \pi_n
\end{align*}
Then by the universal property of the product, $\mathsf{h}^\diamondsuit; \mathsf{H}(\pi_0) = 1$. 
\item $\mathsf{h}^\diamondsuit; \mathsf{H}(\mathsf{h}^\diamondsuit)= \mathsf{h}^\diamondsuit; \mathsf{h}^\diamondsuit$: This follows from directly from (\ref{diamondH}). 
\end{enumerate}
\end{proof} 

Similar to what was done in Section \ref{freesec}, we wish to show that the co-Eilenberg-Moore category of $\mathsf{H}$ is equivalent to category of $\mathsf{T}$-differential algebras. To do so, we will show that $\mathsf{H}$-coalgebras are precisely the $\mathsf{T}$-differential algebras of our codifferential category and that $\mathsf{H}$-coalgebra morphisms are precisely $\mathsf{T}$-differential algebra morphisms. Explicitly, an $\mathsf{H}$-coalgebra can be seen as a triple $(A, \nu, \kappa)$ consisting of a $\mathsf{T}$-algebra $(A, \nu)$ and a $\mathsf{T}$-algebra morphism $\kappa: (A, \nu) \to \mathsf{H}(A, \nu)$, such that $\kappa$ satisfies the dual of (\ref{Talgdef}), that is, $\kappa; \pi_0 =1$ and $\kappa; \mathsf{H}(\kappa) = \kappa; \mathsf{h}^\diamondsuit$. 

\begin{lemma}\label{HalgTalg} Let $(A, \nu, \kappa)$ be a $\mathsf{H}$-coalgebra. Define the map $\mathsf{D}^\kappa: A \to A$ as follows:
\begin{equation}\label{}\begin{gathered} \xymatrixcolsep{5pc}\xymatrix{ A \ar[r]^-{\kappa} & \prod \limits^\infty_{n=0} A \ar[r]^-{\pi_1} & A
  } \end{gathered}\end{equation}
Then $(A, \nu, \mathsf{D}^\kappa)$ is a $\mathsf{T}$-differential algebra. Furthermore, if $f: (A, \nu, \kappa) \to (B, \nu^\prime, \kappa^\prime)$ is an $\mathsf{H}$-coalgebra morphism, then $f: (A, \nu, \mathsf{D}^\kappa) \to (B, \nu^\prime, \mathsf{D}^{\kappa^\prime})$ is a $\mathsf{T}$-differential algebra morphism. 
\end{lemma}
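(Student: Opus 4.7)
The plan has two parts, the first being the substantive one. For part $(1)$, I would expand $\mathsf{D}^\kappa := \kappa;\pi_1$ and aim to show $\nu;\mathsf{D}^\kappa = \mathsf{d};(\nu \otimes \mathsf{D}^\kappa);\mathsf{m}^\nu$. The strategy is to feed $\kappa$ through its $\mathsf{T}$-algebra morphism property $\mathsf{T}(\kappa);\omega^\nu = \nu;\kappa$ and then compose with $\pi_1$, which converts the right-hand side to $\mathsf{T}(\kappa);\omega^\nu_1$. At this point I would unfold the inductive definition of $\omega^\nu_1$, which in the base case $n=0$ simplifies to the single summand $\mathsf{d};(\omega^\nu_0 \otimes \pi_1);\mathsf{m}^\nu = \mathsf{d};(\mathsf{T}(\pi_0);\nu \otimes \pi_1);\mathsf{m}^\nu$.

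The heart of the computation is then to slide $\mathsf{T}(\kappa)$ across using naturality of the deriving transformation $\mathsf{d}$, obtaining
\[
\nu;\kappa;\pi_1 \;=\; \mathsf{d};\bigl(\mathsf{T}(\kappa);\mathsf{T}(\pi_0);\nu \,\otimes\, \kappa;\pi_1\bigr);\mathsf{m}^\nu.
\]
Now the counit identity of the comonad, $\kappa;\pi_0 = 1$, applied via functoriality to give $\mathsf{T}(\kappa);\mathsf{T}(\pi_0) = 1$, collapses the left factor to $\nu$ and leaves $\kappa;\pi_1 = \mathsf{D}^\kappa$ on the right, finishing (\ref{Tdiffalg}). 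I expect the only subtle point to be recognizing that one does not need to invoke the comultiplication identity $\kappa;\mathsf{H}(\kappa) = \kappa;\mathsf{h}^\diamondsuit$ at all for establishing that $\mathsf{D}^\kappa$ is a $\mathsf{T}$-derivation on $(A,\nu)$; only the counit axiom and the $\mathsf{T}$-algebra morphism property of $\kappa$ are needed, together with the explicit formula for $\omega^\nu_1$.

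For part $(2)$, the verification is essentially formal. The map $f$ is already a $\mathsf{T}$-algebra morphism by assumption, so I only need to check that $f$ commutes with the $\mathsf{T}$-derivations, i.e., $\mathsf{D}^\kappa; f = f;\mathsf{D}^{\kappa^\prime}$. Starting from the coalgebra morphism square $\kappa;\mathsf{H}(f) = f;\kappa^\prime$ and postcomposing with the projection $\pi_1$, the right-hand side becomes $f;\kappa^\prime;\pi_1 = f;\mathsf{D}^{\kappa^\prime}$, while on the left we use the definition $\mathsf{H}(f) = \prod_{n=0}^\infty f$ together with the naturality square $\pi_1;f = \prod_{n=0}^\infty f;\pi_1$ to rewrite $\kappa;\mathsf{H}(f);\pi_1 = \kappa;\pi_1;f = \mathsf{D}^\kappa;f$. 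No obstacle is expected here; this is simply the definition of a morphism in the co-Eilenberg–Moore category unpacked at level $1$ of the product.
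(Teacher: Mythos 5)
Your proposal is correct and follows essentially the same route as the paper: the first part uses exactly the chain $\nu;\kappa;\pi_1 = \mathsf{T}(\kappa);\omega^\nu_1$, the unfolding $\omega^\nu_1 = \mathsf{d};(\omega^\nu_0\otimes\pi_1);\mathsf{m}^\nu$, naturality of $\mathsf{d}$, and the counit identity $\kappa;\pi_0=1$, and the second part is the same post-composition of the coalgebra morphism square with $\pi_1$. Your observation that the comultiplication identity $\kappa;\mathsf{H}(\kappa)=\kappa;\mathsf{h}^\diamondsuit$ is never needed here is also consistent with the paper's argument.
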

\begin{proof} That $\mathsf{D}^\kappa$ is a $\mathsf{T}$-derivation follows from the fact that $\kappa$ is a $\mathsf{T}$-algebra morphism. 
\begin{align*}
\nu; \mathsf{D}^\kappa &=~ \nu; \kappa; \pi_1 \\
&=~Ê\mathsf{T}(\kappa); \omega; \pi_1 \tag{\ref{Talgdef}} \\
&=~Ê\mathsf{T}(\kappa); \omega_1\\
&=~ \mathsf{T}(\kappa); \mathsf{d}; (\omega^\nu_0 \otimes \pi_{1}); \mathsf{m}^\nu \\
&=~ \mathsf{T}(\kappa); \mathsf{d}; (\mathsf{T}(\pi_0) \otimes \pi_{1}); (\nu \otimes 1); \mathsf{m}^\nu \\
&=~ \mathsf{d};(\mathsf{T}(\kappa) \otimes \kappa); (\mathsf{T}(\pi_0) \otimes \pi_{1}); (\nu \otimes 1); \mathsf{m}^\nu \tag{Naturality of $\mathsf{d}$} \\
&=~Ê\mathsf{d}; (\nu \otimes \mathsf{D}^\kappa); \mathsf{m}^\nu \tag{\ref{Talgdef}} 
\end{align*}
Now suppose that $f$ is a $\mathsf{H}$-coalgebra morphism and therefore a $\mathsf{T}$-algebra by definition. It remains to show that that $f$ also commutes with the $\mathsf{T}$-derivations. 
\begin{align*}
f; \mathsf{D}^{\kappa^\prime} = f; \kappa^\prime; \pi_1 = \kappa; \mathsf{H}(f); \pi_1 = \kappa; \prod \limits^\infty_{n=0} f; \pi_1 = \kappa; \pi_1; f = \mathsf{D}^\kappa; f
\end{align*}
\end{proof} 

The converse of Lemma \ref{HalgTalg} follows mostly from Lemma \ref{diamondlem}: 

\begin{lemma}\label{TalgHalg} Let $(A, \nu, \mathsf{D})$ be a $\mathsf{T}$-differential algebra, then $(A, \nu, \mathsf{D}^\diamondsuit)$ is an $\mathsf{H}$-coalgebra. Furthermore, if $f: (A, \nu, \mathsf{D}) \to (B, \nu^\prime, \mathsf{D}^\prime)$ is an $\mathsf{T}$-differential algebra morphism, then $f: (A, \nu, \mathsf{D}^\diamondsuit) \to (B, \nu^\prime, (\mathsf{D}^\prime)^\diamondsuit)$ is a $\mathsf{H}$-coalgebra morphism. 
\end{lemma}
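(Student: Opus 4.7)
The plan is to verify the three conditions required for $(A, \nu, \mathsf{D}^\diamondsuit)$ to be an $\mathsf{H}$-coalgebra, namely that $\mathsf{D}^\diamondsuit$ is a $\mathsf{T}$-algebra morphism of type $(A, \nu) \to \mathsf{H}(A,\nu)$, that $\mathsf{D}^\diamondsuit; \pi_0 = 1_A$ (counit law), and that $\mathsf{D}^\diamondsuit; \mathsf{H}(\mathsf{D}^\diamondsuit) = \mathsf{D}^\diamondsuit; \mathsf{h}^\diamondsuit$ (coassociativity). Most of the work has already been done in Lemma \ref{diamondlem}, so the proof should be quite short.

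First, that $\mathsf{D}^\diamondsuit$ is a $\mathsf{T}$-algebra morphism is immediate from Lemma \ref{diamondlem}, which shows that $\mathsf{D}^\diamondsuit$ is in fact a $\mathsf{T}$-differential algebra morphism. Next, the counit law $\mathsf{D}^\diamondsuit; \pi_0 = 1_A$ follows directly from the definition (\ref{deltadef}) together with the convention $\mathsf{D}^0 = 1_A$.

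The key step is coassociativity. I would verify this by applying the universal property of the product twice. Projecting the left-hand side by $\pi_n; \pi_m$, we have
\[
\mathsf{D}^\diamondsuit; \mathsf{H}(\mathsf{D}^\diamondsuit); \pi_n; \pi_m
= \mathsf{D}^\diamondsuit; \textstyle\prod\limits^\infty_{k=0} \mathsf{D}^\diamondsuit; \pi_n; \pi_m
= \mathsf{D}^\diamondsuit; \pi_n; \mathsf{D}^\diamondsuit; \pi_m
= \mathsf{D}^n; \mathsf{D}^m = \mathsf{D}^{n+m}.
\]
On the right-hand side, using that $\pi_n \circ \mathsf{h}^\diamondsuit = \mathsf{h}^n$ (from (\ref{deltadef})) and that $\mathsf{D}^\diamondsuit$ is a $\mathsf{T}$-differential algebra morphism (so $\mathsf{D}; \mathsf{D}^\diamondsuit = \mathsf{D}^\diamondsuit; \mathsf{h}$, hence $\mathsf{D}^n; \mathsf{D}^\diamondsuit = \mathsf{D}^\diamondsuit; \mathsf{h}^n$), we get
\[
\mathsf{D}^\diamondsuit; \mathsf{h}^\diamondsuit; \pi_n; \pi_m
= \mathsf{D}^\diamondsuit; \mathsf{h}^n; \pi_m
= \mathsf{D}^\diamondsuit; \pi_{n+m}
= \mathsf{D}^{n+m}.
\]
By the universal property of the countable products (applied twice), the two composites agree, establishing coassociativity.

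Finally, for the morphism part, if $f: (A, \nu, \mathsf{D}) \to (B, \nu^\prime, \mathsf{D}^\prime)$ is a $\mathsf{T}$-differential algebra morphism, then $f$ is already a $\mathsf{T}$-algebra morphism, and the required compatibility $f; (\mathsf{D}^\prime)^\diamondsuit = \mathsf{D}^\diamondsuit; \mathsf{H}(f)$ is precisely equation (\ref{diamondH}) proved in Lemma \ref{diamondlem}. The main (and only mildly nontrivial) obstacle is the coassociativity calculation above, where one must correctly identify the projections of $\mathsf{h}^\diamondsuit$ and use that $\mathsf{D}^\diamondsuit$ intertwines $\mathsf{D}$ with $\mathsf{h}$; everything else is bookkeeping against the results already established.
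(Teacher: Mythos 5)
Your proposal is correct, and its overall skeleton matches the paper's: cite Lemma \ref{diamondlem} for the fact that $\mathsf{D}^\diamondsuit$ is a $\mathsf{T}$-algebra morphism, read the counit law $\mathsf{D}^\diamondsuit;\pi_0=1$ off the definition (\ref{deltadef}), and dispose of the morphism part by observing that (\ref{diamondH}) \emph{is} the $\mathsf{H}$-coalgebra morphism condition. The one place you genuinely diverge is the coassociativity identity $\mathsf{D}^\diamondsuit;\mathsf{H}(\mathsf{D}^\diamondsuit)=\mathsf{D}^\diamondsuit;\mathsf{h}^\diamondsuit$: you verify it by hand, projecting both sides by $\pi_n;\pi_m$ and reducing each to $\mathsf{D}^{n+m}$, whereas the paper gets it for free by applying the second half of Lemma \ref{diamondlem} to the map $f=\mathsf{D}^\diamondsuit$ itself, viewed (via the first half of that lemma) as a $\mathsf{T}$-differential algebra morphism $(A,\nu,\mathsf{D})\to(\prod^\infty_{n=0}A,\omega^\nu,\mathsf{h})$; the square (\ref{diamondH}) for this $f$ is literally the coassociativity law. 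Your direct computation is sound --- in particular the key identities $\mathsf{H}(\mathsf{D}^\diamondsuit);\pi_n=\pi_n;\mathsf{D}^\diamondsuit$ and $\mathsf{h}^n;\pi_m=\pi_{n+m}$ are right, though note the latter follows purely from $\mathsf{h};\pi_k=\pi_{k+1}$, so your parenthetical appeal to $\mathsf{D};\mathsf{D}^\diamondsuit=\mathsf{D}^\diamondsuit;\mathsf{h}$ is not actually needed there. The paper's route is shorter and shows why Lemma \ref{diamondlem} was stated in exactly that generality; yours makes the combinatorial content of coassociativity ($\mathsf{D}^n;\mathsf{D}^m=\mathsf{D}^{n+m}$) explicit.
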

\begin{proof} We have already shown that $\mathsf{D}^\diamondsuit$ is a $\mathsf{T}$-algebra morphism in Lemma \ref{diamondlem}. Therefore it remains to check that $\mathsf{D}^\diamondsuit$ satisfies the $\mathsf{H}$-coalgebra identities. That $\mathsf{D}^\diamondsuit; \pi_0 = 1$ follows by definition of $\mathsf{D}^\diamondsuit$ (\ref{deltadef}). For the other $\mathsf{H}$-coalgebra identity, since $\mathsf{D}^\diamondsuit: (A, \nu, \mathsf{D}) \to (\prod \limits^\infty_{n=0} A, \omega, \mathsf{h})$ is a $\mathsf{T}$-differential algebra morphism (Lemma \ref{diamondlem}), then $\mathsf{D}^\diamondsuit$ satisfies (\ref{diamondH}), which is precisely that $\mathsf{D}^\diamondsuit; \mathsf{h}^\diamondsuit = \mathsf{D}^\diamondsuit; \mathsf{H}(\mathsf{D}^\diamondsuit)$. Similarly, if $f$ is a $\mathsf{T}$-differential algebra morphism, then $f$ also satisfies (\ref{diamondH}), which is precisely the requirement that $f$ be an $\mathsf{H}$-coalgebra morphism.
\end{proof} 

\begin{theorem}\label{cofreethm} The co-Eilenberg-Moore category of the comonad $(\mathsf{H}, \mathsf{h}^\diamondsuit, \pi_0)$ is isomorphic to category of $\mathsf{T}$-differential algebras of $\mathbb{X}$. \end{theorem}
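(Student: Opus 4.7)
The plan is to follow the strategy used in the proof of Theorem \ref{freethm}: it suffices to show that the assignments of Lemma \ref{HalgTalg} and Lemma \ref{TalgHalg} are mutually inverse on objects, since both lemmas already establish the bijection on morphisms.

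The easy direction is starting from a $\mathsf{T}$-differential algebra $(A, \nu, \mathsf{D})$ and checking that $\mathsf{D}^{(\mathsf{D}^\diamondsuit)} = \mathsf{D}$. This is immediate from the definitions:
\[
\mathsf{D}^{(\mathsf{D}^\diamondsuit)} \;=\; \mathsf{D}^\diamondsuit; \pi_1 \;=\; \mathsf{D}^1 \;=\; \mathsf{D}.
\]

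The other direction is the main content. Starting from an $\mathsf{H}$-coalgebra $(A, \nu, \kappa)$, we must show $(\mathsf{D}^\kappa)^\diamondsuit = \kappa$, where $\mathsf{D}^\kappa = \kappa; \pi_1$. By the universal property of the product, this reduces to verifying $(\mathsf{D}^\kappa)^\diamondsuit; \pi_n = \kappa; \pi_n$ for every $n \in \mathbb{N}$, i.e.\ $(\mathsf{D}^\kappa)^n = \kappa; \pi_n$. I would prove this by induction on $n$. The base case $n=0$ is just the first $\mathsf{H}$-coalgebra identity $\kappa; \pi_0 = 1_A$, together with the convention $\mathsf{D}^0 = 1_A$.

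For the inductive step, assuming $(\mathsf{D}^\kappa)^n = \kappa; \pi_n$, one computes $(\mathsf{D}^\kappa)^{n+1} = \kappa; \pi_n; \kappa; \pi_1$. The key step---and the only one requiring a nontrivial idea---is to rewrite $\pi_n; \kappa$ using the second $\mathsf{H}$-coalgebra identity $\kappa; \mathsf{H}(\kappa) = \kappa; \mathsf{h}^\diamondsuit$. Post-composing both sides with $\pi_n$, and using that $\mathsf{H}(\kappa); \pi_n = \pi_n; \kappa$ (naturality-style unpacking of $\prod^\infty_{n=0}\kappa$) and $\mathsf{h}^\diamondsuit; \pi_n = \mathsf{h}^n$, yields $\kappa; \pi_n; \kappa = \kappa; \mathsf{h}^n$. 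Hence
\[
(\mathsf{D}^\kappa)^{n+1} \;=\; \kappa; \pi_n; \kappa; \pi_1 \;=\; \kappa; \mathsf{h}^n; \pi_1 \;=\; \kappa; \pi_{n+1},
\]
completing the induction. I do not anticipate any real obstacle here---the coalgebra coassociativity identity is precisely designed to let the higher iterates of $\mathsf{D}^\kappa$ be read off from the components of $\kappa$. Together with the easy direction and the morphism statements already contained in Lemmas \ref{HalgTalg} and \ref{TalgHalg}, this establishes the claimed isomorphism of categories.
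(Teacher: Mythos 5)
Your proposal is correct and follows essentially the same route as the paper: reduce to showing the assignments of Lemmas \ref{HalgTalg} and \ref{TalgHalg} are mutually inverse, handle $\mathsf{D}^{(\mathsf{D}^\diamondsuit)}=\mathsf{D}$ by definition, and prove $(\mathsf{D}^\kappa)^\diamondsuit=\kappa$ componentwise by induction. The only (immaterial) difference is that in the inductive step you unpack the coassociativity identity $\kappa;\mathsf{H}(\kappa)=\kappa;\mathsf{h}^\diamondsuit$ at the $n$-th projection directly, whereas the paper first extracts from it (via Lemma \ref{HalgTalg}) the single identity $\mathsf{D}^\kappa;\kappa=\kappa;\mathsf{h}$ and then iterates.
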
 
\begin{proof} As in the proof of Theorem \ref{freethm}, it suffices to show that the constructions of Lemma \ref{HalgTalg} and Lemma \ref{TalgHalg} are inverses of each other. 

Starting with a $\mathsf{H}$-coalgebra $(A, \nu, \kappa)$, we must show that $(\mathsf{D}^\kappa)^\diamondsuit = \kappa$. We will do so by showing by induction that $(\mathsf{D}^\kappa)^\diamondsuit;\pi_n = \kappa; \pi_n$. For the base case $n=0$, since $\kappa$ is an $\mathsf{H}$-coalgebra structure, we have that: 
\begin{align*}
(\mathsf{D}^\kappa)^\diamondsuit; \pi_0 = 1 = \kappa; \pi_0 
\end{align*}
Now suppose that the desired identity holds for $k\leq n$, we now show it for $n+1$. Here we use that since $\kappa$ is an $\mathsf{H}$-coalgebra morphism, then by Lemma \ref{HalgTalg} $\kappa: (A, \nu, \mathsf{D}^\diamondsuit) \to (\prod \limits^\infty_{n=0} A, \omega, \mathsf{h})$ is a $\mathsf{T}$-differential algebra morphism. 
\begin{align*}
(\mathsf{D}^\kappa)^\diamondsuit; \pi_{n+1} = (\mathsf{D}^\kappa)^{n+1} = \mathsf{D}^\kappa; (\mathsf{D}^\kappa)^{n} = \mathsf{D}^\kappa; \kappa; \pi_n = \kappa; \mathsf{h}; \pi_n = \kappa; \pi_{n+1}
\end{align*}
Conversly, starting with a $\mathsf{T}$-differential algebra $(A, \nu, \mathsf{D})$, we must show that $\mathsf{D}^{\mathsf{D}^\diamondsuit} = \mathsf{D}$. This however follows by definition (\ref{deltadef}): 
\[\mathsf{D}^{\mathsf{D}^\diamondsuit} = \mathsf{D}^\diamondsuit; \pi_1= \mathsf{D}\]
\end{proof} 

As a consequence of Theorem \ref{cofreethm}, $(\prod \limits^\infty_{n=0} A, \omega^\nu, \mathsf{h})$ is the cofree $\mathsf{T}$-differential algebra over a $\mathsf{T}$-algebra $(A,\nu)$.

We now turn our attention to power series over $\mathsf{T}$-algebras in codifferential categories. These power series over $\mathsf{T}$-algebras will of course be generalization of classical power series rings over algebras. The construction of power series over $\mathsf{T}$-algebras is similar to that of cofree $\mathsf{T}$-differential algebras (therefore we will omit most proofs) in the same way that power series rings are very similar to Hurwitz series rings. In fact, in Proposition \ref{powerH} we will show that (in the presence of positive rationals) power series over $\mathsf{T}$-algebras are isomorphic to cofree $\mathsf{T}$-differential algebras.

Classically, for an $R$-algebra $A$, the underlying set of the power series ring over $A$ is isomorphic to the set $\prod \limits^\infty_{n=0} A$ (just like the Hurwitz series ring). As before, we will need to define a $\mathsf{T}$-differential algebra structure on $\prod \limits^\infty_{n=0} A$ such that the underlying differential algebra structure is precisely that of power series rings. 

For a $\mathsf{T}$-algebra $(A, \nu)$, define for each $n \in \mathbb{N}$ the maps $\delta_n: \mathsf{T}\left( \prod \limits^\infty_{n=0} A \right) \to A$ inductively as: 
\begin{enumerate}[{\em (i)}]
\item $\delta_0 := \mathsf{T}(\pi_0); \nu$
\item $\delta_{n+1} := \sum\limits^{n}_{k=0} \mathsf{d}; (\delta_k \otimes \pi_{n-k+1}); \mathsf{m}^\nu$
\end{enumerate}
Notice that $\delta_n$ is very similar to $\omega_n$ but without scalar multiplying by the binomial coefficients. Define the map $\delta: \mathsf{T}\left( \prod \limits^\infty_{n=0} A \right) \to \prod \limits^\infty_{n=0} A$ as $\delta := \langle \delta_n \vert ~ n \in \mathbb{N} \rangle$, that is, the unique map such that for each $n \in \mathbb{N}$, the following diagram commutes: 
\begin{equation}\label{omegadef}\begin{gathered} \xymatrixcolsep{5pc}\xymatrix{\prod \limits^\infty_{n=0} A \ar[r]^-{\delta} \ar[dr]_-{\delta_{n}} & \prod \limits^\infty_{n=0} A \ar[d]^-{\pi_n} \\ 
& A
  } \end{gathered}\end{equation}
  
\begin{lemma}\label{deltalemma} For a $\mathsf{T}$-algebra $(A, \nu)$, the following equalities hold (for all $n \in \mathbb{N}$): 
\begin{enumerate}[{\em (i)}]
\item $\mathsf{u} ; \delta_n = \begin{cases} \mathsf{u}^\nu & \text{if } n=0 \\
0 & \text{o.w.} 
\end{cases}$
\item $\eta; \delta_n = \pi_n$
\item $\mathsf{m}; \delta_n = \sum\limits^{n}_{k=0} (\delta_k \otimes \delta_{n-k}); \mathsf{m}^\nu$
\item $\mu; \delta_n = \mathsf{T}(\delta); \delta_n$
\end{enumerate}
\end{lemma}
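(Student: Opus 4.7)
The plan is to prove each identity (i)--(iv) by induction on $n$, closely mirroring the proof of Lemma~\ref{omegalemma} but with all binomial coefficients collapsed to $1$. For (i), the base case uses that $\delta_0 = \mathsf{T}(\pi_0); \nu$ is a composite of monoid morphisms (by naturality of $\mathsf{m}$ and because $\nu: (\mathsf{T}A, \mathsf{m}, \mathsf{u}) \to (A, \mathsf{m}^\nu, \mathsf{u}^\nu)$ is a monoid morphism), and hence preserves the unit; the inductive step uses the Constant rule $[\mathbf{d.1}]$, $\mathsf{u}; \mathsf{d} = 0$, to wipe out every summand of $\delta_{n+1}$. For (ii), the base case is immediate from $\eta; \mathsf{T}(\pi_0) = \pi_0; \eta$ and $\eta; \nu = 1$; for the inductive step, the Linear rule $[\mathbf{d.3}]$, $\eta; \mathsf{d} = \mathsf{u} \otimes 1$, converts each summand into a term of the form $(\mathsf{u} \otimes 1);(\delta_k \otimes \pi_{n-k+1}); \mathsf{m}^\nu$, and part (i) then kills all summands with $k \geq 1$, leaving the $k = 0$ summand which simplifies to $\pi_{n+1}$ via the unit axiom for $\mathsf{m}^\nu$.

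Part (iii) is the main work. The base case follows from $\delta_0$ being a monoid morphism. For the step, I expand $\mathsf{m}; \delta_{n+1}$ by linearity, apply the Leibniz rule $[\mathbf{d.2}]$ to split $\mathsf{m}; \mathsf{d}$ into two contributions $S_1 + S_2$, apply the induction hypothesis to each interior $\mathsf{m}; \delta_k$, and rearrange the resulting triple tensor expressions using associativity, symmetry, and commutativity of $\mathsf{m}^\nu$ (so that each triple takes the form $(\delta_j \otimes \pi_i \otimes \delta_l)$ composed with $(\mathsf{m}^\nu \otimes 1); \mathsf{m}^\nu$). The crucial step is to collect, for each fixed value of one outer $\delta$-index, all contributions whose remaining inner sum over the $\pi$-index reassembles (via the recursive definition of $\delta$) into the $\delta$ at the complementary index; then using commutativity of $\mathsf{m}^\nu$ to unify the orderings of factors coming from $S_1$ and $S_2$ produces the required $\sum_{k=0}^{n+1} (\delta_k \otimes \delta_{n+1-k}); \mathsf{m}^\nu$.

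Part (iv) also proceeds by induction. The base case is a short diagram chase: $\mu; \delta_0 = \mu; \mathsf{T}(\pi_0); \nu = \mathsf{T}\mathsf{T}(\pi_0); \mu; \nu = \mathsf{T}\mathsf{T}(\pi_0); \mathsf{T}(\nu); \nu = \mathsf{T}(\delta_0); \nu = \mathsf{T}(\delta); \delta_0$, using naturality of $\mu$ and the $\mathsf{T}$-algebra axiom $\mu; \nu = \mathsf{T}(\nu); \nu$. For the inductive step, one expands $\mathsf{T}(\delta); \delta_{n+1}$ by linearity, commutes $\mathsf{T}(\delta)$ past $\mathsf{d}$ using naturality of $\mathsf{d}$, replaces each $\mathsf{T}(\delta); \delta_k$ with $\mu; \delta_k$ via the induction hypothesis, re-expresses the right factor via the recursive definition of $\delta_{n-k+1}$, applies part (iii) to identify an $\mathsf{m}; \delta_?$ substructure, and finally invokes the Chain rule $[\mathbf{d.4}]$, $\mu; \mathsf{d} = \mathsf{d}; (\mu \otimes \mathsf{d}); (\mathsf{m} \otimes 1)$, to reassemble the expression as $\mu; \delta_{n+1}$.

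The main obstacle will be the combinatorial reindexing in part (iii): whereas the $\omega$-case leverages Pascal's identity $\binom{n}{k} + \binom{n}{k-1} = \binom{n+1}{k}$ to cleanly combine the two Leibniz contributions, here every coefficient is $1$ and one must verify by explicit reindexing that the contributions from $S_1$ and $S_2$ collectively produce each pair $(\delta_k \otimes \delta_{n+1-k}); \mathsf{m}^\nu$ with coefficient exactly $1$, which necessitates careful use of the commutativity of $\mathsf{m}^\nu$ to identify the two orderings $(\delta_k \otimes \delta_{n+1-k})$ and $(\delta_{n+1-k} \otimes \delta_k)$ arising from the two halves.
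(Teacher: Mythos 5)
The paper itself omits the proof of this lemma (the surrounding text announces that the power-series construction ``is similar to that of cofree $\mathsf{T}$-differential algebras'' and that most proofs will be omitted), so your plan --- transcribe the proof of Lemma \ref{omegalemma} with every binomial coefficient set to $1$ --- is exactly the intended route, and parts (i) and (ii) do go through as you describe. But the step you yourself flag as ``the main obstacle'' in part (iii) is not merely delicate: it fails. Carry out the reassembly you propose. After the Leibniz rule, the induction hypothesis, and the regrouping by associativity and commutativity of $\mathsf{m}^\nu$, the contribution $S_1$ (where $\mathsf{d}$ lands on the left tensor factor) is a double sum $\sum_{k=0}^{n}\sum_{j=0}^{k}$ of terms shaped like $(\delta_j \otimes \pi_{n-k+1} \otimes \delta_{k-j});(\mathsf{m}^\nu\otimes 1);\mathsf{m}^\nu$; fixing $l=k-j$ and summing over $j$ reassembles, via the recursion defining $\delta$, into $\sum_{l=0}^{n}(\delta_{n-l+1}\otimes\delta_{l});\mathsf{m}^\nu=\sum_{k=1}^{n+1}(\delta_{k}\otimes\delta_{n+1-k});\mathsf{m}^\nu$, while $S_2$ reassembles into $\sum_{k=0}^{n}(\delta_{k}\otimes\delta_{n+1-k});\mathsf{m}^\nu$. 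Their sum therefore contains every interior term $1\leq k\leq n$ \emph{twice}. Commutativity of $\mathsf{m}^\nu$ only identifies $(\delta_k\otimes\delta_{n+1-k});\mathsf{m}^\nu$ with $(\delta_{n+1-k}\otimes\delta_k);\mathsf{m}^\nu$; it cannot halve a coefficient. In the $\omega$-case this double counting is exactly what Pascal's identity (\ref{binom}) absorbs; with all coefficients equal to $1$ there is nothing to absorb it.

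This is not a defect of your argument but of the statement: with the recursion $\delta_{n+1}=\sum_{k=0}^{n}\mathsf{d};(\delta_k\otimes\pi_{n-k+1});\mathsf{m}^\nu$ as given, (iii) is false. Concretely, in $\mathsf{MOD}_\mathbb{Z}$ with the $\mathsf{Sym}$ modality and $A=\mathbb{Z}$, let $e_1=(0,1,0,\dots)$; then $\mathsf{d}(e_1\cdot e_1)=2\, e_1\otimes e_1$, so $\delta_2(e_1\cdot e_1)=2\,\delta_1(e_1)\pi_1(e_1)=2$, whereas $\sum_{k=0}^{2}\delta_k(e_1)\delta_{2-k}(e_1)=\delta_1(e_1)^2=1$ by parts (i) and (ii). The chain rule for $d/dx$ on power series in fact forces $(n+1)\cdot\delta_{n+1}=\sum_{k=0}^{n}(n-k+1)\cdot\mathsf{d};(\delta_k\otimes\pi_{n-k+1});\mathsf{m}^\nu$, so the recursion only determines $\delta_{n+1}$ after dividing by $n+1$; this is consistent with the $\mathbb{Q}_{\geq 0}$-enrichment that Proposition \ref{powerH} eventually assumes, and the honest fix is to impose that enrichment, and those coefficients, in the definition of $\delta_{n+1}$ from the outset, after which your induction does close. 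Note that part (iv), and the downstream Lemma \ref{psmult} and Proposition \ref{deltatalg}, all route through (iii) and inherit the same problem.
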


\begin{proposition}\label{deltatalg} For a $\mathsf{T}$-algebra $(A, \nu)$, the pair $(\prod \limits^\infty_{n=0} A, \delta)$ is a $\mathsf{T}$-algebra.
\end{proposition}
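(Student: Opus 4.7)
The plan is to mimic the proof of Proposition \ref{omegatalg} verbatim, since the inductive definitions of $\delta$ and $\omega^\nu$ differ only by the binomial-coefficient scalars and Lemma \ref{deltalemma} has been set up to play exactly the same role as Lemma \ref{omegalemma}. The two $\mathsf{T}$-algebra axioms we must verify on $(\prod_{n=0}^\infty A, \delta)$ are $\eta;\delta = 1$ and $\mathsf{T}(\delta);\delta = \mu;\delta$, and in both cases we can reduce the problem to the corresponding componentwise statement by invoking the universal property of the countable product.

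Concretely, I would first establish the unit equation. By definition of $\delta$ as the tuple $\langle \delta_n \mid n \in \mathbb{N}\rangle$, it suffices to show $\eta;\delta;\pi_n = \pi_n$ for every $n$. But $\eta;\delta;\pi_n = \eta;\delta_n$ and this is exactly Lemma \ref{deltalemma} (ii). By couniversality of the product, $\eta;\delta = 1$ follows immediately.

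Next I would verify the associativity equation. Again by the universal property of $\prod_{n=0}^\infty A$, it suffices to check $\mathsf{T}(\delta);\delta;\pi_n = \mu;\delta;\pi_n$ for each $n \in \mathbb{N}$. Both sides reduce to statements about $\delta_n$: the left side becomes $\mathsf{T}(\delta);\delta_n$ and the right side becomes $\mu;\delta_n$, and their equality is precisely Lemma \ref{deltalemma} (iv). Thus $\mathsf{T}(\delta);\delta = \mu;\delta$, completing the verification that $(\prod_{n=0}^\infty A, \delta)$ is a $\mathsf{T}$-algebra.

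There is essentially no obstacle here once Lemma \ref{deltalemma} is in hand; the real work is hidden in establishing parts (ii) and (iv) of that lemma, which proceed by induction on $n$ following the same patterns used in the proof of Lemma \ref{omegalemma} (the induction for (ii) invokes the linear rule \textbf{[d.3]} together with (i), while the induction for (iv) uses naturality of $\mathsf{d}$, the chain rule \textbf{[d.4]}, and Lemma \ref{deltalemma} (iii), where the absence of binomial coefficients actually simplifies the bookkeeping relative to the $\omega^\nu$ case).
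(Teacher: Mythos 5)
Your proposal is correct and matches the paper's approach exactly: the paper omits this proof as routine, but its proof of the analogous Proposition \ref{omegatalg} is precisely ``follows from Lemma \ref{omegalemma} (ii) and (iv) and the universality of the product,'' which is what you do with Lemma \ref{deltalemma}. (Only a terminological nitpick: for the unit equation you appeal to the \emph{universal} property of the product, not its ``couniversality.'')
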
 

The induced multiplication and unit of $(\prod \limits^\infty_{n=0} A, \delta)$ are precisely the multiplication and unit of classical power series rings: 

\begin{lemma}\label{psmult} For a $\mathsf{T}$-algebra $(A, \nu)$, the following equalities hold: 
\begin{enumerate}[{\em (i)}]
\item $\mathsf{u}^\delta; \pi_n = \begin{cases} \mathsf{u}^\nu & \text{if } n=0 \\
0 & \text{o.w.} 
\end{cases}$
\item $\mathsf{m}^\delta; \pi_n = \sum\limits^{n}_{k=0} (\pi_k \otimes \pi_{n-k}); \mathsf{m}^\nu$
\end{enumerate}
\end{lemma}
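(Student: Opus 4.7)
The plan is to mirror the proof of Lemma \ref{hurwitzmult} essentially verbatim, since the only difference between the Hurwitz and power series cases is the replacement of the binomial coefficients $\binom{n}{k}$ by $1$, and none of the Hurwitz proof steps actually exploited specific properties of those coefficients beyond their role as scalars that could be distributed across sums and composition. Both parts reduce to unfolding the induced monoid structure of the $\mathsf{T}$-algebra $(\prod_{n=0}^\infty A, \delta)$ via (\ref{Talgmon}) and (\ref{Talgmonu}), then composing with $\pi_n$ to land at $\delta_n$, and finally invoking Lemma \ref{deltalemma}.

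For part (i), I would compute
\[ \mathsf{u}^\delta; \pi_n \;=\; \mathsf{u}; \delta; \pi_n \;=\; \mathsf{u}; \delta_n, \]
where the first equality is (\ref{Talgmonu}) and the second is the defining property of $\delta = \langle \delta_n \mid n \in \mathbb{N}\rangle$. The case split then follows immediately from Lemma \ref{deltalemma} (i).

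For part (ii), the analogous unfolding gives
\[ \mathsf{m}^\delta; \pi_n \;=\; (\eta \otimes \eta); \mathsf{m}; \delta; \pi_n \;=\; (\eta \otimes \eta); \mathsf{m}; \delta_n \]
by (\ref{Talgmon}). Applying Lemma \ref{deltalemma} (iii) rewrites this as
\[ (\eta \otimes \eta); \left( \sum\limits_{k=0}^n (\delta_k \otimes \delta_{n-k}); \mathsf{m}^\nu \right) \;=\; \sum\limits_{k=0}^n (\eta; \delta_k \otimes \eta; \delta_{n-k}); \mathsf{m}^\nu, \]
using the distribution of tensor/composition over the additive enrichment. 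Finally, Lemma \ref{deltalemma} (ii) replaces each $\eta; \delta_k$ by $\pi_k$, yielding the desired formula.

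There is really no obstacle here: the lemma is a direct consequence of the preceding Lemma \ref{deltalemma}, which is itself the power series analogue of Lemma \ref{omegalemma} and would be proved by the same induction-on-$n$ calculations used there. Indeed, the proof is short enough that one could simply write ``analogous to Lemma \ref{hurwitzmult}'' and be done, though spelling it out explicitly as above takes only a few lines.
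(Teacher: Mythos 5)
Your proposal is correct and matches the paper's intended argument exactly: the paper omits this proof precisely because it is the verbatim analogue of the proof of Lemma \ref{hurwitzmult}, unfolding $\mathsf{u}^\delta$ and $\mathsf{m}^\delta$ via (\ref{Talgmonu}) and (\ref{Talgmon}), composing with $\pi_n$ to reach $\delta_n$, and invoking Lemma \ref{deltalemma} (i)--(iii). Nothing is missing.
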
 
Notice again how $\mathsf{m}^\delta$ and $\mathsf{u}^\delta$ are similar to $\mathsf{m}^\omega$ and $\mathsf{u}^\omega$ (Lemma \ref{hurwitzmult}) without scalar multiplying by the binomial coefficients. 

Define the map $\mathsf{p}: \prod \limits^\infty_{n=0} A \to \prod \limits^\infty_{n=0} A$ as $\mathsf{p} := \langle (n+1) \cdot \pi_{n+1} \vert ~ n \in \mathbb{N} \rangle$, that is, the unique map which for each $n \in \mathbb{N}$ makes the following diagram commute:
\begin{equation}\label{}\begin{gathered} \xymatrixcolsep{5pc}\xymatrixrowsep{4pc}\xymatrix{\prod \limits^\infty_{n=0} A \ar[r]^-{\mathsf{p}} \ar[dr]_-{(n+1) \cdot \pi_{n+1}} & \prod \limits^\infty_{n=0} A \ar[d]^-{\pi_n} \\
& A
  } \end{gathered}\end{equation}
  
\begin{proposition} The triple $(\prod \limits^\infty_{n=0} A, \delta, \mathsf{p})$ is a $\mathsf{T}$-differential algebra. And therefore, $(\prod \limits^\infty_{n=0} A, \delta, \mathsf{p}^\diamondsuit)$ is a $\mathsf{H}$-coalgebra. 
\end{proposition}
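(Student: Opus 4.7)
The proposition combines two statements; the second follows immediately from the first by Theorem \ref{cofreethm} together with Lemma \ref{TalgHalg}, which sends any $\mathsf{T}$-differential algebra $(A, \nu, \mathsf{D})$ to the $\mathsf{H}$-coalgebra $(A, \nu, \mathsf{D}^\diamondsuit)$. So it suffices to establish that $(\prod \limits^\infty_{n=0} A, \delta, \mathsf{p})$ is a $\mathsf{T}$-differential algebra.

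Proposition \ref{deltatalg} has already supplied the $\mathsf{T}$-algebra structure, so the task reduces to verifying the $\mathsf{T}$-derivation identity
\[ \delta; \mathsf{p} = \mathsf{d}; (\delta \otimes \mathsf{p}); \mathsf{m}^\delta. \]
The plan is to invoke the universal property of the countable product and check this equation after composing with each projection $\pi_n$. The left-hand side reduces to $(n+1) \cdot \delta_{n+1}$ by the definition of $\mathsf{p}$. For the right-hand side, I would first apply Lemma \ref{psmult} (ii) to rewrite $\mathsf{m}^\delta; \pi_n$ as $\sum_{k=0}^n (\pi_k \otimes \pi_{n-k}); \mathsf{m}^\nu$, and then use $\delta; \pi_k = \delta_k$ together with $\mathsf{p}; \pi_{n-k} = (n-k+1) \cdot \pi_{n-k+1}$ to obtain
\[ \sum_{k=0}^n (n-k+1) \cdot \mathsf{d}; (\delta_k \otimes \pi_{n-k+1}); \mathsf{m}^\nu. \]

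The remaining step is to identify this last expression with $(n+1) \cdot \delta_{n+1}$. This mirrors the corresponding calculation in Proposition \ref{Hdiff}: there the inductive formula for $\omega^\nu_{n+1}$ was crafted so that the Hurwitz-derivation equation emerges directly from Lemma \ref{hurwitzmult} (ii), the binomial weights $\binom{n}{k}$ produced by the Hurwitz shift $\mathsf{h}; \pi_{n-k} = \pi_{n-k+1}$ matching exactly those appearing in the definition. For $\delta$ and $\mathsf{p}$ the Hurwitz shift is replaced by the power-series derivative $\mathsf{p}; \pi_n = (n+1) \cdot \pi_{n+1}$, which introduces the scaling factors $(n-k+1)$, while the product $\mathsf{m}^\delta$ no longer carries binomial coefficients. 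Establishing the combinatorial match between $(n+1) \delta_{n+1}$ and $\sum_{k=0}^n (n-k+1) \cdot \mathsf{d}; (\delta_k \otimes \pi_{n-k+1}); \mathsf{m}^\nu$ is where I expect the main obstacle to lie. I would attempt it by induction on $n$: the base case $n=0$ amounts to $\delta_1 = \mathsf{d}; (\delta_0 \otimes \pi_1); \mathsf{m}^\nu$, which is the definition; the inductive step should follow by expanding $\delta_{n+1}$ via its inductive formula, applying the Leibniz rule \textbf{[d.2]} for $\mathsf{d}$, and carrying out reindexings analogous to those in the proof of Lemma \ref{deltalemma}. Once the $\mathsf{T}$-derivation equation is verified on each $\pi_n$, the universal property of the product concludes the proof of the first claim, and the $\mathsf{H}$-coalgebra statement then follows at once from Lemma \ref{TalgHalg}.
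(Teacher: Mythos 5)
Your overall framing is the right one: the $\mathsf{H}$-coalgebra claim does reduce to the first claim via Lemma \ref{TalgHalg}, Proposition \ref{deltatalg} supplies the $\mathsf{T}$-algebra structure, and checking the derivation equation $\delta;\mathsf{p} = \mathsf{d};(\delta\otimes\mathsf{p});\mathsf{m}^\delta$ against each projection $\pi_n$ is how one would proceed. Your computation of the two sides is also correct: the left-hand side is $(n+1)\cdot\delta_{n+1}$ and, granting Lemma \ref{psmult} (ii), the right-hand side is $\sum_{k=0}^{n}(n-k+1)\cdot\mathsf{d};(\delta_k\otimes\pi_{n-k+1});\mathsf{m}^\nu$.

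However, the step you defer --- ``establishing the combinatorial match'' by induction, the Leibniz rule and reindexing --- is precisely where the argument breaks, and the proposed induction cannot close it. With the inductive definition $\delta_{n+1}=\sum_{k=0}^{n}\mathsf{d};(\delta_k\otimes\pi_{n-k+1});\mathsf{m}^\nu$, the left-hand side puts the uniform weight $n+1$ on every summand $\mathsf{d};(\delta_k\otimes\pi_{n-k+1});\mathsf{m}^\nu$, while the right-hand side puts the weight $n-k+1$ on the $k$-th summand; these coefficient systems disagree for every $k\geq 1$, and the summands satisfy no relations that absorb the discrepancy. You can see this concretely in the symmetric algebra example (Example \ref{symex}) over $R=\mathbb{Z}$: for two degree-one generators $f,g$ of $\mathsf{Sym}\left(\prod_{n=0}^{\infty}A\right)$ one computes directly from the definitions that $\delta_2(f\cdot g)=f_0g_2+f_2g_0+2f_1g_1$, so at $n=1$ the left-hand side evaluates on $f\cdot g$ to $2f_0g_2+2f_2g_0+4f_1g_1$ while the right-hand side gives $2f_0g_2+2f_2g_0+2f_1g_1$. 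The same computation shows that $\mathsf{m}^{\delta};\pi_2$ carries the coefficient $2$ on the cross term $f_1g_1$ rather than the coefficient $1$ required by Lemma \ref{psmult} (ii), so that lemma cannot be invoked as a black box either. The identity (and with it the proposition) only becomes provable after renormalizing the inductive step, e.g.\ setting $\delta_{n+1}:=\frac{1}{n+1}\sum_{k=0}^{n}(n-k+1)\cdot\mathsf{d};(\delta_k\otimes\pi_{n-k+1});\mathsf{m}^\nu$, which requires the enrichment in $\mathbb{Q}_{\geq 0}$-modules that is otherwise only assumed in Proposition \ref{powerH}. So the obstacle you flag is a genuine obstruction rather than a routine verification, and a complete proof must either add that hypothesis and adjust the definition of $\delta$ accordingly, or find another normalization.
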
 

The $\mathsf{T}$-derivation $\mathsf{p}$ captures the standard differentiation of power series (\ref{classicalpower}), where in particular $\mathsf{D}(x^{n+1})= (n+1)x^{n}$. 

As mentioned in Example \ref{cofreediff}, power series rings and Hurwitz series rings are closely related to one another \cite{keigher1997ring}. This relation still holds true between cofree $\mathsf{T}$-differential algebras and power series over $\mathsf{T}$-algebras.

\begin{proposition}\label{powerH} Consider a codifferential category with countable products and which admits non-negatives rationals $\mathbb{Q}_{\geq 0}$, that is, is enriched over $\mathbb{Q}_{\geq 0}$-modules. Then for every $\mathsf{T}$-algebra $(A, \nu)$, $(\prod \limits^\infty_{n=0} A, \delta, \mathsf{p})$ and $(\prod \limits^\infty_{n=0} A, \omega, \mathsf{h})$ are isomorphic as $\mathsf{T}$-differential algebras. \end{proposition}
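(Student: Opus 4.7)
The plan is to exploit the cofree universal property of $\mathsf{H}(A,\nu) = (\prod\limits^\infty_{n=0} A, \omega, \mathsf{h})$ from Theorem \ref{cofreethm}, rather than verifying that the naive scaling map is a $\mathsf{T}$-algebra morphism by a direct induction on the recursive definitions of $\omega_n$ and $\delta_n$ (a route which would require reconciling the binomial coefficients in the recursion for $\omega_{n+1}$ with their absence in the recursion for $\delta_{n+1}$).

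First I would observe that $\pi_0 \colon (\prod\limits^\infty_{n=0} A, \delta) \to (A,\nu)$ is a $\mathsf{T}$-algebra morphism, which is immediate from the definition $\delta_0 = \mathsf{T}(\pi_0);\nu$. Since $(\prod\limits^\infty_{n=0} A, \delta, \mathsf{p})$ is a $\mathsf{T}$-differential algebra by the preceding proposition, and $\mathsf{H}(A,\nu)$ is cofree over $(A,\nu)$, the cofree/forgetful adjunction yields a unique $\mathsf{T}$-differential algebra morphism $\Phi \colon (\prod\limits^\infty_{n=0} A, \delta, \mathsf{p}) \to (\prod\limits^\infty_{n=0} A, \omega, \mathsf{h})$ whose composite with the counit $\pi_0$ recovers $\pi_0$. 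Using $\pi_n = \mathsf{h}^n; \pi_0$ and the fact that $\Phi$ intertwines $\mathsf{p}$ with $\mathsf{h}$, I then compute $\Phi;\pi_n = \mathsf{p}^n;\Phi;\pi_0 = \mathsf{p}^n;\pi_0$, and an easy induction from $\mathsf{p};\pi_m = (m+1)\cdot \pi_{m+1}$ gives $\mathsf{p}^n;\pi_0 = n!\cdot \pi_n$. Hence $\Phi;\pi_n = n!\cdot \pi_n$.

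Next, I invoke the $\mathbb{Q}_{\geq 0}$-enrichment to define $\Psi \colon \prod\limits^\infty_{n=0} A \to \prod\limits^\infty_{n=0} A$ as the unique map with $\Psi;\pi_n = \tfrac{1}{n!}\cdot \pi_n$ for every $n$, and verify $\Phi;\Psi = 1 = \Psi;\Phi$ by a one-line computation against each $\pi_n$ using the universal property of the product. This shows $\Phi$ is an isomorphism in $\mathbb{X}$; a standard conjugation argument then gives that the inverse in $\mathbb{X}$ of a $\mathsf{T}$-algebra morphism is again a $\mathsf{T}$-algebra morphism, so $\Psi$ is a $\mathsf{T}$-algebra morphism $(\prod\limits^\infty_{n=0} A,\omega) \to (\prod\limits^\infty_{n=0} A,\delta)$. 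A direct projection-wise calculation (using $\mathsf{p};\pi_n = (n+1)\cdot \pi_{n+1}$ and $\mathsf{h};\pi_n = \pi_{n+1}$) shows $\Psi;\mathsf{p} = \mathsf{h};\Psi$, so $\Psi$ is a $\mathsf{T}$-differential algebra morphism and a two-sided inverse to $\Phi$.

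The main obstacle, and the reason for routing through the cofree universal property, is that the condition $\mathsf{T}(\Psi);\delta = \omega;\Psi$ is awkward to verify by brute induction on the recurrences, because the coefficients $\binom{n}{k}$ appearing in $\omega_{n+1}$ have no immediate counterpart in $\delta_{n+1}$, so the scaling by $1/n!$ does not visibly balance the two recursions term by term. Theorem \ref{cofreethm} packages all of this compatibility into the single check that $\pi_0$ is a $\mathsf{T}$-algebra morphism, after which the $\mathsf{T}$-differential algebra morphism $\Phi$ exists automatically, and its explicit form is forced to be multiplication by $n!$ in the $n$-th component by the intertwining property alone.
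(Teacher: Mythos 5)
Your proposal is correct and is essentially the paper's own argument: the map $\Phi$ you obtain from the cofree universal property applied to the $\mathsf{T}$-algebra morphism $\pi_0$ is exactly the paper's explicit $\psi := \mathsf{p}^\diamondsuit; \prod\limits^\infty_{m=0}\pi_0$, and both proofs then compute $\psi;\pi_n = \mathsf{p}^n;\pi_0 = n\oc\cdot\pi_n$ and invert it as $\langle \frac{1}{n\oc}\cdot\pi_n\rangle$ using the $\mathbb{Q}_{\geq 0}$-enrichment. The only cosmetic difference is that you invoke the adjunction abstractly where the paper writes the transpose down explicitly.
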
 
\begin{proof} Define the $\mathsf{T}$-differential morphism $\psi: (\prod \limits^\infty_{n=0} A, \delta, \mathsf{p}) \to (\prod \limits^\infty_{n=0} A, \omega, \mathsf{h})$ as follows: 
\begin{equation}\label{psidef}\begin{gathered} \psi := \xymatrixcolsep{5pc}\xymatrix{\prod \limits^\infty_{n=0} A \ar[r]^-{\mathsf{p}^\diamondsuit} & \prod \limits^\infty_{m=0} \prod \limits^\infty_{n=0} A \ar[r]^-{\prod \limits^\infty_{m=0} \pi_0} & \prod \limits^\infty_{n=0} A 
  } \end{gathered}\end{equation}
  which is a $\mathsf{T}$-differential morphisms by construction (Proposition \ref{Hdiff} and Lemma \ref{diamondlem}). Note that this map can be constructed in arbitrary codifferential categories with countable products. 
  
We first note that $\psi$ admits a nice alternative description. Indeed, for each $n \in \mathbb{N}$, we get: 
  \begin{align*}
\psi; \pi_n = \mathsf{p}^\diamondsuit; \prod \limits^\infty_{n=0} \pi_0; \pi_n = \mathsf{p}^\diamondsuit; \pi_n; \pi_0 = \mathsf{p}^n ; \pi_0 = n\oc \cdot \pi_n
\end{align*}
Then by the universal property of the product, $\psi= \langle n\oc \cdot \pi_{n} \rangle$. And since we are working in a codifferential category with positive rationals, one defines the inverse of $\psi$ as $\psi^{-1} := \langle \frac{1}{n\oc} \cdot \pi_{n} \vert~ n \in \mathbb{N} \rangle$. 

\end{proof} 

\section{Conclusion and Future Directions}\label{consec}

In this paper we introduced and studied the notion of $\mathsf{T}$-differential algebras which provide a generalization of differential algebras to the context of differential categories. As such, we have been able to add the theory of differential algebras nicely into the theory of differential categories. There are many interesting potential directions for future work with $\mathsf{T}$-differential algebras. One could study $\mathsf{T}$-differential algebras in more exotic differential categories such as the category of convenient vector spaces \cite{blute2010convenient} or other models of differential linear logic. Or one could study linear differential equations for $\mathsf{T}$-differential algebras and find solutions to linear combinations of $\mathsf{D}^n$. One could also ask whether it is possible to generalize the notion of weighted differential algebras to the context of differential categories. The first question to solve when generalizing weighted differential algebras is whether these should be defined for regular differential categories or a weighted version of a differential category. One of the difficulties with defining the latter is that there is not a general weighted chain rule formula. On the other hand, there has recently been much effort put into the study of the integration and the fundamental theorem of calculus in the context differential categories \cite{bagnol2016shuffle, cockett_lemay_2018, delaney2018generalized, ehrhard2017introduction}. Hopefully one should be able to develop $\mathsf{T}$-Rota-Baxter algebras for differential categories and construct free $\mathsf{T}$-Rota-Baxter algebras. One could then study $\mathsf{T}$-differential algebras with compatible $\mathsf{T}$-Rota-Baxter algebra structure and the fundamental theorems of calculus. 

\bibliographystyle{spmpsci}      % mathematics and physical sciences
\bibliography{diffalg}   % name your BibTeX data base

\end{document}